\documentclass[11pt]{amsart}
\usepackage[T1]{fontenc}
\usepackage{lmodern}

\usepackage{amsmath}
\DeclareMathOperator{\diag}{diag}
\usepackage{amssymb}
\usepackage{amsthm}
\usepackage{mathtools}
\usepackage{mathrsfs}

\usepackage{graphicx}
\usepackage{booktabs}
\usepackage{enumitem}
\usepackage{float}
\usepackage{subcaption}
\usepackage{tikz}
\usetikzlibrary{calc}
\usepackage{tabularx}
\usepackage{array} 
\newcolumntype{L}[1]{>{\raggedright\arraybackslash}p{#1}}
\usepackage{longtable}
\usepackage{booktabs}  
\makeatletter
\renewcommand\subsubsection{\@startsection{subsubsection}{3}{\z@}%
                                     {-3.25ex\@plus -1ex \@minus -.2ex}%
                                     {1.5ex \@plus .2ex}%
                                     {\normalfont\normalsize\bfseries}}
\makeatother

\usepackage[margin=1in]{geometry}

\usepackage{xcolor}
\definecolor{mycyanblue}{HTML}{0080AC}
\usepackage{hyperref}
\hypersetup{
    colorlinks=true,
    allcolors=mycyanblue
}

\usepackage[nameinlink,capitalise]{cleveref}

\newtheorem{theorem}{Theorem}[section]
\newtheorem{lemma}[theorem]{Lemma}
\newtheorem{proposition}[theorem]{Proposition}
\newtheorem{corollary}[theorem]{Corollary}

\theoremstyle{definition}

\newtheorem{problem}[theorem]{Problem}

\theoremstyle{remark}
\newtheorem{remark}[theorem]{Remark}

\newcommand{\one}{\mathbf{1}}

\DeclareMathOperator{\Spec}{Spec}

\title{A Sharp Matching-Number Threshold for Spectral-Walk Determination of Trees}
\author{Chaochao Zhu}
\thanks{College of Finance and Mathematics, West Anhui University, Lu'an, 237012, China. Email: zccjsbz@amss.ac.cn}

\begin{document}

\maketitle

\begin{abstract}
The spectral characterization of graphs is a central problem in spectral graph theory. In this paper we study when a tree is determined, among trees, by its generalized spectrum. We use the equivalent formulation given by the adjacency spectrum together with the total-walk sequence $W_k(G)=\mathbf 1^{\mathsf T}A(G)^k\mathbf 1$. We determine the exact matching-number threshold for this tree-level reconstruction problem. If \(T\) and \(T'\) are trees with matching number at most 4 and have the same adjacency spectrum and the same total-walk sequence, then \(T\cong T'\). Moreover, in this range it is enough to require equality of \(W_k\) for \(3\le k\le8\). The bound is sharp: for every positive integer \(m\) we construct a pair of non-isomorphic trees with matching number 5 having the same adjacency spectrum and identical total-walk sequences. The proof of the positive result is based on a finite-core reduction and an algebraic reconstruction of the possible pendant attachments.
\end{abstract}

\textbf{Keywords:} {trees; graph spectra; generalized spectrum; matching number; total-walk sequence; spectral reconstruction}

% =========================================================
\section{Introduction}\label{sec:introduction}
% =========================================================

Determining the structure of a graph from its spectral invariants is a fundamental and challenging problem in spectral graph theory; see, for example, the surveys of van Dam and Haemers \cite{vanDamHaemers2003,vanDamHaemers2009} and the standard references \cite{Cvetkovic,GodsilRoyle,BrouwerHaemers2011}. A graph $G$ is said to be \emph{determined by its spectrum} (DS) if every graph with the same adjacency spectrum as $G$ is necessarily isomorphic to $G$. Non-isomorphic cospectral graphs are abundant, and a number of
systematic constructions of cospectral graphs are known \cite{GodsilMcKay1982}. In particular, Schwenk showed that almost all trees have a non-isomorphic cospectral mate \cite{Schwenk1973}. This line of research, rooted in chemistry and famously analogized to the question ``Can one hear the shape of a drum?'' \cite{Kac}, has spurred extensive research over the past decades. A related problem is determination by the generalized spectrum (DGS), where one considers the adjacency spectra of both a graph and its
complement.  An influential arithmetic approach to DGS was developed by Wang and Xu \cite{WangXu2006} through the walk matrix, and was subsequently refined in several directions \cite{Wang2017,QiuWangWangZhang2023,WangWangZhu2023}. The generalized spectral characterization of trees has also received specific attention; see, for instance, the recent work of Yang and Wang \cite{YangWang2024}. It is clear that every DS graph is also DGS, but the converse is not true, leading to the natural question of characterizing graphs that are DGS but not DS \cite{Lin2026}. 

In this paper, we study a rigidity problem for trees that is in fact equivalent to determination by the generalized spectrum, but formulated in a more convenient way. For a graph $G$ with adjacency matrix \(A(G)\), the total number of walks of length $k$ is given by $W_k(G) = \one^{\mathsf T} A(G)^k \one$, where $\one$ is the all-one vector. The sequence $(W_0(G), W_1(G), W_2(G), \ldots)$ is known as the total-walk sequence, and it is encoded by the generating function $F_G(x) = \sum_{k\ge 0} W_k(G) x^k = \one^{\mathsf T}(I - xA(G))^{-1}\one$.

It is well known (and will be used implicitly) that for any graph, the pair consisting of the adjacency spectrum $\Spec(G)$ and the total-walk sequence $(W_k(G))_{k\ge 0}$ is equivalent to the generalized spectrum $(\Spec(G), \Spec(\overline{G}))$. Indeed, the spectral decomposition of $A(G)$ determines the projections of $\one$ onto each eigenspace, which in turn determine the characteristic polynomial of the complement. Thus the invariant we consider is precisely the generalized spectrum, but we choose to work with this equivalent representation because it lends itself to algebraic manipulations with the generating function $F_G(x)$, which is particularly tractable for trees.

Having established the equivalence between the generalized spectrum and the total-walk sequence, we are now in a position to state the precise problem investigated in this paper.

\begin{problem}
If two trees are cospectral and have the same total-walk sequence, must they be isomorphic?
\end{problem}

The significance of the problem is twofold. First, the spectral--walk data considered here are equivalent to the generalized spectrum, so the question is a generalized spectral reconstruction problem for trees. Second, the matching number provides a natural measure of structural complexity for a tree.  Our results show that, with respect
to this parameter, generalized spectral rigidity exhibits a sharp threshold: it holds for all trees with matching number at most 4, but already fails at matching number 5. This threshold is not merely a limitation of our method. At matching number 5 we construct an infinite family of non-isomorphic pairs with identical spectral--walk data.  Thus the transition from \(\nu=4\) to \(\nu=5\) marks a genuine change in the reconstruction behaviour of trees.

\begin{theorem}\label{thm:main-rigidity}
Let \(T\) and \(T'\) be trees with $\nu(T)\leq4$, $\nu(T')\leq4$. Suppose that $\operatorname{Spec}(T)=\operatorname{Spec}(T')$, and $\mathbf1^{\mathsf T}A(T)^k\mathbf1=\mathbf1^{\mathsf T}A(T')^k\mathbf1$, $(k\geq0)$. Then $T\cong T'$.
\end{theorem}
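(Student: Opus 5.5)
We will exploit the fact that a tree with matching number $\nu\le 4$ has a bounded ``core'' carrying all structural complexity, with the rest of the tree consisting of pendant vertices. First we record what the data determine. The spectrum fixes $n$ and the number of edges. For a forest the characteristic polynomial is the matching polynomial, so it fixes every matching count $m_j(T)$, in particular $\nu(T)=\nu(T')$. The walk counts fix $W_2=\sum_v d_v^2$, $W_3=2\sum_{uv\in E}d_ud_v$ and so on; $W_k$ for small $k$ are polynomial degree-type statistics. The count of closed walks (spectral moments) gives $\sum_v d_v^2$ again and the number of paths of length $2$, $3$, etc. Together, $W_k$ and the spectral moments for $k\le 8$ yield a finite family of subgraph/degree statistics that we will use.

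\textbf{Step 1 (finite-core reduction).} By König's theorem for bipartite graphs, a tree with $\nu(T)=\nu$ has a vertex cover $C$ of size $\nu$. Every vertex outside $C$ has all its neighbours in $C$. We then delete pendant vertices outside $C$ and contract duplications: vertices outside $C$ with the same neighbourhood in $C$ (in a tree, a non-leaf outside $C$ has a unique neighbourhood, and at most $\nu-1$ such non-leaves exist, since they together with $C$ form a tree). Thus $T$ is obtained from a \emph{core} tree $K$ on at most $2\nu-1\le 7$ vertices by attaching $a_c\ge 0$ pendant leaves at each $c\in C$. The constraint is that $\nu$ of the resulting tree equals $\nu$, which forces $a_c\ge1$ for certain cover vertices. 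Up to isomorphism there are finitely many pairs $(K,C)$ with $|C|\le 4$. Hence $T$ is encoded by a finite ``shape'' and an integer vector $(a_c)_{c\in C}\in\mathbb Z_{\ge0}^{\nu}$.

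\textbf{Step 2 (algebraic reconstruction).} For each shape, the characteristic polynomial (via the matching polynomial, computed by the pendant-leaf recursion $\phi(T)=x\phi(T-\ell)-\phi(T-\ell-v)$) is an explicit polynomial in $x$ whose coefficients are symmetric-type polynomials in the $a_c$. Likewise $W_k$ for $3\le k\le 8$ are explicit polynomials in the $a_c$: $W_k$ counts walks, and each leaf at $c$ contributes through $d_c=\deg_K(c)+a_c$. We first recover the shape. The matching numbers $m_j$ with $j$ near $\nu$ distinguish the cores, because $m_\nu$ factors as a sum over maximum matchings of products of $(a_c+\text{const})$. We then recover the multiset of $a_c$ together with their positions. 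Power sums $\sum_c a_c^r$ come from $\sum d_v^r$-type terms. The adjacency-weighted sums $\sum_{cc'} a_ca_{c'}$ and their higher analogues come from $W_3,\dots,W_8$. With $\nu\le4$, these mixed moments suffice to pin down the weighted labelling of the small core up to automorphism.

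\textbf{Main obstacle.} The hard step is this last injectivity argument: showing that for every pair of shapes (there are dozens when $\nu=4$), the polynomial map from $(a_c)$ to (spectrum, $W_3,\dots,W_8$) separates non-isomorphic configurations, including across different shapes. We would first reduce by the leading-order behaviour in large $a_c$ to match shapes and sorted degree sequences. We would then treat the residual bounded cases by finite Diophantine analysis, possibly computer-assisted, checking that equal values force an automorphism of $(K,C)$. The sharpness at $\nu=5$ suggests that exactly at five cover vertices the mixed moments acquire a nontrivial symmetric collision, so the argument must genuinely use $|C|\le4$ at this step.
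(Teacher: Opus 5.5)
Your Step 1 is fine; it is essentially the paper's core reduction. You should choose the König cover free of pendant vertices, so that $K$ is the intrinsic core $R(T)$; for $\nu=4$ this gives 10 cores and 23 core--support families.

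\textbf{Step 2 is the gap.} It contains the whole content of the theorem but is asserted rather than proved, and its one concrete claim is false. The matching numbers do not distinguish the cores, already for $\nu=2$. Take the double star $D(3,3)$ ($K_2$ with three leaves at each end) and $P(1,4)$ ($P_3$ with one and four leaves at its two ends). Both have $n=8$ and $\Delta_T(x)=1-7x^2+9x^4$, so they are cospectral with non-isomorphic cores. Only the walk data separate them ($W_3=80$ versus $72$). Shape recognition must therefore come from the coefficients of $N_T$. This is why the paper uses $\eta_3-\eta_2$ for $\nu=2$. For $\nu=4$ it uses the split $\eta_8=0$ versus $\eta_8>0$, the ratio $-\eta_7/(2m_4)$, the quantities $K$ and $Q$, a sign sieve on linear forms in $(n,m_2,m_3,m_4,\eta_3,\dots,\eta_8)$, and several separate lemmas. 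Likewise, ``these mixed moments suffice to pin down the weighted labelling'' is exactly the injectivity statement to be proved. The paper proves it family by family:
\begin{itemize}
\item symmetric-function root recovery;
\item fixing the pairing of two unordered pairs through $(ac+bd)-(ad+bc)=(a-b)(c-d)$;
\item eliminating spurious roots, e.g.\ in $\mathscr F_{6,234}$ and the combined $R_7$ row.
\end{itemize}

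\textbf{Your route for the cross-family step would not close the argument.} Matching leading-order behaviour for large $a_c$ gives no reason for the residual set to be bounded. Some parameters may be large while others stay small, and the dominant invariants can agree on positive-dimensional loci. After equating them you face infinite families of candidate collisions, and the final nonvanishing statements genuinely use integrality, which no asymptotic or real-variable comparison detects. Two examples from the paper:
\begin{itemize}
\item Separating $\mathscr F_{5,2345}$ from $\mathscr F_{8,1345}$ reduces to $P(t,h)\neq0$ for all positive integers $t,h$. Here $P(3,h)$ is positive at $h=2$ and negative at $h=3$, so integrality of $h$ is essential.
\item Separating $\mathscr F_{5,2345}$ from $\mathscr F_{8,1234}$ requires parametrizing the integer solutions of $1/a=1/d+1/\delta$ by $(k,\rho,\sigma)$. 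One must then prove that the explicit polynomials $D(k,\rho,\sigma)$ and $H(t,u)$ never vanish on unbounded integer ranges. Again $D(2,2,1)=-44$ while $D(3,2,1)=336$.
\end{itemize}

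What is missing is an argument uniform over all integer parameter values, both within every family and for every pair of families. The paper gets it from the fixed-core formula $F_{R(\mathbf a)}=|\mathbf a|+u_{\mathbf a}^{\mathsf T}M_{R,\mathbf a}^{-1}u_{\mathbf a}$, which makes every invariant an explicit integer polynomial in the pendant parameters. It then uses positivity certificates after substituting $a_i=1+x_i$, and Diophantine arguments for the hard pairs. A bounded computer search cannot replace this.
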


Although the problem is formulated in terms of the entire total-walk sequence, the positive result is in fact finite. 

\begin{corollary}\label{cor:finite-main}
In Theorem~\ref{thm:main-rigidity}, the condition $W_k(T)=W_k(T')$, $(k\geq0)$ may be replaced by $W_k(T)=W_k(T')$, $(3\leq k\leq8)$.
\end{corollary}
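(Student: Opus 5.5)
The plan is to revisit the proof of Theorem~\ref{thm:main-rigidity} and record exactly which walk counts it consumes. First, $W_0$, $W_1$ and $W_2$ are already determined by the spectrum. Indeed, $W_0=n$ is the number of eigenvalues. For a tree, $W_1=2|E|=2(n-1)$. Finally, $W_2=\sum_v d(v)^2$, and cospectrality does not give this directly, so it must be handled separately.

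For $W_2$ I would argue as follows. The spectrum fixes $n$, the number of edges $\operatorname{tr}A^2/2$, and the coefficients of the characteristic polynomial. For a forest these coefficients are the matching numbers $m_k(T)$. Hence the spectrum determines $m_2(T)=\binom{n-1}{2}-\sum_v\binom{d(v)}{2}$, which gives $\sum_v d(v)^2$ and therefore $W_2$. So the hypotheses $W_0,W_1,W_2$ are redundant given $\operatorname{Spec}(T)=\operatorname{Spec}(T')$.

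The main step is to show that, for trees with $\nu\le 4$ sharing a spectrum, the whole sequence $(W_k)_{k\ge0}$ is determined by $W_0,\dots,W_8$. I would use the finite-core reduction underlying Theorem~\ref{thm:main-rigidity}. A tree with $\nu\le4$ consists of a bounded core, with at most $2\nu$ vertices in a maximum matching, together with pendant stars attached to core vertices. The spectrum already fixes the multiplicity of the eigenvalue $0$ and the nonzero eigenvalues. There are at most $2\nu\le 8$ nonzero eigenvalues, occurring in pairs $\pm\lambda_1,\dots,\pm\lambda_4$. Write $c_j=\|P_j\mathbf 1\|^2$, where $P_j$ is the eigenprojection, so that $W_k=\sum_j c_j\mu_j^k$. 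Since the $\mu_j$ are fixed, the sequence is fully determined by the weights $c_j$, which occupy at most $9$ distinct eigenvalue classes including $0$.

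The unknowns are the weight on $0$ and the weights $c_{\pm\lambda_i}$, and they satisfy a Vandermonde system with known nodes. By bipartiteness, the odd and even walk counts separate the combinations $c_{\lambda_i}+c_{-\lambda_i}$ and $c_{\lambda_i}-c_{-\lambda_i}$. There are at most four distinct $\lambda_i^2$. The even equations $k=2,4,6,8$ give four equations in these $\lambda^2$-nodes, and the weight on $0$ is then recovered from $W_0$. The odd equations $k=1,3,5,7$ give four equations for the differences, which are invertible because the nodes $\lambda_i^2$ are distinct and nonzero. Thus $W_0,\dots,W_8$ determine every $c_j$ and hence every $W_k$. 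Given the previous paragraph, only $W_3,\dots,W_8$ are needed beyond the spectrum, and applying Theorem~\ref{thm:main-rigidity} finishes the proof.

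The step I expect to be the main obstacle is the odd-index count. $W_1$ must be available to close the odd system of four equations, and here it is supplied by the tree structure rather than by hypothesis. I also need to check the degenerate cases: coincident $\lambda_i^2$ reduce the number of unknowns, and a zero eigenvalue contributes nothing to $W_k$ for $k\ge1$. Both cases only make the system smaller, so invertibility is preserved.
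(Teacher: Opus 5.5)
Your proposal is correct and follows essentially the paper's route: both arguments observe that $W_1=2(n-1)$ and $W_2=n(n-1)-2m_2(T)$ are spectral invariants for trees, and that $\operatorname{rank}A(T)=2\nu(T)\le 8$ forces the walk sequence to be determined by $W_1,\dots,W_8$ once the spectrum is fixed. Your explicit Vandermonde inversion, split into even and odd parts over the at most four distinct nonzero nodes $\lambda_i^2$, is a concrete form of the paper's Cayley--Hamilton recurrence on the nonzero spectral subspace; the finite-core reduction you mention is never actually used and is not needed.
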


\begin{proof}
For cospectral trees, \(W_1\) and \(W_2\) are already determined by the adjacency spectrum. Indeed, \(W_1(T)=2(n-1), W_2(T)=n(n-1)-2m_2(T),\) and both \(n\) and \(m_2(T)\) are spectral invariants of a tree. Moreover, every forest \(F\) satisfies \(\operatorname{rank}A(F)=2\nu(F).\) Hence, when \(\nu(T)\le4\), the common adjacency rank is at most 8. The Cayley--Hamilton recurrence on the nonzero spectral subspace then shows that equality of \(W_1,\ldots,W_8\) implies equality of the entire total-walk sequence.
\end{proof}

Since cospectral trees have the same matching number, as recalled in the preliminaries below, either of the conditions \(\nu(T)\le 4\) and \(\nu(T')\le 4\) implies the other under the cospectrality hypothesis. Thus only one matching-number assumption is logically necessary in Theorem~\ref{thm:main-rigidity}; we retain the symmetric formulation for clarity.

The following theorem demonstrates that the bound $\nu\le 4$ in Theorem~\ref{thm:main-rigidity} is best possible.

\begin{theorem}\label{thm:sharpness}
For every integer $m\ge 1$, there exist two non-isomorphic trees $T_m$ and $T_m'$ such that $\nu(T_m)=\nu(T_m')=5$, and they share the same spectrum and the same total-walk sequence (hence the same generalized spectrum).
\end{theorem}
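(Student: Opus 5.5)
Our plan is to construct $T_m$ and $T_m'$ explicitly from a fixed finite \emph{core} with matching number $5$, grafting $m$ pendant vertices (or pendant stars) onto it. Pendant leaves leave the matching number unchanged as long as they attach to vertices already covered by a maximum matching in a suitable sense, so the parameter $m$ produces an infinite family without raising $\nu$ above $5$. Concretely, we take a core tree $C$ with two distinguished vertices $u,v$ that are \emph{cospectral and walk-equivalent} in a strong sense. We ask that $C-u$ and $C-v$ be cospectral, and that the walk generating functions satisfy $\one^{\mathsf T}(I-xA)^{-1}e_u=\one^{\mathsf T}(I-xA)^{-1}e_v$ and $e_u^{\mathsf T}(I-xA)^{-1}e_u=e_v^{\mathsf T}(I-xA)^{-1}e_v$. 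We then let $T_m$ be $C$ with a pendant star $K_{1,m}$ glued at $u$ by an edge from its centre, and $T_m'$ the same with the star glued at $v$.

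The verification will use the standard rooted-coalescence formulas. For the characteristic polynomial we use $\phi(G\cdot_{u}H)=\phi(G)\phi(H-w)+\phi(G-u)\phi(H)-\ldots$; for an edge-join it takes the form $\phi=\phi(C)\phi(S)-\phi(C-u)\phi(S-w)$. From this, cospectrality of $T_m$ and $T_m'$ follows from $\phi(C-u)=\phi(C-v)$, since the star $S$ and its root $w$ are identical in the two trees. For walks we use the walk generating function of a graph obtained by joining two rooted graphs with an edge. Via the Schur complement, $F$ of the union decomposes as
\[
F_C+F_S+\frac{2x\,a_C(x)\,a_S(x)+x^2\bigl(a_C(x)^2 h_S(x)+a_S(x)^2 h_C(x)\bigr)}{1-x^2h_C(x)h_S(x)},
\]
where $a_C=\one^{\mathsf T}(I-xA(C))^{-1}e_u$ and $h_C=e_u^{\mathsf T}(I-xA(C))^{-1}e_u$, with analogous $a_S,h_S$. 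Hence equality of $(a,h)$ at $u$ and $v$ forces $F_{T_m}=F_{T_m'}$ for every $m$ at once. Non-isomorphism is checked by degree sequences or local structure: we choose $u,v$ with different degrees, or otherwise inequivalent under any automorphism, so that the images differ, for instance in the multiset of neighbour degrees of the high-degree vertex. Finally, $\nu=5$ follows because $\nu(C)=4$ and the extra star adds exactly one matching edge, or alternatively $\nu(C)=5$ with $u,v$ chosen so that the star adds nothing. We will settle this by computing $\operatorname{rank}A=2\nu$ from $\phi$ directly.

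The main obstacle is finding the core $C$ with a pair $u,v$ that is simultaneously cospectral ($\phi(C-u)=\phi(C-v)$, which is Schwenk's notion) and walk-equivalent ($a$ and $h$ agree), with all of this at small matching number. Here $h$ agreeing is equivalent to cospectrality of the vertex deletions, since $h_u=\phi(C-u)/(x^{-1}\text{-scaled }\phi(C))$. So the real additional constraint is $a_u=a_v$, i.e.\ $\one$ has equal projections against $e_u$ and $e_v$ on each eigenspace. Our first attempt will be a small computer-free search among trees of the form "spider plus a few pendant paths" with $\nu(C)\le5$. The natural candidate is Schwenk's classic pair of cospectral vertices in a 9-vertex tree, augmented by a symmetric gadget so that the walk counts from $u$ and $v$ also match. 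If no exact agreement occurs, we will instead use two asymmetric rooted branches at $u$ and $v$ whose rational functions $a,h$ coincide, and verify this identity by reducing to finitely many coefficients via the rank bound, exactly as in Corollary~\ref{cor:finite-main}.
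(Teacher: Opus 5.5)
Your gluing principle is sound. For a bridge $uw$, the rank-two Woodbury computation gives exactly the formula you display. The analogous Schur-complement computation shows the same for coalescing a common rooted graph at $u$ or at $v$; for $m$ leaves at $u$ this is Proposition~\ref{prop:fixed-core-compression} with a single nonzero multiplicity. Either way, the glued trees have equal $\Delta$ and $F$ whenever $h_u=h_v$ and $a_u=a_v$.

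\textbf{The gap.} The theorem is an existence statement, so its entire content is a concrete witness, and your proposal never produces one. The core $C$ and the vertices $u,v$ are deferred to a search that you yourself call the main obstacle. As a result, nothing is established about cospectrality, walks, $\nu=5$, or non-isomorphism for any specific pair.

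Your suggested starting point is also unconvincing. The Schur complement at $u$ gives $F_C=F_{C-u}+a_u^2/h_u$. So once $h_u=h_v$, the condition $a_u=a_v$ is equivalent to $F_{C-u}=F_{C-v}$. In other words, $C-u$ and $C-v$ must already have the same spectral-walk data, and merely cospectral (Schwenk-type) vertices give no reason for this.

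The matching-number bookkeeping also fails in one branch. If the centre of $K_{1,m}$ is joined to $u$ by an edge, $\nu$ always increases by exactly one: extend a maximum matching of $C$ by a star edge, and a minimum vertex cover of $C$ by the centre. So ``$\nu(C)=5$ and the star adds nothing'' cannot happen. You would need a core with $\nu(C)=4$ carrying such a pair, which you have not exhibited.

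\textbf{How your plan can be completed.} The same identity helps. If $C-u\cong C-v$, then $h_u=h_v$ and $F_{C-u}=F_{C-v}$. Hence $a_u^2=a_v^2$, and so $a_u=a_v$, since both series have constant term $1$.

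The paper's family is exactly of this kind, except that the $m$ leaves are attached directly at $u$ rather than through a new star centre:
\begin{itemize}
\item Reflecting the path gives $T_m'\cong P_7(2,m,0,1,0,0,1)$.
\item So $T_m$ and $T_m'$ arise from $C_0=P_7(2,0,0,1,0,0,1)$ by attaching $m$ leaves at $v_5$ and at $v_2$, respectively.
\item One checks $C_0-v_2\cong C_0-v_5\cong P_3\cup S(1,1,4)$.
\item No automorphism of $C_0$ maps $v_2$ to $v_5$: the neighbour $v_1$ of $v_2$ carries two leaves, and no neighbour of $v_5$ does.
\item Here $\nu(C_0)=5$ already, and the new leaves do not raise it.
\end{itemize}

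The paper does not use this viewpoint. It computes $\Delta_m$ and $N_m$ from the fixed-core compression and proves $\nu=5$ with an explicit matching and vertex cover. It then distinguishes $T_m$ from $T_m'$ for every $m\ge 1$ by the multiset of distances between branching vertices. Your route would explain conceptually why the collision persists for all $m$, but it still needs these concrete checks. In particular, it needs a non-isomorphism argument valid for small $m$, where the vertex receiving the new leaves is not singled out by its degree.
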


The sharpness family originates from a computational search that identified the first collision for $n=17$ and matching number 5. Both trees share the same core path $P_7$ with different pendant attachments. The original pair is given by $P_7(2,0,0,1,6,0,1)$ and $P_7(1,0,0,1,0,6,2)$. For clarity, we depict the two trees of this initial collision in Figure~\ref{fig:sharpness_pair} below.

\begin{figure}[H]
    \centering
    \begin{subfigure}[b]{0.48\textwidth}
        \centering
        \begin{tikzpicture}[scale=0.7, every node/.style={inner sep=0pt}]
         
            \coordinate (p1) at (0,0);
            \coordinate (p2) at (1.2,0);
            \coordinate (p3) at (2.4,0);
            \coordinate (p4) at (3.6,0);
            \coordinate (p5) at (4.8,0);
            \coordinate (p6) at (6.0,0);
            \coordinate (p7) at (7.2,0);
            
            \draw[thick] (p1)--(p2)--(p3)--(p4)--(p5)--(p6)--(p7);
            \foreach \p in {1,...,7} {
                \filldraw[black] (p\p) circle (3pt);
            }
            
            \newcommand{\drawLeaves}[2]{
                \ifnum#2=1
                   
                    \draw[gray, thin] (#1) -- ++(90:0.8);
                    \filldraw[black] ($(#1)+(90:0.8)$) circle (1.8pt);
                \else
                    
                    \pgfmathsetmacro{\angleStart}{-60}
                    \pgfmathsetmacro{\angleEnd}{60}
                    \pgfmathsetmacro{\step}{(\angleEnd-\angleStart)/(#2-1)}
                    \foreach \i in {1,...,#2} {
                        \pgfmathsetmacro{\angle}{\angleStart + (\i-1)*\step}
                        \draw[gray, thin] (#1) -- ++(\angle:0.8);
                        \filldraw[black] ($(#1)+(\angle:0.8)$) circle (1.8pt);
                    }
                \fi
            }
                      
            \drawLeaves{p1}{2}
            \drawLeaves{p4}{1}
            \drawLeaves{p5}{6}
            \drawLeaves{p7}{1}
        \end{tikzpicture}
        \caption{$P_7(2,0,0,1,6,0,1)$}
        \label{fig:tree1}
    \end{subfigure}
    \hfill
    \begin{subfigure}[b]{0.48\textwidth}
        \centering
        \begin{tikzpicture}[scale=0.7, every node/.style={inner sep=0pt}]
            \coordinate (p1) at (0,0);
            \coordinate (p2) at (1.2,0);
            \coordinate (p3) at (2.4,0);
            \coordinate (p4) at (3.6,0);
            \coordinate (p5) at (4.8,0);
            \coordinate (p6) at (6.0,0);
            \coordinate (p7) at (7.2,0);
            
            \draw[thick] (p1)--(p2)--(p3)--(p4)--(p5)--(p6)--(p7);
            \foreach \p in {1,...,7} {
                \filldraw[black] (p\p) circle (3pt);
            }
            
            \newcommand{\drawLeaves}[2]{
                \ifnum#2=1
                    \draw[gray, thin] (#1) -- ++(90:0.8);
                    \filldraw[black] ($(#1)+(90:0.8)$) circle (1.8pt);
                \else
                    \pgfmathsetmacro{\angleStart}{-60}
                    \pgfmathsetmacro{\angleEnd}{60}
                    \pgfmathsetmacro{\step}{(\angleEnd-\angleStart)/(#2-1)}
                    \foreach \i in {1,...,#2} {
                        \pgfmathsetmacro{\angle}{\angleStart + (\i-1)*\step}
                        \draw[gray, thin] (#1) -- ++(\angle:0.8);
                        \filldraw[black] ($(#1)+(\angle:0.8)$) circle (1.8pt);
                    }
                \fi
            }
            
            \drawLeaves{p1}{1}
            \drawLeaves{p4}{1}
            \drawLeaves{p6}{6}
            \drawLeaves{p7}{2}
        \end{tikzpicture}
        \caption{$P_7(1,0,0,1,0,6,2)$}
        \label{fig:tree2}
    \end{subfigure}
    \caption{The two non-isomorphic trees forming the initial collision.}
    \label{fig:sharpness_pair}
\end{figure}
By replacing the parameter $6$ with an arbitrary integer $m$, we obtain an infinite family.

The proof of the positive result is based on a finite-core reduction. Removing all pendant vertices from a tree of bounded matching number leaves a core whose order is bounded in terms of the matching number. For matching number 4, this reduces the problem to finitely many canonical cores and pendant-support types. The adjacency spectrum
determines the matching data of the tree, while the total-walk generating function provides additional polynomial invariants that recover the pendant multiplicities and separate the remaining canonical families. The resulting proofs are entirely algebraic; computation served only to motivate the problem and to discover the sharpness family, and is not used in the proofs of the main results.

The paper is organized as follows. Section~\ref{sec:preliminaries} contains preliminary definitions and basic facts about total-walk sequences and matching polynomials of trees. In Section~\ref{sec:core-reduction}, we develop a core reduction technique that reduces the study of trees with bounded matching number to a finite analysis of their cores. Section~\ref{sec:matching-at-most-four} presents the proof of the rigidity theorem for trees with matching number at most 4. Section~\ref{sec:sharpness-section} provides the explicit construction for the infinite sharpness family with matching number 5. Finally, Appendix~A collects the explicit recovery certificates
underlying Proposition~\ref{prop:internal-reconstruction}.

% =========================================================
\section{Preliminaries}\label{sec:preliminaries}
% =========================================================

Throughout the paper, all graphs are finite, simple, and undirected. For a graph \(G\), let \(V(G)\) and \(E(G)\) denote its vertex and edge sets, respectively, and let \(A(G)\) denote its adjacency matrix. We write \(\operatorname{Spec}(G)\) for the adjacency spectrum of \(G\). For a graph \(G\), let \(m_j(G)\) denote the number of matchings of size \(j\), and let \(\nu(G)\) denote its matching number. For background on matching polynomials and their connections with walks, we refer to Godsil \cite{Godsil1981}. A vertex cover of \(G\) is a set meeting every edge of \(G\); its minimum cardinality is denoted by \(\tau(G)\).

We shall use the following classical results.

\begin{lemma}[Kőnig–Egerváry{\cite{Konig1931,Egervary1931,LovaszPlummer1986}}]\label{lem:konig}
For every bipartite graph \(G\), $\nu(G)=\tau(G)$.
\end{lemma}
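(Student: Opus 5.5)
The plan is the classical alternating-path proof of Kőnig–Egerváry; I would not use any later machinery from the paper. Write $G$ with bipartition $(X,Y)$.

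The inequality $\nu(G)\le\tau(G)$ holds in every graph. A vertex cover must contain at least one endpoint of each edge of a maximum matching. The edges of a matching are pairwise disjoint, so these endpoints are distinct, which gives $\tau(G)\ge\nu(G)$.

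For the reverse inequality, fix a maximum matching $M$. Let $U\subseteq X$ be the set of $M$-unsaturated vertices of $X$. Let $Z$ be the set of vertices reachable from $U$ by $M$-alternating paths, meaning paths that leave $X$ along non-matching edges and return to $X$ along matching edges. Put
\[
S=X\cap Z,\qquad R=Y\cap Z,\qquad C=(X\setminus S)\cup R .
\]
The aim is to prove that $C$ is a vertex cover with $|C|=|M|$.

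\emph{$C$ is a cover.} Suppose some edge $xy$ with $x\in X$, $y\in Y$ is not covered by $C$. Then $x\in S$ and $y\notin R$.
\begin{itemize}
\item If $xy\notin M$, extending an alternating path to $x$ by the edge $xy$ reaches $y$, so $y\in R$, a contradiction.
\item If $xy\in M$, then $x\notin U$. So $x$ was reached along a matching edge, and that edge can only be $xy$, since $x$ is incident to exactly one edge of $M$. Hence $y$ precedes $x$ on the path and $y\in R$, again a contradiction.
\end{itemize}

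\emph{$|C|=|M|$.} This uses the maximality of $M$. Every $y\in R$ is $M$-saturated, because otherwise the alternating path from $U$ to $y$ would be $M$-augmenting and $M$ would not be maximum. Also the $M$-partner of $y$ lies in $S$. Every $x\in X\setminus S$ is saturated, since all of $U$ lies in $S$, and its partner is not in $R$, because the partner of a vertex of $R$ lies in $S$. Thus each edge of $M$ has exactly one endpoint in $C$: the $Y$-endpoint when its $X$-endpoint is in $S$, and the $X$-endpoint otherwise. Therefore $|C|=|M|$, and so $\tau(G)\le\nu(G)$.

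The only delicate step is this counting, specifically the check that no $M$-edge has both or neither endpoint in $C$. That is exactly where the absence of augmenting paths, and hence the maximality of $M$, enters.

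Alternatively, one could cite total unimodularity of the bipartite incidence matrix with LP duality. I would follow the combinatorial route above, since the paper treats this result only as a cited classical fact.
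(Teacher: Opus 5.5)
Your proof is correct. It is the standard alternating-path argument for K\H{o}nig's theorem (essentially the one in Lov\'asz--Plummer), while the paper gives no proof of its own and simply cites the result as classical. The steps you leave tacit are immediate: $U\subseteq S$ via trivial paths; $y\in R$ in the first cover case even if $y$ already lies on the path to $x$; and the partner of $y\in R$ is not on the path to $y$, so the extension is a genuine path.
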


For a graph \(G\) on \(n\) vertices, write $\phi(G,\lambda): =\det\bigl(\lambda I_n-A(G)\bigr)$ for its adjacency characteristic polynomial.

\begin{lemma}[Sachs' theorem{\cite{BrouwerHaemers2011}}]\label{lem:sachs-forest}
Let \(T\) be a forest on \(n\) vertices. Then
    \[\phi(T,\lambda)
      =
      \sum_{j=0}^{\nu(T)}
      (-1)^j m_j(T)\lambda^{\,n-2j}.\]
\end{lemma}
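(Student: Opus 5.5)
The statement is Sachs' theorem specialized to forests. I would derive it from the general Sachs coefficient formula, or from a direct permutation expansion of the determinant.

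Write $\phi(T,\lambda)=\det(\lambda I_n-A(T))=\sum_{\sigma\in S_n}\operatorname{sgn}(\sigma)\prod_{i}(\lambda I_n-A)_{i,\sigma(i)}$. A permutation contributes only if every non-fixed point $i$ satisfies $A_{i\sigma(i)}=1$. That is, the non-trivial cycles of $\sigma$ must follow edges of $T$, as closed walks $i\to\sigma(i)\to\cdots$ that visit distinct vertices.

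The first step is that $T$ is a forest, so it has no cycles of length $\ge 3$. Every non-trivial cycle of a contributing $\sigma$ is therefore a transposition $(u\,v)$ with $uv\in E(T)$. Its cycles are disjoint, so these transpositions form a matching $M$ of size $j$, and $\sigma$ fixes the other $n-2j$ vertices. Conversely, every matching $M$ of size $j$ gives exactly one such $\sigma$. For the product, each fixed point contributes $\lambda$. Each transposition contributes $(-A_{uv})(-A_{vu})=1$ together with sign $-1$. The total is $(-1)^j\lambda^{n-2j}$.

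Summing over matchings gives $\sum_j(-1)^j m_j(T)\lambda^{n-2j}$. The sum stops at $j=\nu(T)$ because $m_j(T)=0$ for $j>\nu(T)$ and $m_{\nu(T)}(T)\ge1$.

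There is no real obstacle here. The only care needed is the bookkeeping of signs: the permutation sign $(-1)^j$ and the entries $-1$ from $\lambda I-A$, which cancel in pairs within each transposition.
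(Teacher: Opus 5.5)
Your proof is correct. The paper gives no proof of its own and simply cites Sachs' theorem from Brouwer--Haemers. Your permutation expansion is the standard proof of Sachs' coefficient formula specialized to the acyclic case: the only contributing permutations are products of disjoint edge-transpositions, i.e.\ matchings, each contributing $(-1)^j\lambda^{n-2j}$. So it is essentially the same route and needs nothing further.
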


It is convenient to use the reciprocal form of the characteristic polynomial. For a graph \(G\) on \(n\) vertices, define $\Delta_G(x):=\det\bigl(I_n-xA(G)\bigr)$. For a forest, Lemma~\ref{lem:sachs-forest} gives \[\Delta_T(x)=\sum_{j=0}^{\nu(T)}(-1)^j m_j(T)x^{2j}.\]

We record the following immediate consequence of Lemma~\ref{lem:sachs-forest} for later use.

\begin{corollary}\label{cor:rank-matching}
Let \(T\) be a forest on \(n\) vertices. Then \(\deg\Delta_T=2\nu(T).\) Moreover, the multiplicity of \(0\) as an adjacency eigenvalue of \(T\) is \(n-2\nu(T)\); in particular, \(\operatorname{rank}A(T)=2\nu(T).\)
\end{corollary}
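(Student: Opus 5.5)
The statement follows directly from the reciprocal form of Sachs' theorem, Lemma~\ref{lem:sachs-forest}. The plan is to read off the degree of \(\Delta_T\), convert it into a statement about the multiplicity of \(0\) in \(\phi(T,\lambda)\), and then use symmetry of \(A(T)\) to obtain the rank.

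For the degree, I will use the displayed identity \(\Delta_T(x)=\sum_{j=0}^{\nu(T)}(-1)^j m_j(T)x^{2j}\). Since \(\nu(T)\) is the matching number, there is at least one matching of size \(\nu(T)\), so the top coefficient \((-1)^{\nu(T)}m_{\nu(T)}(T)\) is nonzero. No larger \(j\) contributes, because \(m_j(T)=0\) for \(j>\nu(T)\). Hence \(\deg\Delta_T=2\nu(T)\). If \(T\) has no edges, then \(\nu(T)=0\) and \(\Delta_T=1\), which is consistent with this.

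For the zero multiplicity, I will use Lemma~\ref{lem:sachs-forest} directly. We have \(\phi(T,\lambda)=\sum_{j=0}^{\nu(T)}(-1)^j m_j(T)\lambda^{n-2j}\). The lowest-degree term is \((-1)^{\nu(T)}m_{\nu(T)}(T)\lambda^{n-2\nu(T)}\), and its coefficient is nonzero. So \(\lambda^{n-2\nu(T)}\) divides \(\phi(T,\lambda)\), and \(\lambda^{n-2\nu(T)+1}\) does not. Therefore the algebraic multiplicity of the eigenvalue \(0\) is exactly \(n-2\nu(T)\).

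Finally, \(A(T)\) is real symmetric, hence diagonalizable. So the algebraic multiplicity of \(0\) equals \(\dim\ker A(T)\). Rank–nullity then gives \(\operatorname{rank}A(T)=n-(n-2\nu(T))=2\nu(T)\).

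There is no real obstacle here. The only point needing care is the nonvanishing of \(m_{\nu(T)}(T)\), which holds by the definition of the matching number. The other step to state explicitly is the passage from algebraic to geometric multiplicity, which uses the symmetry of \(A(T)\).
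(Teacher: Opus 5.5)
Your proof is correct and follows the paper's argument: nonvanishing of \(m_{\nu(T)}(T)\) gives both \(\deg\Delta_T=2\nu(T)\) and the lowest power \(\lambda^{n-2\nu(T)}\) in \(\phi(T,\lambda)\), and the rank follows. You also invoke the symmetry, hence diagonalizability, of \(A(T)\) to pass from the algebraic multiplicity of \(0\) to the nullity, a step the paper leaves implicit.
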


\begin{proof}
Since \(m_{\nu(T)}(T)>0\), the formula for \(\Delta_T(x)\) shows that \(\deg\Delta_T=2\nu(T).\) Likewise, the lowest nonzero power of \(\lambda\) in
\(\phi(T,\lambda)\) is \(\lambda^{n-2\nu(T)}\). Hence \(0\) has multiplicity \(n-2\nu(T)\), and therefore \(\operatorname{rank}A(T)=2\nu(T).\)
\end{proof}

\begin{corollary}\label{cor:cospectral-matchings}
Cospectral forests have the same number of matchings of every size. In particular, cospectral forests have the same matching number.
\end{corollary}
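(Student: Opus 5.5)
The statement to prove is Corollary~\ref{cor:cospectral-matchings}: cospectral forests have the same number of matchings of every size, and hence the same matching number. The plan is to read the matching numbers directly off the characteristic polynomial via Lemma~\ref{lem:sachs-forest}.

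First, let \(T\) and \(T'\) be cospectral forests. Having the same adjacency spectrum (as a multiset) means they have the same order \(n\), namely the number of eigenvalues counted with multiplicity. It also means \(\phi(T,\lambda)=\phi(T',\lambda)\), since the characteristic polynomial is \(\prod_i(\lambda-\lambda_i)\) over the eigenvalues.

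Next, apply Lemma~\ref{lem:sachs-forest} to both forests. This gives
\[
\sum_{j\ge 0}(-1)^j m_j(T)\lambda^{n-2j}=\sum_{j\ge 0}(-1)^j m_j(T')\lambda^{n-2j},
\]
where we set \(m_j=0\) for \(j\) beyond the matching number. The exponents \(n-2j\) are distinct for distinct \(j\), so comparing the coefficients of \(\lambda^{n-2j}\) yields \(m_j(T)=m_j(T')\) for every \(j\ge 0\).

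Finally, \(\nu(G)\) is the largest \(j\) with \(m_j(G)>0\). It is therefore determined by the sequence \((m_j)\), so \(\nu(T)=\nu(T')\). Alternatively, Corollary~\ref{cor:rank-matching} gives this directly: the rank \(2\nu\) is a spectral invariant.

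There is no real obstacle here. The only point to state explicitly is that equal spectra force equal order \(n\), which is what aligns the exponents \(n-2j\) in the two expansions.
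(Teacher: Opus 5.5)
Your proposal is correct and follows the paper's own argument: you use that cospectral forests share a characteristic polynomial, compare coefficients in Sachs' formula to get $m_j(T)=m_j(T')$ for all $j$, and read off $\nu$ as the largest $j$ with $m_j>0$. Your explicit remark that equal spectra force equal order $n$, which aligns the exponents $n-2j$, is a detail the paper leaves implicit.
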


\begin{proof}
Cospectral forests have the same characteristic polynomial. Comparing the coefficients in displayed formula for \( \phi(T,\lambda) \) gives $m_j(T)=m_j(T')$, for every $j\geq 0$. The assertion about the matching number follows immediately.
\end{proof}

We now define the spectral–walk data. Let \(G\) be a graph on \(n\) vertices, and let \(\mathbf{1}_n\) denote the all-one vector of length \(n\). For \(k\geq 0\), define $W_k(G):=\mathbf{1}_n^{\mathsf T}A(G)^k\mathbf{1}_n$. Thus \(W_k(G)\) is the total number of walks of length \(k\) in \(G\). The sequence $\bigl(W_0(G),W_1(G),W_2(G),\ldots\bigr)$ is called the \emph{total-walk sequence} of \(G\). Its generating function is $F_G(x):=\sum_{k\geq 0}W_k(G)x^k$. Since $\bigl(I_n-xA(G)\bigr)^{-1}=\sum_{k\geq 0}x^kA(G)^k$ as a formal power series, we have $F_G(x)=\mathbf{1}_n^{\mathsf T}\bigl(I_n-xA(G)\bigr)^{-1}\mathbf{1}_n$. Two graphs \(G\) and \(H\) are said to have the same \emph{spectral-walk data} if $\operatorname{Spec}(G)=\operatorname{Spec}(H)$ and $W_k(G)=W_k(H)$ for every $k\geq 0$.

\begin{remark}\label{rem:generalized-spectrum}
The spectral-walk data are equivalent to the generalized spectrum $(\operatorname{Spec}(G), \operatorname{Spec}(\overline{G}))$.
\end{remark}
This viewpoint is closely related to the walk-matrix approach to generalized spectral characterization \cite{WangXu2006,Wang2017,QiuWangWangZhang2023}. We use the spectral-walk formulation because the generating function \(F_G(x)\) is particularly convenient for trees with pendant attachments.

For the algebraic arguments below, it is convenient to encode the same data by a pair of polynomials. For a graph \(G\) on \(n\) vertices, define $N_G(x):=\mathbf{1}_n^{\mathsf T}\operatorname{adj}\bigl(I_n-xA(G)\bigr)\mathbf{1}_n$, where \(\operatorname{adj}(M)\) denotes the adjugate of \(M\). The identity $M^{-1}=\operatorname{adj}(M)/\det M$ gives $F_G(x) =N_G(x)/\Delta_G(x)$. Thus the spectrum and the total-walk sequence can be encoded by the two polynomials \(\Delta_G\) and \(N_G\).

\begin{proposition}\label{prop:spectral-walk-polynomials}
For any two graphs \( G \) and \( H \), the following are equivalent:
\begin{enumerate}[label=(\roman*)]
    \item \( G \) and \( H \) have the same spectral-walk data;
    \item \( \Delta_G(x) = \Delta_H(x) \) and \( N_G(x) = N_H(x) \).
\end{enumerate}
\end{proposition}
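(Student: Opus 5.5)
The plan is to prove the two implications separately. Both rest on the identity \(F_G(x)=N_G(x)/\Delta_G(x)\), read as an equality of formal power series in \(\mathbb{Q}[[x]]\), together with \(\Delta_G(0)=1\).

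\emph{(i) \(\Rightarrow\) (ii).} Equal adjacency spectra give \(\phi(G,\lambda)=\phi(H,\lambda)\), and in particular the two graphs have the same number of vertices \(n\). Since \(\Delta_G(x)=x^{n}\phi(G,1/x)\), we get \(\Delta_G=\Delta_H\). Equality of all the \(W_k\) gives \(F_G=F_H\) as formal power series. The identity \(N_G=\Delta_G F_G\) holds in \(\mathbb{Q}[[x]]\): multiply \(\operatorname{adj}(I-xA)\,(I-xA)^{-1}=\det(I-xA)\,I\) on the left by \(\mathbf{1}^{\mathsf T}\) and on the right by \(\mathbf{1}\). Hence \(N_G=\Delta_GF_G=\Delta_HF_H=N_H\).

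\emph{(ii) \(\Rightarrow\) (i).} Because \(\Delta_G(0)=1\), the series \(\Delta_G\) is invertible in \(\mathbb{Q}[[x]]\), so \(F_G=N_G\Delta_G^{-1}=N_H\Delta_H^{-1}=F_H\). Comparing coefficients gives \(W_k(G)=W_k(H)\) for every \(k\).

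To recover the spectrum from \(\Delta\), first recover \(n\). We have \(W_0=n\), and \(W_0\) is the constant term of \(F_G=N_G/\Delta_G\), which equals \(N_G(0)=\mathbf 1^{\mathsf T}\mathbf 1=n\). So \(n\) is determined by \(N_G\). With \(n\) known, \(\phi(G,\lambda)=\lambda^{n}\Delta_G(1/\lambda)\) is determined by \(\Delta_G\), and hence so is \(\operatorname{Spec}(G)\).

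The only subtle point is exactly this recovery of \(n\). The polynomial \(\Delta_G\) determines only the nonzero eigenvalues, because its degree is \(n\) minus the multiplicity of \(0\). The multiplicity of the eigenvalue \(0\) must therefore be extracted from the walk data, here through \(W_0=N_G(0)\). Apart from this point, the argument is routine formal power series manipulation.
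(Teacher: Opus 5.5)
Your proof is correct and follows the paper's argument essentially step for step: both directions go through \(F_G=N_G/\Delta_G\), and, as in the paper, it is the order \(n=N_G(0)\) that fixes the multiplicity of the eigenvalue \(0\), which \(\Delta_G\) alone cannot detect. One small slip: the identity you multiply by \(\mathbf{1}^{\mathsf T}\) and \(\mathbf{1}\) should be \(\operatorname{adj}(I-xA)=\det(I-xA)\,(I-xA)^{-1}\) (equivalently \(\operatorname{adj}(M)M=\det(M)I\)), not \(\operatorname{adj}(I-xA)(I-xA)^{-1}=\det(I-xA)I\); with that correction the step \(N_G=\Delta_GF_G\) goes through as you intend.
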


\begin{proof}
Suppose first that \(G\) and \(H\) have the same spectral-walk data. Cospectrality gives $\Delta_G(x)=\Delta_H(x)$, while equality of the total-walk sequences gives
$F_G(x)=F_H(x)$. It follows from $F_G(x) =N_G(x)/\Delta_G(x)$ that $N_G(x)=\Delta_G(x)F_G(x)=\Delta_H(x)F_H(x)=N_H(x)$.

Conversely, suppose that $\Delta_G(x)=\Delta_H(x)$ and $N_G(x)=N_H(x)$. Then $F_G(x)=N_G(x)/\Delta_G(x)$ gives $F_G(x)=N_G(x)/\Delta_G(x)=N_H(x)/\Delta_H(x)=F_H(x)$. Therefore $W_k(G)=W_k(H)$ for every $k\geq 0$. It remains to recover cospectrality. Since $N_G(0)=\mathbf{1}_{|V(G)|}^{\mathsf T} \mathbf{1}_{|V(G)|}=|V(G)|$, we obtain
$|V(G)|=N_G(0)=N_H(0)=|V(H)|$.

The common polynomial \(\Delta_G=\Delta_H\) determines all nonzero adjacency eigenvalues, with their multiplicities.  Because \(G\) and \(H\) have the same order, the multiplicity of the eigenvalue \(0\) is also the same. Hence $\operatorname{Spec}(G)=\operatorname{Spec}(H)$. Thus \(G\) and \(H\) have the same spectral-walk data.
\end{proof}

The following fixed-core reduction will be essential later.

\begin{lemma}[Schur complement {\cite{BrouwerHaemers2011}}]\label{lem:schur-complement}
Let $ M=
    \begin{pmatrix}
        B&C\\
        D&E
    \end{pmatrix}$,
where \(E\) is invertible. Then $\det M=\det(E)\det\bigl(B-CE^{-1}D\bigr)$. If \(M\) is invertible, then the upper-left block of \(M^{-1}\) is $\bigl(B-CE^{-1}D\bigr)^{-1}$.
\end{lemma}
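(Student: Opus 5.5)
The plan is to prove the determinant identity by a block LU factorization, and then read off the inverse from the block inverse formula. Since \(E\) is invertible, I will verify by direct block multiplication that
\[
M=\begin{pmatrix} I & CE^{-1}\\ 0 & I\end{pmatrix}
\begin{pmatrix} B-CE^{-1}D & 0\\ 0 & E\end{pmatrix}
\begin{pmatrix} I & 0\\ E^{-1}D & I\end{pmatrix}.
\]
The check is routine. The lower-right block is \(E\). The upper-right block is \(CE^{-1}E=C\). The lower-left block is \(EE^{-1}D=D\). The upper-left block is \((B-CE^{-1}D)+CE^{-1}D=B\).

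Both outer factors are block unitriangular, so each has determinant \(1\). The determinant of the block-diagonal middle factor is \(\det(B-CE^{-1}D)\det(E)\). Multiplicativity of the determinant then gives \(\det M=\det(E)\det(B-CE^{-1}D)\).

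For the second assertion, I will assume \(M\) is invertible. The formula just proved then gives \(\det(B-CE^{-1}D)\neq 0\), so the Schur complement \(S:=B-CE^{-1}D\) is invertible. I will invert each factor of the factorization:
\begin{itemize}
\item the inverse of \(\begin{pmatrix} I & CE^{-1}\\ 0& I\end{pmatrix}\) is \(\begin{pmatrix} I & -CE^{-1}\\ 0& I\end{pmatrix}\);
\item the inverse of \(\begin{pmatrix} I & 0\\ E^{-1}D& I\end{pmatrix}\) is \(\begin{pmatrix} I & 0\\ -E^{-1}D& I\end{pmatrix}\);
\item the middle factor inverts to \(\operatorname{diag}(S^{-1},E^{-1})\).
\end{itemize}

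Then \(M^{-1}\) is the product of these inverses in reverse order:
\[
M^{-1}=\begin{pmatrix} I & 0\\ -E^{-1}D & I\end{pmatrix}\begin{pmatrix} S^{-1} & 0\\ 0 & E^{-1}\end{pmatrix}\begin{pmatrix} I & -CE^{-1}\\ 0 & I\end{pmatrix}.
\]
The first row of blocks of the left factor is \((I,\;0)\). Hence the first block-row of the product of the first two factors is \((S^{-1},\;0)\). Multiplying this by the last factor gives the block-row \((S^{-1},\;-S^{-1}CE^{-1})\). So the upper-left block of \(M^{-1}\) is \(S^{-1}=(B-CE^{-1}D)^{-1}\), as claimed.

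I do not expect any genuine obstacle. The only care needed is to keep the order of the noncommuting block factors straight, and to note that invertibility of \(S\) is derived from the determinant formula rather than assumed.
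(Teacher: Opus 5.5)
Your proof is correct. The block factorization multiplies out to $M$, the determinant identity follows from multiplicativity, and you rightly derive the invertibility of $B-CE^{-1}D$ from $\det M\neq 0$ rather than assuming it. The paper gives no proof of its own; it cites the lemma as standard from Brouwer--Haemers, and your block $LDU$ factorization is the standard argument. It uses only the invertibility of $E$ and the multiplicativity of the determinant, so it also covers the paper's application to $I-xA(R(\mathbf{a}))$, where $E=I_L$ and the entries are polynomials in $x$.
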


In Section~\ref{sec:core-reduction}, we apply Lemma~\ref{lem:schur-complement} to the block decomposition of \(I-xA(T)\) obtained by separating the core vertices from the pendant vertices. The resulting formulas for \(\Delta_T\) and \(F_T\) will be proved there explicitly.

% =========================================================
\section{Core reduction}\label{sec:core-reduction}
% =========================================================

We now develop the finite-core reduction used in the proof of Theorem~\ref{thm:main-rigidity}. In this section we represent a tree by an intrinsic core together with the numbers of pendant vertices attached to the core vertices. For fixed matching number, this reduces the problem to finitely many parametric families.

For a tree \(T\), let \(L(T)\) denote the set of pendant vertices, and $N_T(v):=\{u\in V(G):uv\in E(G)\}$ the open neighbourhood of \(v\) respectively. Whenever \(|V(T)|\geq 3\), define the \emph{core} of \(T\) by $R(T):=T-L(T)$. The graph \(R(T)\) is connected: the unique path in \(T\) between two non-pendant vertices cannot contain a pendant vertex as an internal vertex. Thus \(R(T)\) is itself a tree, possibly consisting of a single vertex.

We first bound the order of the core in terms of the matching number.

\begin{lemma}\label{lem:bounded-core}
Let \(T\) be a tree with \(|V(T)|\geq 3\) and \(\nu(T)=t\). Then $|V(R(T))|\leq 2t-1$.
\end{lemma}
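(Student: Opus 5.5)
The plan is to bound the number of core vertices using a maximum matching, or equivalently a minimum vertex cover via Lemma~\ref{lem:konig}. Fix a maximum matching $M$ of $T$ with $|M|=t$, and let $R=R(T)$. The key observation is that every leaf $\ell$ of the core tree $R$ (when $|V(R)|\ge 2$) must have a pendant vertex of $T$ attached to it. Indeed, $\ell$ is not a leaf of $T$, so $\deg_T\ell\ge 2$. On the other hand, $\ell$ has exactly one neighbour in $R$. Hence $\ell$ has at least one neighbour in $L(T)$.

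First I dispose of the trivial cases. If $|V(R)|=1$, then $T$ is a star with at least two edges, so $t=1$ and $|V(R)|=1=2t-1$.

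Now assume $|V(R)|=r\ge 2$. Then $R$ is a tree with at least two leaves. I will build a matching of $T$ of size at least $(r+1)/2$, which gives $r\le 2t-1$. Let $\ell_1,\dots,\ell_s$ be the leaves of $R$, where $s\ge 2$, and choose a pendant neighbour $p_i$ of each $\ell_i$ in $T$. The edges $\ell_ip_i$ are pairwise disjoint. Next, take a maximum matching $M_0$ of the forest $R-\{\ell_1,\dots,\ell_s\}$. The union $M_0\cup\{\ell_ip_i\}$ is a matching of $T$, of size $\nu(R-\text{leaves})+s$. This crude bound is awkward to compare with $r$, so I would instead argue more directly, as follows.

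\textbf{Direct argument via a vertex cover.} Let $C$ be a minimum vertex cover of $T$, so $|C|=t$ by Lemma~\ref{lem:konig}. Since we may replace any pendant vertex in $C$ by its (core) neighbour, assume $C\subseteq V(R)$. Every vertex $v\in V(R)$ has a pendant neighbour or not.
\begin{itemize}
\item If $v$ has a pendant neighbour, then $v\in C$.
\item If $v\notin C$, then all $R$-neighbours of $v$ lie in $C$. Moreover, $v$ is not an $R$-leaf, so $\deg_R v\ge 2$.
\end{itemize}
So $V(R)\setminus C$ is an independent set in $R$, each of whose vertices has $R$-degree at least $2$ with all neighbours in $C$. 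Count the edges of $R$ between $U:=V(R)\setminus C$ and $C$. This count is at least $2|U|$. Since $R$ is a tree, the bipartite subgraph on $U\cup C$ induced by these edges is a forest, so it has at most $|U|+|C|-1$ edges. Therefore $2|U|\le |U|+|C|-1$, i.e.\ $|U|\le t-1$, and so $r=|U|+|C|\le 2t-1$.

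The main step requiring care is justifying that $C$ may be taken inside $V(R)$ with the same size. A pendant $p\in C$ can be swapped for its unique neighbour, which is a core vertex since $|V(T)|\ge 3$, and the swap preserves the cover property and does not increase $|C|$. A second point to check is that the forest edge-count bound is applied to the $(U,C)$-edges only, which is a subgraph of the tree $R$ and hence acyclic.
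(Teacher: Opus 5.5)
Your vertex-cover argument is correct and is essentially the paper's proof. Like the paper, you choose a minimum cover $C$ with no pendant vertices, note that each core vertex outside $C$ has at least two neighbours in $C$, and compare $2|U|$ with $|U|+t-1$. The paper bounds by all of $E(R)$, while you bound by the $(U,C)$-edges only; this difference is immaterial. The abandoned matching-based opening plays no role and can simply be deleted.
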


\begin{proof}
By Lemma~\ref{lem:konig}, \(T\) has a minimum vertex cover \(C\) of cardinality $|C|=\tau(T)=\nu(T)=t$. We may choose \(C\) to contain no pendant vertex.  Indeed, suppose that a pendant vertex \(v\in C\) has unique neighbour \(u\). If \(u\in C\), then \(C\setminus\{v\}\) is still a vertex cover, contradicting the minimum cardinality of \(C\).  Hence \(u\notin C\). Since \(|V(T)|\geq 3\), the vertex \(u\) is not pendant, and $(C\setminus\{v\})\cup\{u\}$ is a minimum vertex cover containing fewer pendant vertices.  Repeating this replacement gives the required choice of \(C\).

Put \(I:=V(T)\setminus C\). Since \(C\) is a vertex cover, \(I\) is independent. Let \(r\) be the number of non-pendant vertices in \(I\). Because \(C\) contains no pendant vertex, the core has precisely \(t+r\) vertices: $V(R(T))=C\cup\{v\in I:\deg_T(v)\geq 2\}$. Every one of the \(r\) core vertices belonging to \(I\) has at least two neighbours in \(C\). All these incident edges lie in \(R(T)\). Since \(R(T)\) is a tree on \(t+r\) vertices, $2r\leq |E(R(T))|=t+r-1$. Thus \(r\leq t-1\), and consequently $|V(R(T))|=t+r\leq 2t-1$.
\end{proof}

Let \(R\) be a tree with $V(R)=\{v_1,\ldots,v_q\}$. For $\mathbf{a}=(a_1,\ldots,a_q)^{\mathsf T}\in\mathbb Z_{\geq 0}^{q}$, let \(R(\mathbf{a})\) denote the tree obtained from \(R\) by attaching \(a_i\) new pendant vertices to \(v_i\), for each \(1\leq i\leq q\).  We write $|\mathbf{a}|:=a_1+\cdots+a_q$ and $\operatorname{supp}(\mathbf{a}):=\{v_i\in V(R):a_i>0\}$. If \(R=R(T)\), then \(a_i\) is uniquely determined by $a_i=|N_T(v_i)\cap L(T)|$, and $T=R(T)(\mathbf{a})$. Thus the pair consisting of the core and the pendant multiplicity function is intrinsic to \(T\), up to an automorphism of the core. The canonical parameters satisfy $\deg_R(v_i)+a_i\geq 2, (1\leq i\leq q)$. In particular, every pendant vertex of \(R\) belongs to \(\operatorname{supp}(\mathbf{a})\).

\begin{lemma}\label{lem:support-bound}
Let \(T=R(T)(\mathbf{a})\). Then $|\operatorname{supp}(\mathbf{a})|\leq \nu(T)$.
\end{lemma}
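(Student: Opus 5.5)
The plan is to exhibit a matching in \(T\) with one edge for each support vertex. Let \(S=\operatorname{supp}(\mathbf{a})\). By definition, every \(v_i\in S\) satisfies \(a_i>0\), so it has at least one pendant neighbour in \(T\) that lies outside the core \(R(T)\). For each \(v_i\in S\), choose one such pendant vertex \(\ell_i\) adjacent to \(v_i\), and consider the edge set
\[
M:=\{v_i\ell_i : v_i\in S\}.
\]

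Next I check that \(M\) is a matching. The vertices \(v_i\) are distinct by construction. The leaves \(\ell_i\) are also distinct, because a pendant vertex has a unique neighbour, so \(\ell_i=\ell_j\) would force \(v_i=v_j\). Finally, no \(v_i\) coincides with any \(\ell_j\): each \(v_i\) lies in \(V(R(T))\), whereas each \(\ell_j\) lies in \(L(T)\), and these sets are disjoint. Hence the edges of \(M\) are pairwise vertex-disjoint.

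It follows that
\[
|\operatorname{supp}(\mathbf{a})|=|M|\le \nu(T).
\]

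The only subtle point is the degenerate case. The core \(R(T)\) is defined only when \(|V(T)|\ge 3\), and this is exactly what guarantees that core vertices and pendant vertices are disjoint, as opposed to \(K_2\), where both endpoints are pendant. I will state this assumption explicitly. Otherwise the argument is routine and I see no real obstacle.
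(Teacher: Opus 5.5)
Your proof is correct and is essentially the paper's argument: choose one pendant neighbour $\ell_v$ for each $v\in\operatorname{supp}(\mathbf{a})$ and observe that the edges $v\ell_v$ form a matching. Your check that the $\ell_v$ are distinct and disjoint from the core (which uses $|V(T)|\ge 3$) only spells out what the paper leaves implicit.
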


\begin{proof}
For each \(v\in\operatorname{supp}(\mathbf{a})\), choose one pendant neighbour \(\ell_v\) of \(v\). The edges $\{v\ell_v:v\in\operatorname{supp}(\mathbf{a})\}$ are pairwise vertex-disjoint and hence form a matching in \(T\).
\end{proof}

The next formula shows that, for a fixed core, the matching number depends only on the support of the pendant vector, and not on the positive values of its entries.

\begin{lemma}\label{lem:matching-support-formula}
Let \(R\) be a tree, let \(\mathbf{a}\in\mathbb Z_{\geq 0}^{V(R)}\), and put $S:=\operatorname{supp}(\mathbf{a})$. Then \[\nu(R(\mathbf{a}))=\max_{X\subseteq S}\bigl(|X|+\nu(R-X)\bigr).\]
\end{lemma}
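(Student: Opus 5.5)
We prove the two inequalities separately; the whole argument is a comparison of matchings in \(R(\mathbf{a})\) with matchings in \(R\). Throughout, put \(T:=R(\mathbf{a})\), and note that \(V(R)\subseteq V(T)\) and that every new vertex is a leaf of \(T\) adjacent to some \(v\in S\).

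\emph{Lower bound.} Fix \(X\subseteq S\). For each \(v\in X\) choose one pendant neighbour \(\ell_v\) of \(v\) in \(T\); this is possible because \(a_v>0\). Let \(M_0\) be a maximum matching of the forest \(R-X\). The edges \(v\ell_v\) (\(v\in X\)) are pairwise disjoint and avoid \(V(R)\setminus X\). The edges of \(M_0\) lie inside \(V(R)\setminus X\) and avoid all new vertices. Hence \(M_0\cup\{v\ell_v:v\in X\}\) is a matching of \(T\) of size \(|X|+\nu(R-X)\). Taking the maximum over \(X\) gives \(\nu(T)\ge\max_{X\subseteq S}(|X|+\nu(R-X))\).

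\emph{Upper bound.} Let \(M\) be a maximum matching of \(T\). Every edge of \(T\) either lies in \(R\) or joins a vertex \(v\in S\) to one of its new pendant vertices. Let \(X\) be the set of vertices \(v\in S\) that are matched by \(M\) to a new pendant vertex. Each such \(v\) accounts for exactly one edge of \(M\), since \(M\) is a matching and the pendant vertices have degree \(1\). The remaining edges of \(M\) lie in \(R\) and avoid \(X\), because each vertex of \(X\) is already covered. So they form a matching of \(R-X\). Therefore
\[
|M| = |X| + |M\cap E(R-X)| \le |X|+\nu(R-X),
\]
and since \(X\subseteq S\), this gives the reverse inequality.

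There is no real obstacle here. The only point needing care is the disjointness bookkeeping: distinct vertices of \(X\) use distinct pendant neighbours (automatic, since each pendant vertex has a unique neighbour), and the edges of \(M\) inside \(R\) avoid \(X\). Both follow directly from the definitions. The formula also makes visible that \(\nu(T)\) depends only on \(S\) and not on the positive values \(a_v\), as claimed in the sentence preceding the lemma.
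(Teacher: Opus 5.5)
Your proof is correct and follows essentially the same argument as the paper. The upper bound comes from letting \(X\) be the set of core vertices matched in \(M\) to attached pendant vertices and noting that the remaining edges of \(M\) form a matching of \(R-X\); the lower bound combines one pendant edge at each vertex of \(X\) with a maximum matching of \(R-X\), and the only difference is that you prove the two inequalities in the opposite order.
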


\begin{proof}
Let \(M\) be a matching in \(R(\mathbf{a})\), and let \(X\subseteq S\) be the set of core vertices incident in \(M\) with a pendant edge. At most one pendant edge of \(M\) is incident with each core vertex. After the \(|X|\) pendant edges are removed, all remaining edges of \(M\) lie in \(R-X\). Hence
\[
|M|\le |X|+\nu(R-X)
\le \max_{Y\subseteq S}\bigl(|Y|+\nu(R-Y)\bigr).
\]
Taking the maximum over all matchings \(M\) in \(R(\mathbf a)\) gives
\[
\nu(R(\mathbf a))
\le
\max_{Y\subseteq S}\bigl(|Y|+\nu(R-Y)\bigr).
\]

Conversely, fix \(X\subseteq S\). For each \(v\in X\), choose one pendant neighbour \(\ell_v\) of \(v\). The edges $\{v\ell_v:v\in X\}$, together with a maximum matching of \(R-X\), form a matching in \(R(\mathbf{a})\) of size $|X|+\nu(R-X)$. Taking the maximum over \(X\subseteq S\) proves the reverse inequality.
\end{proof}

For later use, define $\mu_R(S):=\max_{X\subseteq S}\bigl(|X|+\nu(R-X)\bigr)$. Thus $\nu(R(\mathbf{a}))=\mu_R(\operatorname{supp}(\mathbf{a}))$.

\begin{corollary}\label{cor:finite-family-reduction}
For every fixed integer \(t\geq 1\), trees \(T\) satisfying $|V(T)|\geq 3$ and $\nu(T)=t$ belong to finitely many canonical core-support families $(R,S)$, where $|V(R)|\leq 2t-1, |S|\leq t, L(R)\subseteq S, \mu_R(S)=t$. Within a fixed family, the trees are obtained by assigning arbitrary positive integer pendant multiplicities to the vertices of \(S\), with zero multiplicity outside \(S\), modulo the action of \(\operatorname{Aut}(R)\).
\end{corollary}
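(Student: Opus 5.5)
The plan is to assemble the corollary directly from Lemmas~\ref{lem:bounded-core}, \ref{lem:support-bound} and \ref{lem:matching-support-formula}, together with the intrinsic description $T=R(T)(\mathbf a)$ established just before Lemma~\ref{lem:support-bound}. Given a tree $T$ with $|V(T)|\ge 3$ and $\nu(T)=t$, I will set $R:=R(T)$ and let $\mathbf a$ be the canonical pendant vector, $a_i=|N_T(v_i)\cap L(T)|$. I will then put $S:=\operatorname{supp}(\mathbf a)$ and check the four listed conditions in turn.

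\begin{itemize}
\item \emph{Core size.} Lemma~\ref{lem:bounded-core} gives $|V(R)|\le 2t-1$.
\item \emph{Support size.} Lemma~\ref{lem:support-bound} gives $|S|\le t$.
\item \emph{Leaves of the core.} The canonical inequality $\deg_R(v_i)+a_i\ge 2$ shows that every pendant vertex of $R$ has $a_i\ge 1$, so $L(R)\subseteq S$. If $R$ is a single vertex, this inequality still forces $a_1\ge 2$, so the vertex lies in $S$.
\item \emph{Matching number.} Lemma~\ref{lem:matching-support-formula} gives $\mu_R(S)=\nu(R(\mathbf a))=\nu(T)=t$.
\end{itemize}

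Next comes finiteness. There are only finitely many trees $R$ (up to isomorphism) on at most $2t-1$ vertices, and each has only finitely many subsets $S$. Hence the pairs $(R,S)$ satisfying the four conditions form a finite collection, up to isomorphism of pairs.

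For the description within a family, I will argue in both directions. By construction $T=R(\mathbf a)$, where $\mathbf a$ is positive exactly on $S$ and zero off $S$. The core and its labelling are determined by $T$ only up to an automorphism of $R$, so $\mathbf a$ is determined up to the action of $\operatorname{Aut}(R)$. This also identifies when two parameter vectors give isomorphic trees: an isomorphism $R(\mathbf a)\cong R(\mathbf b)$ maps the core onto the core and pendants onto pendants, so it induces an automorphism of $R$ carrying $\mathbf a$ to $\mathbf b$.

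The step needing care is the converse: every positive assignment on $S$ must give a tree in the same family. For this I must check that $R$ really is the core of $R(\mathbf a)$ and that $\mathbf a$ is its canonical vector.

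\begin{itemize}
\item Each new pendant vertex has degree $1$.
\item Each core vertex $v_i$ has degree $\deg_R(v_i)+a_i$. Off $S$ we have $a_i=0$, and since $L(R)\subseteq S$ such a vertex is not a leaf of $R$, so its degree is $\deg_R(v_i)\ge 2$. On $S$ we have $a_i\ge1$. If also $\deg_R(v_i)\ge 1$, the degree is at least $2$. If $\deg_R(v_i)=0$, then $R=K_1$; there $\mu_R(S)=t\ge1$ forces $S=\{v_1\}$ and the resulting tree is the star $K_{1,a_1}$. For this case I need $a_1\ge 2$ (or else to note that $K_{1,1}$ has fewer than three vertices) to guarantee $|V(T)|\ge3$.
\item Hence $L(R(\mathbf a))$ is exactly the set of new vertices, $R$ is the core, and $\mathbf a$ is the canonical vector.
\item The matching number is $\mu_R(S)=t$ by Lemma~\ref{lem:matching-support-formula}, independently of the positive values.
\end{itemize}

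The degenerate single-vertex core is the only place I expect minor friction, namely the requirement $a_1\ge2$ for the star. I will handle it by a remark restricting that one family, or by noting that $|V(T)|\ge 3$ excludes $a_1=1$.
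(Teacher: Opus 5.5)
Your proposal is correct and follows the paper's proof essentially step for step: the bounds from Lemmas~\ref{lem:bounded-core} and~\ref{lem:support-bound}, the inclusion $L(R)\subseteq S$ from the fact that core vertices have degree at least $2$ in $T$, the identity $\mu_R(S)=t$ from Lemma~\ref{lem:matching-support-formula}, finiteness by counting small trees and their subsets, and the converse via $L(R)\subseteq S$ guaranteeing that $R$ is the canonical core of $R(\mathbf a)$. You also handle two points the paper leaves implicit. One is the single-vertex core: $R=K_1$ with $a_1=1$ gives $K_2$, to which the core construction does not apply, and the paper's converse silently overlooks this. The other is the argument that an isomorphism $R(\mathbf a)\cong R(\mathbf b)$ induces an automorphism of $R$ carrying $\mathbf a$ to $\mathbf b$.
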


\begin{proof}
The bounds on \(|V(R)|\) and \(|S|\) follow from Lemmas~\ref{lem:bounded-core} and~\ref{lem:support-bound}. Moreover, \(L(R)\subseteq S\): if \(v\in L(R)\) and \(v\notin S\),
then \(v\) has no pendant neighbour in \(T\), so \(\deg_T(v)=\deg_R(v)=1\), contradicting \(v\in V(R(T))\). By Lemma~\ref{lem:matching-support-formula}, \(\mu_R(S)=\nu(R(\mathbf a))=\nu(T)=t.\) There are only finitely many trees \(R\), up to isomorphism, of order at most \(2t-1\), and each such tree has only finitely many subsets
of cardinality at most \(t\). Hence only finitely many admissible pairs \((R,S)\) occur, modulo core automorphisms.

Conversely, for any such pair \((R,S)\), assign arbitrary positive integer multiplicities to the vertices of \(S\) and zero multiplicity outside \(S\). Since \(L(R)\subseteq S\), the resulting tree \(R(\mathbf a)\) has canonical core \(R\), and Lemma~\ref{lem:matching-support-formula} gives \(\nu(R(\mathbf a))=\mu_R(S)=t.\) Thus each admissible pair \((R,S)\) determines precisely such a canonical core-support family, with the pendant multiplicities identified modulo the action of \(\operatorname{Aut}(R)\).
\end{proof}

The preceding reduction leaves a fixed core together with its pendant multiplicities. We now express the spectral–walk data of  \(R(\mathbf{a})\) directly in terms of the core \(R\) and the pendant vector \(\mathbf{a}\). Let \(A_R\) be the adjacency matrix of \(R\), and define $D_\mathbf{a}:=\operatorname{diag}(a_1,\ldots,a_q)$ and $M_{R,\mathbf{a}}(x):=I_q-xA_R-x^2D_\mathbf{a}$. Also put $u_\mathbf{a}(x):=\mathbf{1}_q+x\mathbf{a}$.

\begin{proposition}\label{prop:fixed-core-compression}
For every tree \(R\) and every \(\mathbf{a}\in\mathbb Z_{\geq 0}^{V(R)}\), $\Delta_{R(\mathbf{a})}(x)=\det M_{R,\mathbf{a}}(x)$, and $F_{R(\mathbf{a})}(x)=|\mathbf{a}|+u_\mathbf{a}(x)^{\mathsf T}M_{R,\mathbf{a}}(x)^{-1}u_\mathbf{a}(x)$. Consequently, $N_{R(\mathbf{a})}(x)=|\mathbf{a}|\det M_{R,\mathbf{a}}(x)+u_\mathbf{a}(x)^{\mathsf T}
 \operatorname{adj}\bigl(M_{R,\mathbf{a}}(x)\bigr)u_\mathbf{a}(x)$.
\end{proposition}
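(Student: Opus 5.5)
Order the vertices of \(R(\mathbf a)\) with the \(q\) core vertices first and the \(p:=|\mathbf a|\) new pendant vertices last. Let \(P\) be the \(q\times p\) incidence matrix with \(P_{i\ell}=1\) exactly when pendant vertex \(\ell\) is attached to \(v_i\). Since pendant vertices are pairwise non-adjacent, we get
\[
I-xA(R(\mathbf a))=\begin{pmatrix} I_q-xA_R & -xP\\ -xP^{\mathsf T} & I_p\end{pmatrix}.
\]
The lower-right block \(E=I_p\) is invertible, so Lemma~\ref{lem:schur-complement} applies. The key identities are \(PP^{\mathsf T}=D_{\mathbf a}\), because each pendant vertex has a unique neighbour, and \(P\mathbf 1_p=\mathbf a\). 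The Schur complement is therefore
\[
B-CE^{-1}D=I_q-xA_R-x^2PP^{\mathsf T}=M_{R,\mathbf a}(x).
\]
The determinant part of Lemma~\ref{lem:schur-complement} then gives \(\Delta_{R(\mathbf a)}(x)=\det M_{R,\mathbf a}(x)\) directly.

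For \(F\), I would work over the field of rational functions in \(x\), or with formal power series. There \(\det M_{R,\mathbf a}(x)\) has constant term \(1\), so \(M:=M_{R,\mathbf a}(x)\) and the full matrix are invertible. Rather than quoting only the upper-left block from Lemma~\ref{lem:schur-complement}, I would use the full block inverse
\[
\begin{pmatrix} M^{-1} & xM^{-1}P\\ xP^{\mathsf T}M^{-1} & I_p+x^2P^{\mathsf T}M^{-1}P\end{pmatrix}.
\]
This can be checked directly by multiplication, using \(M=I_q-xA_R-x^2PP^{\mathsf T}\). Sandwiching between \(\mathbf 1=(\mathbf 1_q,\mathbf 1_p)\) and substituting \(P\mathbf 1_p=\mathbf a\) gives
\[
F=\mathbf 1_q^{\mathsf T}M^{-1}\mathbf 1_q+2x\,\mathbf a^{\mathsf T}M^{-1}\mathbf 1_q+p+x^2\mathbf a^{\mathsf T}M^{-1}\mathbf a .
\]
Since \(M\) is symmetric, this equals \(|\mathbf a|+u_{\mathbf a}^{\mathsf T}M^{-1}u_{\mathbf a}\).

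Finally, multiply by \(\Delta_{R(\mathbf a)}=\det M\) and use \(\det M\cdot M^{-1}=\operatorname{adj}M\) to obtain \(N_{R(\mathbf a)}=\Delta F\), which is the stated formula. The main obstacle is only bookkeeping. The block-inverse verification and the symmetry step that merges the cross terms into \(u_{\mathbf a}^{\mathsf T}M^{-1}u_{\mathbf a}\) are where sign errors could enter, and one should note that the identities hold formally since \(M(0)=I_q\).
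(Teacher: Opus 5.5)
Your proposal is correct and matches the paper's proof: the same core-first vertex ordering, the same Schur complement against the identity block using $PP^{\mathsf T}=D_{\mathbf a}$ and $P\mathbf 1_p=\mathbf a$, and the same passage to $N$ via the adjugate. The only difference is cosmetic. The paper solves the block system for $(y,s)$ by eliminating $s=\mathbf 1_L+xC^{\mathsf T}y$, which gives $u_{\mathbf a}^{\mathsf T}M^{-1}u_{\mathbf a}$ directly, without writing the full block inverse or using symmetry of $M$ to merge your cross terms.
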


\begin{proof}
Let \(L:=|\mathbf{a}|\), and order the vertices of \(R(\mathbf{a})\) by listing first the \(q\) core vertices and then the \(L\) pendant vertices. With respect to this ordering, the adjacency matrix has the block form
\[
A(R(a))=
\begin{pmatrix}
A_R & C\\
C^T & 0
\end{pmatrix}.
\]
Here $C$ is the $q\times L$ incidence matrix between the core vertices and the pendant vertices. Since \(v_i\) has exactly \(a_i\) pendant neighbours, $CC^{\mathsf T}=D_\mathbf{a}, C\mathbf{1}_L=\mathbf{a}$. Hence
\[
    I-xA(R(\mathbf{a}))
      =
      \begin{pmatrix}
          I_q-xA_R&-xC\\
          -xC^{\mathsf T}&I_L
      \end{pmatrix}.
\]
Applying Lemma~\ref{lem:schur-complement} to the lower-right block \(I_L\) gives $\Delta_{R(\mathbf{a})}(x)=\det\bigl(I_q-xA_R-x^2CC^{\mathsf T}\bigr)=\det\bigl(I_q-xA_R-x^2D_\mathbf{a}\bigr)= \det M_{R,\mathbf{a}}(x)$, which proves the first determinant formula.

To obtain the total-walk generating function, write
\[
    \begin{pmatrix}
        y\\ s
    \end{pmatrix}
      :=
      \bigl(I-xA(R(\mathbf{a}))\bigr)^{-1}
      \begin{pmatrix}
          \mathbf{1}_q\\
          \mathbf{1}_L
      \end{pmatrix}.
\]
The pendant block equation gives $s=\mathbf{1}_L+xC^{\mathsf T}y$. Substitution into the core block equation yields $\bigl(I_q-xA_R-x^2CC^{\mathsf T}\bigr)y=\mathbf{1}_q+xC\mathbf{1}_L$. Using the identities $CC^{\mathsf T}=D_\mathbf{a}, C\mathbf{1}_L=\mathbf{a}$, we obtain $M_{R,\mathbf{a}}(x)y=u_\mathbf{a}(x)$, and hence $y=M_{R,\mathbf{a}}(x)^{-1}u_\mathbf{a}(x)$. Summing the entries of \(y\) and \(s\), we find $F_{R(\mathbf{a})}(x)=\mathbf{1}_q^{\mathsf T}y+\mathbf{1}_L^{\mathsf T}s=|\mathbf{a}|+ \bigl(\mathbf{1}_q^{\mathsf T}+x\mathbf{a}^{\mathsf T}\bigr)y=|\mathbf{a}|+ u_\mathbf{a}(x)^{\mathsf T} M_{R,\mathbf{a}}(x)^{-1}u_\mathbf{a}(x)$, which proves the formula for \(F_{R(\mathbf{a})}(x)\).
               
Finally, substituting $M_{R,\mathbf{a}}(x)^{-1}=\operatorname{adj}(M_{R,\mathbf{a}}(x))/\det M_{R,\mathbf{a}}(x)$ into expression for \(F_{R(\mathbf{a})}(x)\), and using $F_G(x) =N_G(x)/\Delta_G(x)$ and the formulas for \(\Delta_{R(a)}(x)\), gives the expression for \(N_{R(\mathbf{a})}(x)\).
\end{proof}

For convenience, we shall occasionally write $\Delta_{R,\mathbf{a}}(x):=\det M_{R,\mathbf{a}}(x)$ and
\begin{equation}\label{eq:compressed-N}
    N_{R,\mathbf{a}}(x)
      :=
      |\mathbf{a}|\Delta_{R,\mathbf{a}}(x)
      +
      u_\mathbf{a}(x)^{\mathsf T}
      \operatorname{adj}\bigl(M_{R,\mathbf{a}}(x)\bigr)
      u_\mathbf{a}(x).
\end{equation}
Proposition~\ref{prop:fixed-core-compression} then states that $\bigl(\Delta_{R(\mathbf{a})},N_{R(\mathbf{a})}\bigr) = \bigl(\Delta_{R,\mathbf{a}},N_{R,\mathbf{a}}\bigr)$.
Thus, once the core \(R\) is fixed, the spectral-walk data of \(R(\mathbf{a})\) are encoded by two explicit polynomials in \(x\) whose coefficients are integer polynomials in the pendant parameters.

% =========================================================
\section{Trees with matching number at most 4}\label{sec:matching-at-most-four}
% =========================================================

We dispose of the cases \(\nu(T)\leq3\) before turning to the main case \(\nu(T)=4\). The arguments also illustrate the reconstruction mechanisms used later: recovery from elementary symmetric functions and recovery from explicit data-determined products.

\subsection{Matching number at most 2}

For positive integers \(a,b\), let \(D(a,b)\) be obtained from \(K_2\) by attaching \(a\) and \(b\) pendant vertices to its two endpoints. Let \(P(a,b)\) be obtained from \(P_3\) by attaching \(a\) and \(b\) pendant vertices to its two endpoints.

\begin{proposition}\label{prop:matching-at-most-two}
A tree with matching number 2 belongs, up to interchanging \(a,b\), to exactly one of the families $D(a,b)$, $P(a,b)$, $a,b\in\mathbb Z_{>0}$. Moreover, among trees with matching number at most 2, the polynomial pair \((\Delta_T,N_T)\) determines \(T\) up to isomorphism.
\end{proposition}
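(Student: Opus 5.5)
The plan has two parts: a structural classification via Corollary~\ref{cor:finite-family-reduction} with \(t=2\), followed by an explicit computation of \((\Delta_T,N_T)\) via Proposition~\ref{prop:fixed-core-compression}.

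\emph{Classification.} By Lemma~\ref{lem:bounded-core}, the core has at most \(3\) vertices, so \(R\in\{K_1,K_2,P_3\}\). By Lemma~\ref{lem:support-bound}, \(|S|\le 2\), and \(L(R)\subseteq S\).
\begin{itemize}
\item If \(R=K_1\), then \(T\) is a star with \(\nu=1\), so this case does not occur.
\item If \(R=K_2\), then \(S\) is both endpoints, giving \(D(a,b)\) with \(\nu=2\).
\item If \(R=P_3\), then \(S\) must contain both leaves of \(P_3\). Since \(|S|\le2\), \(S\) is exactly the two leaves, giving \(P(a,b)\). Lemma~\ref{lem:matching-support-formula} confirms \(\nu=2\) (take \(X=S\), or \(X\) a single leaf plus the remaining edge).
\end{itemize}
The two families are distinguished by their cores (orders \(a+b+2\) versus \(a+b+3\) relative to \(a+b\)), and within each family the core automorphism swaps \(a,b\). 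This gives exactly one family, up to interchanging \(a,b\).

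\emph{Determination.} Trees with \(\nu\le1\) are \(K_1\), \(K_2\) and stars \(K_{1,n-1}\), and these are determined by \(n=N_T(0)\) together with \(\nu=\deg\Delta_T/2\). Trees with \(\nu=2\) are separated from them by \(\deg\Delta_T=4\). It remains to compute, for each family, \(\Delta_T=1-m_1x^2+m_2x^4\) and enough of \(N_T\) to recover \(\{a,b\}\) and the family. For \(D(a,b)\) I would use \(M=\begin{pmatrix}1-ax^2&-x\\-x&1-bx^2\end{pmatrix}\), and similarly the \(3\times3\) matrix for \(P(a,b)\). From \(\Delta\) one reads off:
\begin{itemize}
\item for \(D(a,b)\): \(n=a+b+2\), \(m_1=a+b+1\), \(m_2=ab\);
\item for \(P(a,b)\): \(n=a+b+3\), \(m_1=a+b+2\), \(m_2=ab+a+b\).
\end{itemize}
The key invariant for telling the families apart is \(n-m_1\), which equals \(1\) for every tree. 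The spectrum alone therefore may not separate the families: \(D(a,b)\) and \(P(c,d)\) are cospectral when \(a+b=c+d+1\) and \(ab=cd+c+d\). For this reason the walk data are needed, and I would use \(W_2=\sum\deg^2\) (equivalently \(W_2\) as recorded via \(N_T\)) together with \(W_3\).
\begin{itemize}
\item For \(D(a,b)\): \(\sum\deg^2=a+b+(a+1)^2+(b+1)^2\).
\item For \(P(a,b)\): \(\sum\deg^2=a+b+(a+1)^2+(b+1)^2+4\).
\end{itemize}
Since \(W_2\) is spectral for trees, I would instead use \(W_3=\sum_{uv\in E}2\deg u\deg v\), or equivalently compare low coefficients of \(N_T\).

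\emph{Main obstacle.} The crux is separating a \(D\)-tree from a \(P\)-tree with equal \(\Delta\). Substituting \(a+b=s+1\) and \(ab=p+s\), where \(s=c+d\) and \(p=cd\), and computing \(W_3\) for both trees, I expect the difference to be a nonzero expression in \(s,p\). If a coincidence survives, I would fall back on \(W_4\) or directly compare \(N_T\) coefficients, which are finitely many polynomial identities in the parameters.

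Within a single family, \(\Delta\) gives \(a+b\) and \(ab\), hence \(\{a,b\}\) as the roots of a quadratic. This is the ``recovery from elementary symmetric functions'' mechanism, and it finishes the proof.
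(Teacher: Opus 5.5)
Your route is the paper's route. You use the same core--support classification via Corollary~\ref{cor:finite-family-reduction} and the same within-family recovery of \(\{a,b\}\) from \(\Delta_T\) by Vieta. The invariant you pick to separate the two families is also effectively the paper's. The paper uses \(\vartheta(T)=\eta_3-\eta_2\). Since \(N_T=\Delta_T F_T\) with \(\Delta_T=1-(n-1)x^2+m_2x^4\), one has \(\eta_3=W_3-2(n-1)^2\) and \(\eta_2=-2m_2\). Hence \(\vartheta=W_3-2(n-1)^2+2m_2\), and under equal \(\Delta\) comparing \(\vartheta\) is exactly comparing \(W_3\). The paper then observes \(\vartheta(D(a,b))=0\) and \(\vartheta(P(a,b))=-2ab<0\). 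This is a family-intrinsic sign test, so it does not need to pair up trees.

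The one incomplete point is the crux you left as an expectation, but it closes in one line and no fallback to \(W_4\) is needed. From \(W_3=2\sum_{uv\in E}\deg u\deg v\) you get
\[
W_3(D(a,b))=2\bigl((a+b)^2-ab+2(a+b)+1\bigr)
\]
and
\[
W_3(P(c,d))=2\bigl((c+d)^2-2cd+3(c+d)+4\bigr).
\]
Substituting \(a+b=s+1\) and \(ab=p+s\) gives \(W_3(D)-W_3(P)=2p=2cd>0\).

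Also drop the sentence calling \(n-m_1\) the key invariant. It is identically \(1\) for trees and separates nothing. Your remark that the spectrum alone may fail is correct: \(D(3,3)\) and \(P(1,4)\) are cospectral, both with \(n=8\) and \(m_2=9\), and their \(W_3\) values are \(80\) and \(72\).
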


\begin{proof}
Let \(T=R(\mathbf{a})\) be its canonical core representation. By Corollary~\ref{cor:finite-family-reduction}, $|V(R)|\leq3$, $|\operatorname{supp}(\mathbf{a})|\leq2$, and $L(R)\subseteq\operatorname{supp}(\mathbf{a})$. A one-vertex core gives a star and hence matching number 1. If \(R=K_2\), both core vertices lie in the support and \(T=D(a,b)\).  If \(R=P_3\), its two leaves lie in the support and the support bound excludes the middle vertex, giving \(T=P(a,b)\). The fixed-core formula gives
$\Delta_{D(a,b)}(x)=1-(a+b+1)x^2+abx^4$, $\Delta_{P(a,b)}(x)=1-(a+b+2)x^2+(ab+a+b)x^4$.
      
Define $\vartheta(T):=[x^3]N_T(x)-[x^2]N_T(x)$. Direct substitution in Proposition~\ref{prop:fixed-core-compression} gives $\vartheta(D(a,b))=0$, $\vartheta(P(a,b))=-2ab<0$.
Thus the two families are distinguished. Within either family, \(\Delta_T\) determines \(a+b\) and \(ab\), so \(\{a,b\}\) is recovered as the root multiset of $z^2-(a+b)z+ab$. Interchanging the roots is induced by the reflection of the core. To complete the proof of the second assertion, suppose that two trees \(T,T'\) with matching number at most \(2\) have the same polynomial pair. Then \(\Delta_T=\Delta_{T'}\), and Corollary~\ref{cor:rank-matching} gives \(\nu(T)=\nu(T').\) The cases \(\nu(T)=0,1\) are immediate: \(T=K_1\) when \(\nu(T)=0\), and \(T\) is a star when \(\nu(T)=1\), whose order determines it.
\end{proof}

\subsection{Matching number 3}
For the path $P_q=v_1v_2\cdots v_q$, write \(P_q(a_1,\ldots,a_q)\) for the tree obtained by attaching \(a_i\) pendant vertices to \(v_i\). We similarly write
\(K_{1,3}(r;a,b,c)\) when \(r\) leaves are attached to the centre of \(K_{1,3}\) and \(a,b,c\) leaves are attached to its three leaves.

\begin{lemma}\label{lem:six-matching-three-families}
Every tree with matching number 3 belongs, up to a core automorphism, to exactly one of the following families:
\[
\begin{array}{c|l}
T_1 & P_3(a,b,c)\\
T_2 & P_4(a,0,0,d)\\
T_3 & P_4(a,b,0,d)\\
T_4 & K_{1,3}(0;a,b,c)\\
T_5 & P_5(a,0,0,0,e)\\
T_6 & P_5(a,0,c,0,e),
\end{array}
\qquad
a,b,c,d,e\in\mathbb Z_{>0}.
\]
\end{lemma}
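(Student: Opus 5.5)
We use the finite-core reduction of Corollary~\ref{cor:finite-family-reduction} with \(t=3\). A tree \(T\) with \(\nu(T)=3\) has at least \(3\) vertices, so \(T=R(\mathbf a)\) with core \(R=R(T)\) satisfying \(|V(R)|\le 5\), support \(S=\operatorname{supp}(\mathbf a)\) with \(|S|\le 3\), \(L(R)\subseteq S\), and \(\mu_R(S)=3\). Since \(|L(R)|\le|S|\le 3\), the core has at most three leaves. The plan is to list all trees on at most five vertices with at most three leaves and, for each, all admissible supports up to \(\operatorname{Aut}(R)\), computing \(\mu_R(S)\) via Lemma~\ref{lem:matching-support-formula}.

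The cores to examine are \(K_1\), \(K_2\), \(P_3\), \(P_4\), \(K_{1,3}\), \(P_5\), and the spider obtained from \(K_{1,3}\) by subdividing one edge. (\(K_{1,4}\) has four leaves and is excluded.) A useful bound is \(\mu_R(S)\le\min\{|S|+\nu(R),\,|V(R)|\}\), since \(|X|+\nu(R-X)\le |X|+(|V(R)|-|X|)/2\). Also \(\mu_R(S)\le\lfloor (|V(R)|+|S|)/2\rfloor\). We then go through the cores in turn.

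\(K_1\), \(K_2\): these give \(\mu\le 2\), so they are excluded.

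\(P_3\): the support must be all three vertices, giving \(\mu=3\). This is \(T_1\).

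\(P_4\): the support contains \(v_1,v_4\). With only these, \(X=\{v_1,v_4\}\) gives \(2+\nu(P_2)=3\), which is \(T_2\). Adding one inner vertex (up to reflection, \(v_2\)) gives \(T_3\); \(\mu\le 4\) while \(|S|=3\) and \(\nu(R-X)\) is small, so a quick check gives \(\mu=3\).

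\(K_{1,3}\): the three leaves fill \(S\). Here \(X=S\) gives \(3+0=3\), which is \(T_4\), and \(\mu\le(4+3)/2\) forces \(\mu=3\).

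\(P_5\): \(S\supseteq\{v_1,v_5\}\). With \(S=\{v_1,v_5\}\), \(X=S\) leaves \(P_3\), so \(\mu=3\); this is \(T_5\). A third support vertex \(v_3\) gives \(\mu\le\lfloor 8/2\rfloor=4\); we must verify it is \(3\). Taking \(X=\{v_1,v_3,v_5\}\) leaves isolated vertices, giving \(3\), and the other choices also give \(3\). This is \(T_6\). A third vertex \(v_2\) (equivalently \(v_4\)) gives \(X=\{v_1,v_2,v_5\}\) with \(R-X=\{v_3v_4\}\), so \(\mu=4\), and it is excluded.

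Spider (center \(c\), legs \(c\ell_1\), \(c\ell_2\), \(c u w\)): the support is the three leaves \(\ell_1,\ell_2,w\). Taking \(X=S\) leaves the edge \(cu\), so \(\mu=4\), and the spider is excluded.

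Finally, distinctness ("exactly one") follows because the canonical core \(R(T)\) and the support up to \(\operatorname{Aut}(R)\) are intrinsic to \(T\). The six families have pairwise non-isomorphic cores, or the same core with supports in different \(\operatorname{Aut}\)-orbits (\(T_2\) versus \(T_3\), \(T_5\) versus \(T_6\)). The positivity of the listed parameters matches \(S\) exactly.

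The main obstacle is exhaustiveness: making sure no core on five vertices with at most three leaves, and no support orbit, is overlooked. I would organize the check by \(|V(R)|\) and use the bound \(\mu_R(S)\le\lfloor(|V(R)|+|S|)/2\rfloor\) to dispatch cases quickly.
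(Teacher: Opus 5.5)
Your proposal is correct and follows essentially the same route as the paper: you enumerate cores of order at most $5$ with at most three leaves, force the leaves of $R$ into $S$, and exclude $S(2,1,1)$, $K_{1,4}$ and the $P_5$-support $\{v_1,v_2,v_5\}$ via a matching of size $4$. The only difference is cosmetic: you certify $\nu=3$ for the six surviving families through $\mu_R(S)$ and the bound $\mu_R(S)\le\lfloor(|V(R)|+|S|)/2\rfloor$, whereas the paper exhibits explicit matchings and vertex covers of size $3$. Note that this bound already gives $\mu\le 3$ for $T_3$, so your ``$\mu\le 4$ plus a quick check'' can be replaced by a one-line argument.
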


\begin{proof}
Let \(T=R(\mathbf a)\), put \(q=|V(R)|\), and let \(S=\operatorname{supp}(\mathbf a)\). Since every edge of \(T\) is incident with a vertex of \(R\), every matching in \(T\) uses distinct vertices of \(R\). Hence \(\nu(T)\le q\). By Lemma~\ref{lem:bounded-core}, \(q\le 2\nu(T)-1=5.\) Since \(\nu(T)=3\le q\), we obtain \(3\le q\le5.\) Moreover, \(|S|\le3\) and \(L(R)\subseteq S\).

For \(q=3\), the core is \(P_3=v_1v_2v_3\). Since \(L(R)\subseteq S\), both endvertices \(v_1,v_3\) belong to \(S\). If \(v_2\notin S\), then \(S=\{v_1,v_3\}\), and the matching formula $\nu(R(\mathbf a))=\max_{X\subseteq S}\bigl(|X|+\nu(R-X)\bigr)$ gives \(\nu(T)=2\), a contradiction. Hence \(S=V(R)\), giving \(T_1=P_3(a,b,c)\) with \(a,b,c>0\).

For \(q=4\), the core is \(P_4\) or \(K_{1,3}\). On \(P_4\), the two endvertices belong to \(S\), and at most one internal vertex may also belong to \(S\), giving \(T_2,T_3\) up to reflection. On \(K_{1,3}\), the three leaves belong to \(S\). Since \(|S|\le3\), the center does not belong to \(S\), giving \(T_4\).

For \(q=5\), the core \(K_{1,4}\) is excluded because it has four leaves. The tree of degree sequence \((3,2,1,1,1)\) is excluded because pendant edges at its three leaves, together with the edge joining its two non-leaf vertices, form a matching of size 4. Hence \(R=P_5\). Its endvertices belong to \(S\); neither \(v_2\) nor \(v_4\) may belong to \(S\), since this would again produce a matching of size 4. This leaves \(T_5,T_6\).

The displayed support vertices give matchings and vertex covers of size 3 in \(T_1,T_3,T_4,T_6\). For \(T_2,T_5\), two pendant edges and one internal path edge give a matching of size 3, and a vertex cover of size 3 is immediate. Thus all six families have matching number exactly 3. Finally, the six alternatives are mutually exclusive up to core automorphisms: they are distinguished by the order and isomorphism type of the core and, for a fixed core, by the support pattern modulo its automorphisms.
\end{proof}

Let $\Delta_T(x) = 1-(n-1)x^2+\mu_2x^4-\mu_3x^6$, where $n=|V(T)|$, $\mu_2=m_2(T)$, and $\mu_3=m_3(T)>0$.

Write $\eta_j:=[x^j]N_T(x)$ and define
\begin{equation}\label{eq:nu-three-invariants}
\begin{aligned}
    \rho
      &:=-\frac{\eta_3-\eta_2}{2},
    &
    \sigma
      &:=-\eta_6,
    &
    \chi
      &:=\mu_2-\mu_3-(2n-7),\\
    \kappa
      &:=\eta_5-4\mu_3,
    &
    \lambda
      &:=\eta_5-6\mu_3+4\rho,
    &
    \omega
      &:=\sigma-4(\mu_3-\rho).
\end{aligned}
\end{equation}

\begin{theorem}\label{thm:matching-three-recovery}
Among trees with matching number 3, the polynomial pair \((\Delta_T,N_T)\) determines the tree up to isomorphism.
\end{theorem}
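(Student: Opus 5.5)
By Lemma~\ref{lem:six-matching-three-families}, every tree with matching number 3 lies in exactly one of the families $T_1,\dots,T_6$. The proof therefore splits into two tasks.

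\emph{Step 1 (separating the families).} I would show that the invariants in \eqref{eq:nu-three-invariants} tell the six families apart. These invariants are $\rho,\sigma,\chi,\kappa,\lambda,\omega$, built from $n,\mu_2,\mu_3$ and the coefficients $\eta_2,\eta_3,\eta_5,\eta_6$ of $N_T$.

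\emph{Step 2 (recovering the parameters).} Within each family, I would show that the polynomial pair determines the pendant multiplicities, modulo the automorphisms of the core.

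For both steps the tool is Proposition~\ref{prop:fixed-core-compression}. For each of the five cores $P_3,P_4,K_{1,3},P_5$ with the given support, I would compute $\Delta_{R,\mathbf a}=\det(I-xA_R-x^2D_{\mathbf a})$ and the low-order coefficients of $N_{R,\mathbf a}$ as explicit polynomials in $a,b,c,d,e$. Since $\deg\Delta_T=6$ and the rank is $6$, the first few coefficients suffice.

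\emph{The invariant $\rho$.} I expect $\rho=-(\eta_3-\eta_2)/2$ to behave like $\vartheta$ in Proposition~\ref{prop:matching-at-most-two}. There, $\vartheta(D(a,b))=0$ and $\vartheta(P(a,b))=-2ab$. In general, $\eta_3-\eta_2$ should count products $a_ia_j$ over pairs of support vertices at distance $2$ in the core, since such pairs create walks between pendant vertices. This predicts:
\begin{itemize}
\item $\rho=ab+bc$ for $T_1$;
\item $\rho=0$ for $T_2$ and $T_5$;
\item $\rho=ab+bc+ca$ for $T_4$;
\item $\rho=ac+ce$ for $T_6$;
\item something like $\rho=0$ for $T_3$, where the relevant pair $a,d$ is at distance $3$.
\end{itemize}
Higher coefficients pick up pairs at larger distances, and the combinations $\kappa,\lambda,\omega,\chi$ are presumably designed so that each vanishes identically on some families and is strictly positive or negative on the others.

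\emph{Step 1 in detail.} I would tabulate the sign pattern (zero, positive, negative) of the vector $(\rho,\chi,\kappa,\lambda,\omega)$ on $T_1,\dots,T_6$, using $a,b,c,d,e\ge1$, and check that the six patterns are pairwise distinct. Since everything is given by explicit integer polynomials, each sign claim reduces to a polynomial inequality in positive integers. Where a quantity is not sign-definite, I would use instead an exact identity, such as $\omega=0$ on one family, to separate it from the others.

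\emph{Step 2 in detail.} Within a fixed family, $n-1$, $\mu_2$ and $\mu_3$ give symmetric-type functions of the parameters; for example, for $T_4$ they give $a+b+c+3$, a quadratic, and $abc$. For the symmetric cores the goal is elementary symmetric functions:
\begin{itemize}
\item For $T_4$, with core symmetry $S_3$, obtaining $e_1,e_2,e_3$ of $\{a,b,c\}$ recovers the multiset as the roots of a cubic.
\item For $T_2$ and $T_5$, with reflection symmetry, it is enough to get $a+d$ and $ad$ (respectively $a+e$ and $ae$).
\item For $T_1$ and $T_6$, the middle parameter is fixed by the reflection, so I would isolate it first, say $b$ from $\rho=b(a+c)$ together with $a+b+c$ and $\mu_3=abc$. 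Then $\{a,c\}$ follows from a quadratic.
\end{itemize}

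\emph{Main obstacle: $T_3=P_4(a,b,0,d)$.} This family has no core symmetry, so the ordered triple $(a,b,d)$ must be recovered exactly. I would use the fact that $a,b$ lie at adjacent vertices while $d$ is at distance $2$ from $b$. The coefficients $\eta_5,\eta_6$ (through $\kappa,\sigma$) should then produce an asymmetric product such as $bd$ in addition to the symmetric data. From $a+b+d$, $\mu_2$, $\mu_3$ and one such product I would solve for $b$ by elimination and then obtain $a$ and $d$. The step I would try first is writing $\kappa$ on $T_3$ explicitly to see whether it is linear in one distinguished parameter. The genuine risk is that the data determine only a finite set of solutions rather than a unique one. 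In that case I would add a further coefficient $\eta_j$ and argue that the resulting equations have a unique positive integer solution.
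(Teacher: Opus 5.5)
Your overall route matches the paper's: substitute into Proposition~\ref{prop:fixed-core-compression}, recognise the family from the invariants, then recover the parameters from symmetric functions and products. But Step~1 as you set it up fails, because your heuristic for $\rho$ is wrong. Using $\eta_2=-2\mu_2$, $\eta_3=W_3-2(n-1)^2$ and $W_3=2\sum_{uv\in E}d_ud_v$, one gets $\rho=\mu_2-\sum_{uv\in E}(d_u-1)(d_v-1)$. This is the number of pairs of disjoint edges, pendant \emph{or} core, that are not joined by a single edge. On $T_1,\dots,T_6$ it equals
\[
\rho=ac,\quad ad+a+d,\quad ad+bd+a+d,\quad ab+bc+ca,\quad ae+2a+2e+1,\quad ac+ae+ce+2a+2e+1 .
\]
Only your $T_4$ value is right. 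Since $\rho>0$ on every family, its sign separates nothing. Several other invariants also change sign within a family: $\kappa$ on $T_4$, $\lambda$ and $\omega$ on $T_3$, and $\omega$ on $T_5$. As a result, the sign vector of $(\rho,\chi,\kappa,\lambda,\omega)$ does not distinguish families. For example, $P_3(1,1,1)$ and $K_{1,3}(0;1,1,1)$ give $(1,-1,-2,0,0)$ and $(3,-2,-4,0,0)$. Likewise, $P_5(1,0,0,0,1)$ and $P_5(1,0,1,0,1)$ give $(6,-1,0,16,8)$ and $(8,-3,0,16,4)$. So the check that ``the six patterns are pairwise distinct'' is false, and the whole separation rests on your unspecified ``exact identity'' fallback.

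The missing separator is $\sigma=-\eta_6$, which you list but never use. Since $\operatorname{rank}A(T)=6$, expanding $F_T$ spectrally gives $\sigma=\mu_3\|P_0\mathbf 1\|^2$, where $P_0$ is the orthogonal projection onto $\ker A(T)$. Hence $\sigma=0$ exactly when $\mathbf 1\perp\ker A(T)$. This holds on $T_1,T_2,T_5$ and fails on $T_3,T_4,T_6$; both example pairs above have $\sigma=0$ versus $\sigma=4$. The paper then separates by identities rather than signs:
\begin{itemize}
\item on the first group, $\eta_5=2\mu_3$ on $T_1$ versus $\eta_5=4\mu_3$ on $T_2,T_5$, and then $\chi=0$ on $T_2$ versus $\chi=-ae$ on $T_5$;
\item on the second group, $\lambda=\omega=0$ on $T_4$;
\item $T_3$ and $T_6$ have $(\lambda,\omega)\ne(0,0)$ (via $\omega-\lambda=-ad(b+2)$, respectively $\lambda>0$), and are split by $\kappa=-2ab<0$ versus $\kappa=2c(ae-1)\ge0$.
\end{itemize}

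Your Step~2 outline is fine, and the correct values make it easier than you expected:
\begin{itemize}
\item on $T_1$, $b=\mu_3/\rho$ directly;
\item on $T_4$, $\mu_3=abc+\rho$ (not $abc$), so $e_3=\mu_3-\rho$;
\item on $T_6$, $c=\mu_2-\rho-n+3$;
\item on $T_3$, the asymmetric data are $ab=-\kappa/2$, $abd=\sigma$ and $ad=\mu_3-\sigma-ab$. Then $a=(ab)(ad)/\sigma$, $b=\sigma/(ad)$ and $d=\sigma/(ab)$, with no residual ambiguity.
\end{itemize}
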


\begin{proof}
Substitution in Proposition~\ref{prop:fixed-core-compression} gives the recognition and recovery certificates in Table~\ref{tab:nu-three-certificates}.

\begin{table}[H]
\centering
\small
\setlength{\tabcolsep}{4pt}
\renewcommand{\arraystretch}{1.62}
\caption{Recognition and recovery certificates for \(\nu(T)=3\).}
\label{tab:nu-three-certificates}
\begin{tabularx}{\textwidth}{c|X|X}
\hline
Type & Recognition condition & Parameter recovery\\
\hline
\(T_1\)
&
\(\sigma=0,\ \eta_5=2\mu_3\)
&
\(
p=\rho,\;
b=\mu_3/\rho,\;
s=n-b-3;
\)
\(\{a,c\}\) are the roots of \(z^2-sz+p\).
\\

\(T_2\)
&
\(\sigma=0,\ \eta_5=4\mu_3,\ \chi=0\)
&
\(
s=n-4,\ p=\mu_3;
\)
\(\{a,d\}\) are the roots of\(z^2-sz+p\).
\\

\(T_5\)
&
\(\sigma=0,\ \eta_5=4\mu_3,\ \chi<0\)
&
\(
s=n-5,\ p=(\mu_3-s)/2;
\)
\(\{a,e\}\) are the roots of
\(z^2-sz+p\).
\\

\(T_4\)
&
\(\sigma>0,\ \lambda=\omega=0\)
&
\(
e_1=n-4,\ e_2=\rho,\ e_3=\mu_3-\rho;
\)
\(\{a,b,c\}\) are the roots of
\(z^3-e_1z^2+e_2z-e_3\).
\\

\(T_3\)
&
\(
\sigma>0,\;
(\lambda,\omega)\neq(0,0),\;
\kappa<0
\)
&
\(
u=-\kappa/2=ab,\;
v=\mu_3-\sigma-u=ad;
\)

\(
    a=uv/\sigma,
    \qquad
    b=\sigma/v,
    \qquad
    d=\sigma/u.
\)
\\

\(T_6\)
&
\(
\sigma>0,\;
(\lambda,\omega)\neq(0,0),\;
\kappa\geq0
\)
&

$c=\mu_2-\rho-n+3, s=n-c-5, p=\rho-cs-2s-1;$

\(\{a,e\}\) are the roots of
\(z^2-sz+p\).
\\
\hline
\end{tabularx}
\end{table}

For completeness, the sign assertions used in the recognition column follow from
\[
\begin{array}{c|c}
\text{Families}&\text{Certificate}\\
\hline
T_1,T_2,T_5&\ \sigma=0,\\
T_1&\eta_5=2\mu_3,\\
T_2,T_5&\eta_5=4\mu_3,\\
T_2&\chi=0,\\
T_5&\chi=-ae<0,\\
T_3&\kappa=-2ab<0,\quad
      \omega-\lambda=-ad(b+2)<0,\\
T_4&\lambda=\omega=0,\\
T_6&\kappa=2c(ae-1)\geq0,\quad
      \lambda=2\bigl(c(a+e-2)+3(a+e)+2\bigr)>0.
\end{array}
\]
Hence the six recognition conditions are mutually exclusive and exhaustive.

The recovery formulas in the last column determine the pendant parameters up to exactly the automorphisms of the corresponding cores: path reflection in \(T_1,T_2,T_5,T_6\), permutation of the three leaves in \(T_4\), and the identity in \(T_3\).  Therefore the tree is uniquely determined up to isomorphism.
\end{proof}

\subsection{Matching number 4}\label{sec:matching-four-families}
We now assume throughout that $\nu(T)=4$. The core reduction of Section~\ref{sec:core-reduction} turns the problem into a finite collection of parametric families. For positive integers \(r,s,t\), let \(S(r,s,t)\) denote the tree obtained by identifying one endpoint of three vertex-disjoint paths of lengths \(r,s,t\). Let $V(R_i)=\{v_1,\ldots,v_{q_i}\}$. We abbreviate a support $\{v_{i_1},\ldots,v_{i_r}\}$ by \(i_1\cdots i_r\). Every parameter corresponding to a displayed support vertex is positive, and all remaining pendant parameters are zero.

\begin{proposition}\label{prop:matching-four-classification}
Let \(T=R(\mathbf{a})\) be the canonical core representation of a tree with \(\nu(T)=4\). Up to an automorphism of \(R\), the pair $\bigl(R,\operatorname{supp}(\mathbf{a})\bigr)$ is one of the 23 entries in Table~\ref{tab:matching-four-families}.
\end{proposition}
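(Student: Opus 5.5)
By Corollary~\ref{cor:finite-family-reduction} with \(t=4\), the pair \((R,S)\) satisfies \(|V(R)|\le 7\), \(|S|\le 4\), \(L(R)\subseteq S\) and \(\mu_R(S)=4\). As in the proof of Lemma~\ref{lem:six-matching-three-families}, every matching of \(T\) uses distinct core vertices, so \(4=\nu(T)\le q:=|V(R)|\). Hence \(4\le q\le 7\). The plan is a finite enumeration. First list all trees \(R\) of order \(4,\dots,7\) with at most four leaves, since the condition \(L(R)\subseteq S\), \(|S|\le4\) forces \(|L(R)|\le 4\). For each such \(R\), run over the supports \(S\) with \(L(R)\subseteq S\), \(|S|\le 4\), taken modulo \(\operatorname{Aut}(R)\). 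Keep exactly those with \(\mu_R(S)=4\), computed via Lemma~\ref{lem:matching-support-formula}.

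To keep the check of \(\mu_R(S)=4\) short, I would use two pruning rules, applied separately in each direction.
\begin{enumerate}[label=(\roman*)]
\item \emph{Excluding pairs with \(\mu_R(S)\ge5\).} Exhibit a set \(X\subseteq S\) with \(|X|+\nu(R-X)\ge5\). Typically \(X\) is a set of support vertices whose removal leaves two disjoint edges, or one edge together with three or four support leaves.
\item \emph{Excluding pairs with \(\mu_R(S)\le3\).} Exhibit a vertex cover of \(R(\mathbf a)\) of size \(3\). By Lemma~\ref{lem:konig}, such a cover forces \(\nu\le 3\). The cover must contain every support vertex that is not isolated, so it is essentially \(S\) together with a few core vertices covering the edges of \(R-S\).
\end{enumerate}
Symmetrically, each surviving entry is certified to have matching number exactly \(4\) by an explicit matching of size \(4\) and a vertex cover of size \(4\).

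I would organise the cases by \(q\).
\begin{itemize}
\item For \(q=4\), \(S=V(R)\) is forced, since otherwise the bound \(\mu_R(S)\le q-1\) rules it out. This gives \(P_4\) and \(K_{1,3}\) with full support.
\item For \(q=5\), the cores are \(P_5\), the spider \(S(1,1,2)\) (degree sequence \((3,2,1,1,1)\)) and \(K_{1,4}\). Their leaves are forced into \(S\), and the remaining support slots are distributed among the internal vertices.
\item For \(q=6\) and \(q=7\), Lemma~\ref{lem:bounded-core} is nearly tight. In its proof, \(r=q-4\) independent non-leaf core vertices each have exactly two neighbours in the cover \(C\), and \(|C|=4\). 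This rigidly constrains \(R\): for \(q=7\), \(R\) is obtained from a 4-vertex cover set joined by three degree-2 connectors, giving paths and spiders such as \(P_7\) and the \(S(r,s,t)\) trees.
\end{itemize}
I would exploit this structure to list the \(q=6,7\) cores directly rather than enumerating all trees of those orders.

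The main obstacle is bookkeeping: guaranteeing exhaustiveness and that no two entries are \(\operatorname{Aut}(R)\)-equivalent. This matters most for \(q=6\), where there are several cores (\(P_6\), various spiders, double stars) and many candidate supports. For this step I would first fix the cover structure from Lemma~\ref{lem:bounded-core}. I would then observe that \(S\) must contain all leaves of \(R\), and that adding any internal vertex adjacent to an unmatched region raises \(\mu\) to \(5\). Together these should cut the candidates to a short list per core that can be checked by hand, yielding the 23 entries of Table~\ref{tab:matching-four-families}. Distinctness of the entries follows from the isomorphism type of the core and the orbit of the support under \(\operatorname{Aut}(R)\).
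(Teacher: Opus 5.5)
Your strategy is viable, and its second half matches the paper's. For each core the paper also enumerates supports $S\supseteq L(R)$ with $|S|\le 4$ modulo $\operatorname{Aut}(R)$ and keeps those with $\mu_R(S)=4$ (Table~\ref{tab:matching-four-candidates}). The difference is in how the cores are generated. The paper fixes a minimum vertex cover $C$ of size $4$ containing no pendant vertex. It rewrites $|E(R)|=|V(R)|-1$ as $d+\sum_{W_I}(|I|-1)=3$, where $d$ counts edges inside $C$ and each non-cover core vertex $u$ contributes $\deg_R(u)-1$. The weight partitions $3$, $2+1$, $1+1+1$ then give all ten cores for every order at once, and no unrealizable tree ever has to be discarded. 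You instead list all trees of orders $4$ and $5$ by hand and use the cover structure only at $q=6,7$. That works, but everything then rests on doing the structural step correctly.

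As written, that step has a concrete error. The claim that the $q-4$ non-cover core vertices each have exactly two neighbours in $C$ holds only at $q=7$. Let $e(C)$ be the number of edges inside $C$. The correct count is $e(C)+\sum_u \deg_R(u)=q-1$, with every $\deg_R(u)\ge 2$, since all neighbours of such $u$ lie in $C$.
\begin{itemize}
\item At $q=7$ this forces $e(C)=0$ and all degrees equal to $2$. So $R$ is a once-subdivided $P_4$ or $K_{1,3}$, that is, exactly $P_7$ or $S(2,2,2)$. State this rather than ``spiders such as''.
\item At $q=6$ there is one unit of slack. Either $e(C)=1$ with two degree-$2$ connectors, giving $P_6$ and $S(2,2,1)$, or $e(C)=0$ with connector degrees $3$ and $2$, giving $S(3,1,1)=R_2$.
\end{itemize}
If you list the $q=6$ cores from your stated structure, you lose $R_2$ and its two supports $234$ and $1234$, and end with $21$ families rather than $23$. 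The double stars you mention never arise from the cover structure. If you enumerate all six-vertex trees instead, the double star and $S(2,1,1,1)$ each have four leaves, so $S=L(R)$ and $R-S$ still contains an edge, giving $\mu_R(S)=5$.

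Separately, both of your pruning rules reduce to one identity. Deleting a vertex lowers the matching number by at most one, so $\nu(R-X)\le \nu(R-S)+|S\setminus X|$ for every $X\subseteq S$. Hence $\mu_R(S)=|S|+\nu(R-S)$, and each check is a single matching computation on the forest $R-S$.
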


\vspace{-20pt}
\begin{table}[H]
\centering
\small
\setlength{\tabcolsep}{3.5pt}
\renewcommand{\arraystretch}{1.25}
\caption{The 10 canonical cores and the 23 admissible
support orbits for \(\nu(T)=4\).  An expression \(ij\) in the edge
column denotes the edge \(v_iv_j\).}
\label{tab:matching-four-families}
\begin{tabularx}{\textwidth}{c|c|X|X}
\hline
Core & Shape & Edge set & Admissible supports\\
\hline
\(R_1\)
&
\(K_{1,4}\)
&
\(15,25,35,45\)
&
\(1234\)
\\

\(R_2\)
&
\(S(3,1,1)\)
&
\(15,25,16,36,46\)
&
\(234,\ 1234\)
\\

\(R_3\)
&
\(S(2,2,2)\)
&
\(15,25,16,36,17,47\)
&
\(234,\ 1234\)
\\

\(R_4\)
&
\(P_7\)
&
\(15,25,16,36,27,47\)
&
\(34,\ 134,\ 1234\)
\\

\(R_5\)
&
\(S(2,1,1)\)
&
\(12,15,35,45\)
&
\(234,\ 1234,\ 2345\)
\\

\(R_6\)
&
\(S(2,2,1)\)
&
\(12,15,35,16,46\)
&
\(234,\ 1234\)
\\

\(R_7\)
&
\(P_6\)
&
\(12,15,35,26,46\)
&
\(34,\ 134,\ 345,\ 1234,\ 1346\)
\\

\(R_8\)
&
\(P_5\)
&
\(12,13,25,45\)
&
\(134,\ 1234,\ 1345\)
\\

\(R_9\)
&
\(K_{1,3}\)
&
\(12,13,14\)
&
\(1234\)
\\

\(R_{10}\)
&
\(P_4\)
&
\(12,13,24\)
&
\(1234\)
\\
\hline
\end{tabularx}
\end{table}

\begin{proof}
By Lemma~\ref{lem:konig}, \(T\) has a minimum vertex cover of cardinality 4. We may choose such a cover \(C=\{c_1,c_2,c_3,c_4\}\) containing no pendant vertex. Indeed, if a pendant vertex \(v\) belongs to a minimum vertex cover \(C\), with unique neighbour \(u\), then \(u\notin C\); otherwise \(C\setminus\{v\}\) would still be a vertex cover. Hence replacing \(v\) by \(u\) preserves both the cardinality and the vertex-cover property. Since \(\nu(T)=4\), the tree \(T\) is not \(K_2\), and thus \(u\) is not pendant. Put $U:=V(T)\setminus C$, $U_2:=\{u\in U:\deg_T(u)\geq2\}$. Since \(C\) is a vertex cover, \(U\) is independent. As \(C\) contains no pendant vertex, \(U\setminus U_2\) is precisely the set of pendant vertices of \(T\). Hence \(R(T)=T[C\cup U_2]\). 

We encode the structure of \(R(T)\) relative to the fixed cover \(C=\{c_1,c_2,c_3,c_4\}\) by two kinds of blocks.  Here the word ``block'' is used only as a convenient encoding device. An edge \(c_ic_j\) will be called a \(D_{ij}\)-block. For each \(u\in U_2\), let \(I(u):=\{i\in\{1,2,3,4\}:u\sim_T c_i\}.\) Since \(U\) is independent, every neighbour of \(u\) lies in \(C\); hence \(N_T(u)=\{c_i:i\in I(u)\}.\) Moreover, \(|I(u)|=\deg_T(u)\ge2.\) We represent such a vertex \(u\) together with its incident edges to
\(C\) by a \(W_{I(u)}\)-block.

Assign weight \(1\) to each \(D\)-block and weight \(|I|-1\) to each \(W_I\)-block. Let \(d\) be the number of \(D\)-blocks and let \(r\) be the number of \(W\)-blocks. Each \(W_I\)-block contributes one vertex and \(|I|\) edges to \(R(T)\), whereas each \(D\)-block contributes one edge. Thus \( |V(R(T))|=4+r, |E(R(T))|=d+\sum_{W_I}|I|.\)
Since \(R(T)\) is a tree, \(|E(R(T))|=|V(R(T))|-1,\) and therefore \(d+\sum_{W_I}(|I|-1)=3.\) Hence the multiset of block weights is one of \(3, 2+1, 1+1+1.\)

If the weight partition is \(3\), there is a single block, necessarily a \(W_I\)-block with \(|I|=4\). Hence \(I=\{1,2,3,4\}\), and \(R(T)\cong K_{1,4}.\)

Suppose next that the weight partition is \(2+1\). The weight-two block must be a ternary \(W\)-block. After relabelling \(c_1,c_2,c_3,c_4\), we may take it to be \(W_{123}\). The remaining weight-one block must involve \(c_4\), since otherwise \(c_4\) would be isolated. Since \(c_1,c_2,c_3\) are equivalent under the symmetry of the ternary block, the remaining block is, up to relabelling, \(W_{14}\) or \(D_{14}\). These two possibilities give $R(T)\cong S(3,1,1)$ or $R(T)\cong S(2,1,1),$ respectively.

Finally, suppose that the weight partition is \(1+1+1\). Then there are exactly three blocks, each of which is either a \(D_{ij}\)-block or a binary \(W_{ij}\)-block.  Construct an auxiliary graph \(H\) on the vertex set \(\{1,2,3,4\}\) by replacing every \(D_{ij}\)- or \(W_{ij}\)-block by the edge \(ij\). The graph \(H\) is simple: if two distinct blocks joined the same pair \(i,j\), then \(R(T)\) would contain two distinct \(c_i\)-\(c_j\) paths, and hence a cycle.  It is connected because \(R(T)\) is connected.  Therefore \(H\) is a connected simple graph on four vertices with three edges, so $H\cong K_{1,3}$ or $H\cong P_4$.

A binary \(W_{ij}\)-block subdivides the corresponding edge \(ij\) of \(H\) once, whereas a \(D_{ij}\)-block leaves it unsubdivided. If \(H\cong K_{1,3}\), subdividing respectively zero, one, two, or three of its edges gives $K_{1,3}$, $S(2,1,1)$, $S(2,2,1)$, $S(2,2,2)$. If \(H\cong P_4\), subdividing zero, one, two, or three of its edges gives \(P_4, P_5, P_6, P_7.\) Combining the three weight partitions and removing the repeated occurrence of \(S(2,1,1)\), we obtain exactly the following ten possible cores:
$K_{1,4}$, $S(3,1,1)$, $S(2,2,2)$, $ P_7$, $ S(2,1,1)$, $S(2,2,1)$, $P_6$, $P_5$, $ K_{1,3}$, $P_4$. These are \(R_1,\ldots,R_{10}\), respectively, in Table~\ref{tab:matching-four-families}. They are pairwise nonisomorphic. Their orders and degree sequences already distinguish all pairs except \(S(3,1,1)\) and \(S(2,2,1)\); in the former, the unique vertex of degree three has exactly one neighbour of degree two, whereas in the latter it has two.

It remains to determine the possible supports. Fix one of the cores \(R_i=R(T)\), and let \(S:=\operatorname{supp}(\mathbf a).\) Every leaf of \(R_i\) belongs to \(S\); otherwise it would also be pendant in \(T\), contradicting \(R_i=R(T)\). Moreover, Lemma~\ref{lem:support-bound} gives $|S|\leq4$. Hence, modulo \(\operatorname{Aut}(R_i)\), every candidate support is obtained from \(L(R_i)\) by adjoining at most \(4-|L(R_i)|\) vertices of \(V(R_i)\setminus L(R_i)\). For each candidate support \(S\), Lemma~\ref{lem:matching-support-formula} gives
\(
\nu(R_i(\mathbf a))
=
\mu_{R_i}(S)
:=
\max_{X\subseteq S}
\bigl(|X|+\nu(R_i-X)\bigr).
\)
In particular, the matching number depends only on the core \(R_i\) and the support \(S\), not on the positive pendant multiplicities. Since \(|S|\leq4\), each value \(\mu_{R_i}(S)\) is obtained by checking at most \(2^4=16\) subsets \(X\subseteq S\) and computing the matching number of the small forest \(R_i-X\). Likewise, the candidate support orbits are obtained by adjoining to \(L(R_i)\) subsets of \(V(R_i)\setminus L(R_i)\) of the permitted sizes and identifying them under \(\operatorname{Aut}(R_i)\). 

The complete list of candidate support orbits, together with their values of \(\mu_{R_i}(S)\), is given in Table~\ref{tab:matching-four-candidates}. The entries with
\(\mu_{R_i}(S)=4\) are precisely the admissible supports.

\vspace{-25pt}
\begin{table}[H]
\centering
\small
\setlength{\tabcolsep}{4pt}
\renewcommand{\arraystretch}{1.35}
\caption{Candidate support orbits and the corresponding values of
\(\mu_{R_i}(S)\).}
\label{tab:matching-four-candidates}
\begin{tabularx}{\textwidth}{c|X}
\hline
Core & Candidate support orbits \(S:\mu_{R_i}(S)\)\\
\hline

\(R_1\)
&
\(1234:\mathbf{4}\)
\\

\(R_2\)
&
\(234:\mathbf{4}\),\
\(1234:\mathbf{4}\),\
\(2345:5\),\
\(2346:5\)
\\

\(R_3\)
&
\(234:\mathbf{4}\),\
\(1234:\mathbf{4}\),\
\(2345:5\)
\\

\(R_4\)
&
\(34:\mathbf{4}\),\
\(134:\mathbf{4}\),\
\(345:5\),\
\(346:5\),\
\(1234:\mathbf{4}\),\
\(1345:5\),\
\(1346:5\),\
\(1347:5\),\
\(3456:5\),\
\(3467:5\)
\\

\(R_5\)
&
\(234:\mathbf{4}\),\
\(1234:\mathbf{4}\),\
\(2345:\mathbf{4}\)
\\

\(R_6\)
&
\(234:\mathbf{4}\),\
\(1234:\mathbf{4}\),\
\(2345:5\)
\\

\(R_7\)
&
\(34:\mathbf{4}\),\
\(134:\mathbf{4}\),\
\(345:\mathbf{4}\),\
\(1234:\mathbf{4}\),\
\(1345:5\),\
\(1346:\mathbf{4}\),\
\(3456:5\)
\\

\(R_8\)
&
\(34:3\),\
\(134:\mathbf{4}\),\
\(234:3\),\
\(1234:\mathbf{4}\),\
\(1345:\mathbf{4}\)
\\

\(R_9\)
&
\(234:3\),\
\(1234:\mathbf{4}\)
\\

\(R_{10}\)
&
\(34:3\),\
\(134:3\),\
\(1234:\mathbf{4}\)
\\

\hline
\end{tabularx}
\end{table}
For example, for \(R_2\) and \(S=234\), the values of \(|X|+\nu(R_2-X)\), as \(X\) ranges over the subsets of \(S\), are \(2,3,3,3,4,4,4,4,\) and hence \(\mu_{R_2}(234)=4\).
For \(S=2345\), taking \(X=S\) already gives \(4+\nu(R_2-S)=5\), and the remaining subsets give no larger value; thus \(\mu_{R_2}(2345)=5\).

The table lists every candidate orbit: for \(R_1,\ldots,R_{10}\), respectively, the numbers of candidates are \(1, 4, 3, 10, 3, 3, 7, 5, 2, 3.\) Retaining exactly the entries with \(\mu_{R_i}(S)=4\) gives, respectively, \(1, 2, 2, 3, 3, 2, 5, 3, 1, 1\) admissible support orbits. These are precisely the supports listed
in the last column of Table~\ref{tab:matching-four-families}. Hence there are \(1+2+2+3+3+2+5+3+1+1=23\) canonical core--support families.
\end{proof}

The preceding argument gives a structural classification only: every tree with matching number 4 belongs to one of the 23 canonical families in Table~\ref{tab:matching-four-families}. To obtain reconstruction from the spectral--walk data, two further issues remain: distinct canonical families must be separated by the data, and within each family the pendant multiplicities must be recovered up to the relevant core automorphisms.

\subsubsection{A finite coefficient encoding}

For a tree \(T\) with matching number 4, write $\eta_j(T):=[x^j]N_T(x)$ and define 
\[\mathcal I(T) := (n, m_2(T), m_3(T), m_4(T), \eta_3(T), \eta_4(T), \dots, \eta_8(T)),\] where \( n = |V(T)| \).

\begin{proposition}\label{prop:I-four-equivalence}
For trees of matching number 4, the finite tuple \(\mathcal I(T)\) and the polynomial pair $\bigl(\Delta_T(x),N_T(x)\bigr)$ determine one another.
\end{proposition}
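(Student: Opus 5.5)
We need to show that, for trees with $\nu(T)=4$, the tuple $\mathcal I(T)$ and the pair $(\Delta_T,N_T)$ determine each other. Both directions rest on degree bounds and a small number of coefficients fixed by $n$. Passing from $(\Delta_T,N_T)$ to $\mathcal I(T)$ is immediate, apart from recovering $n$. By Corollary~\ref{cor:rank-matching} and the forest formula for $\Delta_T$, we have $\Delta_T(x)=1-m_1x^2+m_2x^4-m_3x^6+m_4x^8$ with $m_1=n-1$. This gives $n=1-[x^2]\Delta_T$, and it also gives $m_2,m_3,m_4$ directly. The entries $\eta_3,\dots,\eta_8$ are coefficients of $N_T$, so they are read off as well.

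For the converse, $\mathcal I(T)$ clearly determines $\Delta_T$, since $m_1=n-1$. It then remains to show that $N_T$ is determined. This has two parts.

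\textbf{Degree bound on $N_T$.} First we show $\deg N_T\le 8$. Use the spectral decomposition $A=\sum_\theta \theta P_\theta$, where $\theta$ runs over the distinct eigenvalues. This gives
\[
F_T(x)=\sum_\theta \frac{\|P_\theta\mathbf 1\|^2}{1-\theta x}.
\]
The term for $\theta=0$ is a constant. Every other term has a denominator that divides $\Delta_T(x)=\prod_{\theta\neq0}(1-\theta x)^{\mathrm{mult}}$. Hence $N_T=\Delta_T F_T$ is a polynomial of degree at most $\deg\Delta_T=2\nu(T)=8$.

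Equivalently, one can argue with the adjugate. Each entry of $\operatorname{adj}(I-xA)$ is a principal or off-diagonal minor. In terms of $\lambda=1/x$, the minor is $x^{n-1}$ times a minor of $\lambda I-A$, and its degree in $x$ is at most the rank bound. We prefer the spectral argument because it is cleaner.

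\textbf{Low coefficients.} Next we show that $\eta_0,\eta_1,\eta_2$ are functions of $n$ and $m_2$. Expand $N_T=\Delta_T F_T$ with $F_T=\sum W_kx^k$, using $W_0=n$, $W_1=2(n-1)$, and $W_2=\sum_v\deg(v)^2$. This gives
\[
\eta_0=n,\qquad \eta_1=W_1=2(n-1),\qquad \eta_2=W_2-(n-1)n.
\]
For a tree, $m_2=\binom{n-1}{2}-\sum_v\binom{\deg v}{2}$. Together with $\sum_v\deg v=2(n-1)$, this expresses $W_2$ in terms of $n$ and $m_2$. This is the same identity $W_2=n(n-1)-2m_2$ quoted in Corollary~\ref{cor:finite-main}, so it gives $\eta_2=-2m_2$. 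Hence all coefficients of $N_T$ are determined by $\mathcal I(T)$.

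\textbf{Main obstacle.} The only step needing care is the degree bound $\deg N_T\le 8$. The projection argument handles it uniformly, because the zero eigenspace contributes only a constant to $F_T$ and so does not raise the degree of $N_T$. The identity for $\eta_2$ is routine bookkeeping.
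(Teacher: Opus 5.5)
Your proof is correct and has the same skeleton as the paper's. In one direction you read off the entries of $\mathcal I(T)$. In the other you combine $\deg N_T\le 8$ with $\eta_0=n$, $\eta_1=2(n-1)$, $\eta_2=-2m_2$, derived exactly as in the paper.

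The only real difference is how you get the degree bound. The paper argues on the adjugate. Since $\operatorname{rank}A(T)=8$, all minors of $A(T)$ of order greater than $8$ vanish, and by multilinearity the coefficient of $x^k$ in any cofactor of $I-xA(T)$ is a combination of $k\times k$ minors of $A(T)$. This is a pure rank argument that needs no diagonalization. It is also what your sketched ``alternative'' becomes once made precise.

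Your spectral argument uses the symmetry of $A(T)$, but it gives more information. Write
\[
N_T(x)=\|P_0\mathbf 1\|^2\,\Delta_T(x)+\sum_{\theta\neq0}\|P_\theta\mathbf 1\|^2\,\frac{\Delta_T(x)}{1-\theta x}.
\]
Every summand of the sum has degree at most $7$, so you get $\eta_8=m_4\|P_0\mathbf 1\|^2$. Hence $\eta_8\ge 0$ always, and $\eta_8=0$ exactly when $\mathbf 1\perp\ker A(T)$. This gives a conceptual explanation of the split into the strata $\eta_8=0$ and $\eta_8>0$ that the paper uses later.

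Recovering $n$ as $1-[x^2]\Delta_T$ instead of as $N_T(0)$ is an inessential variation.
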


\begin{proof}
By the matching-polynomial formula, $\Delta_T(x)=1-(n-1)x^2+m_2(T)x^4-m_3(T)x^6+m_4(T)x^8$. Thus the first four entries of \(\mathcal I(T)\) determine
\(\Delta_T\). Since \(\operatorname{rank}A(T)=2\nu(T)=8\), every minor of \(A(T)\) of order greater than eight vanishes. By multilinearity of the determinant, the coefficient of \(x^k\) in any minor of \(I-xA(T)\) is a linear combination of \(k\times k\) minors of \(A(T)\). Hence every cofactor of \(I-xA(T)\) has degree at most eight, and therefore \(\deg N_T\leq 8\). The three coefficients of \(N_T\) not explicitly included in \(\mathcal I(T)\) are $\eta_0=n, \eta_1=2(n-1), \eta_2=-2m_2(T)$. Indeed, from $N_T(x)=\Delta_T(x)F_T(x)$ we have $\eta_0=W_0=n$, $\eta_1=W_1=2(n-1)$, and $\eta_2=W_2-n(n-1)$. For a tree, \(W_2=\sum_{v\in V(T)}\deg_T(v)^2.\) Moreover, since \(T\) has \(n-1\) edges, \(m_2(T)=\binom{n-1}{2}-\sum_{v\in V(T)}\binom{\deg_T(v)}{2}.\) Using \(\sum_v\deg_T(v)=2(n-1)\), we obtain \(W_2=\sum_v\deg_T(v)^2=n(n-1)-2m_2(T).\)
Consequently, \(\eta_2=-2m_2(T)\). Hence \(\mathcal I(T)\) determines \(N_T\).

Conversely, the polynomial pair \((\Delta_T,N_T)\) determines \(\mathcal I(T)\) directly: \(n=N_T(0)\), the coefficients of \(x^4,x^6,x^8\) in \(\Delta_T\) recover
\(m_2(T),m_3(T),m_4(T)\), respectively, and \(\eta_3,\ldots,\eta_8\) are the corresponding coefficients of \(N_T\).
\end{proof}

Thus, throughout the remainder of the matching-number-four case, it suffices to work with the finite invariant \(\mathcal I(T)\). All recovery and separation certificates below will be expressed in terms of its coordinates. For a canonical family \((R_i,S)\), let the positive pendant parameters be ordered according to the increasing order of the vertices in \(S\). Proposition~\ref{prop:fixed-core-compression} expresses $\Delta_{R_i(\mathbf{a})}(x)=\det\bigl(I-xA(R_i)-x^2D_\mathbf{a}\bigr)$ and $N_{R_i(\mathbf{a})}(x)=|\mathbf{a}|\Delta_{R_i(\mathbf{a})}(x)+
(\mathbf1+x\mathbf{a})^{\mathsf T}\operatorname{adj}\bigl(I-xA(R_i)-x^2D_\mathbf{a}\bigr)(\mathbf1+x\mathbf{a})$. Consequently, every entry of \(\mathcal I(R_i(\mathbf{a}))\) is an explicit integer polynomial in the pendant parameters. To prove the matching-number-four case, it remains to establish: (1)injectivity of \(\mathbf{a} \mapsto \mathcal{I}(R_i(\mathbf{a}))\) within each canonical family, modulo \(\operatorname{Aut}(R_i)\); (2) disjointness of the \(\mathcal{I}\)-data of distinct canonical families.

\subsubsection{Reconstruction and separation for matching number 4}\label{sec:matching-four-reconstruction}
For a tree \(T\) with \(\nu(T)=4\), write \(\eta_j(T):=[x^j]N_T(x)\), and omit \(T\) when no ambiguity is possible. By Proposition~\ref{prop:I-four-equivalence}, the
polynomial pair \((\Delta_T,N_T)\) may be replaced, without loss of information, by the finite invariant \(\mathcal I(T)\). Thus, to prove determination in the matching-number-four case, it suffices to show that \(\mathcal I(T)\) distinguishes the canonical families and, within each family, determines the pendant parameters up to the relevant core automorphisms.  In particular, all invariant quantities used below are determined by the spectral--walk data. 

For an admissible pair \((R_i,S)\) in Table~\ref{tab:matching-four-families}, let \(\mathscr F_{i,S}:=\{R_i(\mathbf a):\operatorname{supp}(\mathbf a)=S\}.\) If \(S=\{v_{i_1},\ldots,v_{i_r}\}\), \(i_1<\cdots<i_r,\) the positive pendant parameters are always listed in this order, and all coordinates of \(\mathbf a\) outside \(S\) are set equal to zero.

We use the fixed-core formulas throughout the calculations below. Namely, for \(T=R_i(\mathbf a)\), substitute the labelled adjacency matrix \(A_{R_i}\) and the corresponding pendant vector \(\mathbf a\) into \(M=I-xA_{R_i}-x^2D_{\mathbf a}, \Delta_T(x)=\det M,\) and
\(
N_T(x)
=
|\mathbf a|\det M
+
(\mathbf 1+x\mathbf a)^{\mathsf T}
\operatorname{adj}(M)
(\mathbf 1+x\mathbf a).
\)
The required quantities are then obtained by coefficient extraction:
\[
m_j(T)=(-1)^j[x^{2j}]\Delta_T(x),
\eta_j(T)=[x^j]N_T(x).
\]
Thus, throughout this subsection, ``direct substitution'', ``direct expansion'', and ``direct collection'' mean precisely this substitution and coefficient extraction, followed by elementary polynomial simplification.

\medskip
\noindent\textbf{Internal reconstruction}\mbox{}. We first show that no collision occurs within a canonical family.
\medskip

\begin{proposition}\label{prop:internal-reconstruction}
For each of the twenty-three canonical families
\(\mathscr F_{i,S}\), the map $\mathbf{a}\longmapsto \mathcal I(R_i(\mathbf{a}))$ is injective modulo the action of \(\operatorname{Aut}(R_i)\).
\end{proposition}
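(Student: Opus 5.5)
The proof will be a family-by-family recovery, modelled on the matching-number-three argument of Theorem~\ref{thm:matching-three-recovery}. For each of the 23 pairs \((R_i,S)\) in Table~\ref{tab:matching-four-families} I will substitute the labelled adjacency matrix \(A_{R_i}\) and a generic pendant vector \(\mathbf a\) supported on \(S\) into Proposition~\ref{prop:fixed-core-compression}. Expanding \(\det(I-xA_{R_i}-x^2D_{\mathbf a})\) and the adjugate form then expresses each coordinate of \(\mathcal I(R_i(\mathbf a))\) as an explicit integer polynomial in the (at most four) positive parameters.

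\emph{Step 1.} The coefficients of \(\Delta_T\) are matching counts, so I will read them combinatorially rather than by expansion. The quantity \(n=|\mathbf a|+|V(R_i)|\) gives the linear symmetric-type sum of the parameters. The quantities \(m_2,m_3,m_4\) are polynomials in \(\mathbf a\) that are multilinear in the supported parameters, because a matching uses at most one pendant edge at each core vertex. In particular, \(m_4\) is a sum of products \(\prod_{v\in X}a_v\) weighted by perfect-type matchings of \(R_i-X\) of the complementary size.

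\emph{Step 2.} I will combine these with suitable linear combinations of \(\eta_3,\dots,\eta_8\) to isolate elementary symmetric functions, or individual monomials, of the parameters. The models are the quantities \(\rho,\sigma,\kappa,\lambda,\omega\) used for \(\nu=3\).

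\emph{Step 3.} The remaining work splits by family.
\begin{itemize}
\item Families whose support vertices are permuted by \(\operatorname{Aut}(R_i)\), such as \(R_1\), \(R_9\), and the leaves of \(R_3\) and \(R_5\): I will recover \(e_1,e_2,\dots\) of the orbit parameters and take the root multiset of the corresponding polynomial, exactly as for \(T_4\).
\item Families with trivial automorphism group on the support: I will find data-determined quantities equal to pairwise products such as \(ab\), \(ad\), \(bd\). Then each parameter is a ratio of the form \(a=uv/w\), as in the \(T_3\) certificate.
\item Path cores \(R_4,R_7,R_8,R_{10}\): reflection-invariant parameters (for example the middle support vertex) are recovered linearly first. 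The reflected pair is then recovered as the roots of \(z^2-sz+p\).
\end{itemize}
In each case I will verify that the recovered multiset or tuple is determined exactly up to \(\operatorname{Aut}(R_i)\) restricted to \(S\). I will record the explicit certificates in a table deferred to Appendix~A.

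\emph{Main obstacle.} I expect the hardest part to be families with four independent, asymmetrically placed parameters, for example \(R_7\) with support \(1346\) and \(R_5\) with support \(2345\). There the low-order invariants may only give symmetric combinations that fail to separate parameters sitting at inequivalent positions. My first attempt will be to use the top coefficients \(\eta_7,\eta_8\) and \(m_4\), which are of highest degree and most likely to contain distinguishing monomials. I will then look for a linear combination equal to a single product, such as \(\eta_8\) plus multiples of \(m_4\) and \(m_3\) equalling \(-c\,a_ia_j\). If no such clean product appears, the fallback is to show that the polynomial map is injective on positive integers by a monotonicity argument: fix the recovered symmetric functions and show that the remaining invariant is strictly monotone in the one free ordering parameter.
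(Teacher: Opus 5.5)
Your overall route is the paper's: substitute each labelled core into Proposition~\ref{prop:fixed-core-compression} and recover the parameters family by family. But for this statement the proof consists of the certificates themselves, and the templates in your Step~3 do not cover what actually happens in several families.

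\emph{Pairing of reflected pairs.} On path cores with two reflected pairs (\(\mathscr F_{4,1234}\), \(\mathscr F_{7,1234}\), \(\mathscr F_{8,1345}\), \(\mathscr F_{10,1234}\)), recovering \(\{a,b\}\) and \(\{c,d\}\) as root multisets does not determine the tree. You also need a data-determined mixed term such as \(\ell=ac+bd\). The two possible pairings give \(ac+bd\) and \(ad+bc\), whose difference is \((a-b)(c-d)\). So they are either separated by \(\ell\) or related by the reflection. Your third bullet (linear recovery of a middle vertex, then a single reflected pair) fits none of these families.

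\emph{Quantities known only up to two candidates.} In several families the invariants do not produce the orbit symmetric functions or clean pairwise products. Some quantity is pinned down only up to two algebraic candidates, or the natural linear system degenerates:
\begin{itemize}
\item In \(\mathscr F_{6,234}\) (where \(\eta_8=0\)), the distinguished parameter is known only as a root of \(x^2-(S-2)x+(m_4-2H)=0\).
\item In the \(R_2\) rows, the data give only the unordered pair \(\{a+b,\,c+d\}\).
\item In the \(R_4\) rows, the system \(p+q=A\), \(p(y+2)+q(x+4)=B\) for \(p=ab\), \(q=cd\) is singular when \(c+d=a+b+2\). This leaves the quadratic \(3z^2-(3A+4)z+D=0\).
\item In the \(R_7\) rows, \(a+b\) comes out of a quadratic candidate system.
\item In \(\mathscr F_{7,1346}\), one obtains only the multiset \(\{a,\,b+d\}\).
\end{itemize}

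Each of these needs a separate elimination of the spurious candidate. Your fallback, strict monotonicity of a remaining invariant in a free ordering parameter, is not the right tool. The ambiguity is between the two roots of a quadratic, not an ordering, and several of the paper's eliminations are arithmetic:
\begin{itemize}
\item For \(\mathscr F_{6,234}\), two candidates would satisfy \(x+x'=S-2\) and \(H=2xx'\). The remaining pair would then have discriminant \((x'-2)^2+4\), a square only when \(x'=2\). Symmetrically \(x=2\), contradicting \(x\neq x'\).
\item In the \(R_4\) case, the root sum \(A+\tfrac{4}{3}\) is not an integer, so at most one root is integral.
\item In the \(R_2\), \(R_7\) and \(\mathscr F_{7,1346}\) cases, the two candidates predict different values of \(\eta_7\) or \(\eta_5\), or force \(b=0\).
\end{itemize}
Where integrality is essential, a real-variable monotonicity argument need not exist.

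Your diagnosis of the hard cases is also off. \(\mathscr F_{5,2345}\) is immediate: with \(E=\eta_8\) and \(K=-(3\eta_7+6m_4+2\eta_8)/2\), one gets \(d=6E/(K-2E)\), then \(bc=(m_4-E-E/d)/d\), then \(a=E/(bcd)\). The delicate families are exactly those listed above.
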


\begin{proof}
The explicit recovery certificates are collected in Appendix~\ref{app:recovery-certificates}.  More precisely, Table~\ref{tab:zero-recovery} treats the eight families with
\(\eta_8=0\), while Tables~\ref{tab:positive-recovery-I} and \ref{tab:positive-recovery-II} treat the fifteen families with \(\eta_8>0\). The recovery arguments fall into four elementary types. Throughout the proof, ``the data'' refers to the entries of \(\mathcal I(T)\); by Proposition~\ref{prop:I-four-equivalence}, this is equivalent to
using the polynomial pair \((\Delta_T,N_T)\).

\smallskip
\noindent\emph{\textbf{Symmetric-root recovery.}}
In the families $\mathscr F_{3,234}, \mathscr F_{9,1234}, \mathscr F_{1,1234}, \mathscr F_{3,1234}$, the data determine the elementary symmetric functions of a parameter multiset. The multiset is therefore recovered as the root multiset of the monic polynomial displayed in the corresponding appendix entry. The remaining permutations are precisely those induced by core automorphisms.

\smallskip
\noindent\emph{\textbf{Quadratic-pair recovery.}} In most of the remaining families, the data determine the sum and product of one or two unordered parameter pairs.  Each pair is
therefore recovered as the root multiset of a quadratic polynomial. When two pairs must be matched, the possible mixed terms are $ac+bd$ and $ad+bc$, whose difference is $(a-b)(c-d)$. Thus the data-determined mixed term selects the pairing unless one pair has equal entries; in that case the two pairings are already related by a core reflection.

\smallskip
\noindent\emph{\textbf{Unique positive-root recovery.}}
For \(\mathscr F_{6,1234}\), after the parameter \(a\) has been recovered, the parameter \(b\) is the unique positive root of \(3z^2+Cz-\frac{E}{a}=0,\) because the product of its two roots is \(-E/(3a)<0\). The same argument applies to the quadratic $z^2-Cz-E=0$ in \(\mathscr F_{8,1345}\).

\smallskip
\noindent\emph{\textbf{Candidate elimination.}}
Besides the preceding root-multiset and sign arguments, \(\mathscr F_{6,234}\), the combined \(R_2\), \(R_4\), and \(R_7\) rows, and \(\mathscr F_{7,1346}\) require an additional uniqueness check. For \(\mathscr F_{6,234}\), put \(S=n-6, P=m_2-4S-5, H=m_3-3P-2S-1.\) As recorded in Table~\ref{tab:zero-recovery}, the distinguished
parameter \(x\) satisfies \(x^2-(S-2)x+(m_4-2H)=0, P=x(S-x)+\frac{H}{x}-1.\) Suppose that two distinct positive integers \(x,x'\) satisfied both relations and both yielded positive integral values for the remaining pair. By Vieta's formula, \(x+x'=S-2.\) Subtracting the second relation for \(x\) and \(x'\) gives \(H=2xx'\). Hence, for the candidate \(x\), the remaining pair has sum \(x'+2\) and product \(2x'-1\), and therefore discriminant \((x'+2)^2-4(2x'-1)=(x'-2)^2+4.\) If \(x'=1\), this is \(5\), not a square. If \(x'\ge2\) and it is a square, factoring the difference of squares forces \(x'=2\). Interchanging \(x\) and \(x'\) then gives \(x=2\), contradicting
\(x\ne x'\). Thus \(x\) is uniquely determined. For the combined \(R_2\)-row, the data first recover $\delta=a-b$ and $\{a+b,c+d\}$. If both orientations of this unordered pair were compatible with the data, equality of \(m_3,\eta_4,m_4\) would force $\delta=0$, $|(a+b)-(c+d)|=2$, whereas the two predicted values of \(\eta_7\) would differ by a nonzero quantity. Hence only one orientation is possible. In the exceptional \(R_4\)-case, the unknown product \(p\) is an integer root of $3z^2-(3A+4)z+D=0$. Both roots cannot be integers because their sum is $A+\frac43$. For the combined \(R_7\)-row, the data give at most two candidates for \(x=a+b\). If two distinct candidates \(x,x'\) survived the \(m_3\)-equation, then \(x+x'=S-1\), and their predicted values of \(\eta_5\) would differ by \(-2(R+1)(S-2x-1)=-2(R+1)(x'-x)\neq0.\) Thus the correct candidate is unique. Finally, in \(\mathscr F_{7,1346}\), if both roots of the quadratic for \(a\) produced the known values of the remaining invariants, subtraction would give $R=U$, $(c-1)Z=0$. Since \(Z>0\), this forces \(c=1\), and then \(R=U\) forces the positive parameter \(b\) to vanish, a contradiction. The appendix formulas therefore recover all distinguished parameters and all permissible unordered parameter sets. The remaining ambiguity is exactly the action of the corresponding core automorphism group \(\operatorname{Aut}(R_i)\).
\end{proof}

\medskip
\noindent\textbf{Separation in the zero-\texorpdfstring{\(\eta_8\)}{eta8} stratum}\mbox{}\par

\begin{lemma}\label{lem:zero-eta8-stratum}
A canonical family has \(\eta_8=0\) precisely in the following three rows:
\[
\begin{array}{c|c|c}
\text{Row}&\text{Families}&\eta_7\\
\hline
\mathrm A&
\mathscr F_{3,234},\mathscr F_{5,234},\mathscr F_{6,234}
&-6m_4\\
\mathrm B&
\mathscr F_{7,34},\mathscr F_{7,345},\mathscr F_{8,134}
&-4m_4\\
\mathrm C&
\mathscr F_{9,1234},\mathscr F_{10,1234}
&-2m_4.
\end{array}
\]
Consequently, the row is determined by $-\frac{\eta_7}{2m_4}=3,2,1$. Within each row, distinct families have disjoint \(\mathcal I\)-data.
\end{lemma}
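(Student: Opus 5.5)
We need to show three things: which of the 23 families have $\eta_8=0$, what $\eta_7$ is on those families, and that the families within each row are separated by the data.

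\emph{Step 1: a structural formula for $\eta_8$ and $\eta_7$.} The top coefficients of $N_T$ come from the top coefficients of the cofactors of $I-xA(T)$. By the multilinearity argument in Proposition~\ref{prop:I-four-equivalence}, $[x^8]$ of the $(u,v)$-cofactor is, up to sign, a sum of $8\times 8$ minors of $A(T)$. Since $\operatorname{rank}A(T)=8$, we can factor $A=\sum_{\lambda\ne0}\lambda P_\lambda$. This gives
\[
\eta_8=m_4\cdot \mathbf 1^{\mathsf T}P_0\mathbf 1\cdot(\text{sign}),\qquad P_0=\text{projection onto }\ker A(T).
\]
Equivalently, $\eta_8$ is proportional to the squared length of the projection of $\mathbf 1$ onto $\ker A(T)$. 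Hence $\eta_8=0$ exactly when $\mathbf 1\perp\ker A(T)$.

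Rather than rely on this derivation, I would confirm it family by family using Proposition~\ref{prop:fixed-core-compression}. There $\ker A(T)$ is explicit: it is spanned by differences of pendant leaves at a common support vertex, together with a few core-dependent kernel vectors. The differences of leaves are automatically orthogonal to $\mathbf 1$. So $\eta_8=0$ iff every remaining kernel vector of the core-with-one-leaf-per-support-vertex skeleton has zero coordinate sum.

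In practice I would simply compute $[x^8]N_{R_i,\mathbf a}$ symbolically for all 23 families. I would then read off that it vanishes identically on the eight listed families. On the other fifteen, I would show it is a polynomial with positive coefficients in the parameters, hence $>0$; Proposition~\ref{prop:internal-reconstruction} already uses this split.

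\emph{Step 2: the $\eta_7$ identity.} For the eight families with $\eta_8=0$, I would expand $[x^7]N$ and compare with $m_4=[x^8]\Delta$, verifying $\eta_7=-6m_4,-4m_4,-2m_4$ row by row.

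There is a conceptual reading of this. When $\eta_8=0$, the rational function $F_T=N_T/\Delta_T$ has leading behaviour controlled by $\mathbf 1^{\mathsf T}A^{-1}$ on the range. The ratio $-\eta_7/(2m_4)$ should then equal $\mathbf 1^{\mathsf T}A^{+}\mathbf 1$ up to normalization, which is an integer tied to the core shape. For instance, for $P_4$-like skeletons this counts a perfect-matching-type sum. Since $m_4>0$ always, the ratio is well defined and takes the values $3,2,1$, so the row is read off from $\mathcal I(T)$.

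\emph{Step 3: separation within a row.} This is the main obstacle.

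For row C, $\mathscr F_{9,1234}$ versus $\mathscr F_{10,1234}$, I would look for a low-order invariant with a sign pattern, in the manner of $\vartheta$ in Proposition~\ref{prop:matching-at-most-two} and the $\chi,\kappa$ certificates in Theorem~\ref{thm:matching-three-recovery}. The natural candidate is a linear combination such as $\eta_3-\eta_2$ or $m_2-(\text{function of }n)$, chosen to vanish identically on one family and be strictly negative on the other. The star-like core $K_{1,3}$ and the path $P_4$ differ in $m_2$ of the core, so something like $m_2-\binom{n-1}{2}+\sum\binom{\deg}{2}$ refinements built from $\eta_4,\eta_5$ should work.

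For rows A and B, with three families each, I would find two such certificates per row, for example $\eta_5+c\,m_3$ and $\eta_6+c'\,m_4$. I would expand them in the parameters and prove constant sign or identical vanishing, as in the $\nu=3$ table.

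If a clean sign certificate fails for some pair, I would fall back on the recovery formulas of Proposition~\ref{prop:internal-reconstruction}. I would apply family $X$'s recovery to data from family $Y$ and show that this yields a non-positive or non-integral parameter, or a predicted $\eta_j$ that disagrees. This is the same candidate-elimination style used for $\mathscr F_{6,234}$.
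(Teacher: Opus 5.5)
Your Steps 1 and 2 are fine, and your spectral heuristic is in fact exact, so it can replace the symbolic expansion. Expanding $N_T=\Delta_T F_T$ over the eigenspaces gives $\eta_8=m_4\|P_0\mathbf 1\|^2$, with sign $+$. Because $\Delta_T$ is even, it also gives $\eta_7=-m_4\,\mathbf 1^{\mathsf T}A^{+}\mathbf 1$ for every tree with $\nu=4$. Hence $\eta_8=0$ if and only if $A(T)y=\mathbf 1$ is solvable, and in that case $-\eta_7/(2m_4)=\tfrac12\mathbf 1^{\mathsf T}y$. The pendant rows force $y_v=1$ at every support vertex, so both facts depend only on $(R,S)$ and reduce to a small linear system on the core. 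This is a more conceptual route to the first two assertions than the paper's direct substitution.

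The genuine gap is Step 3, which carries the real content of the lemma. You describe a search, not an argument that distinct families in a row have disjoint $\mathcal I$-data, and your concrete suggestion for row C cannot work. One has $\tfrac12(\eta_3+4m_2)=\sum_{uv\in E(R)}(\deg_T u-1)(\deg_T v-1)$. The trees $R_9(1,3,1,1)\in\mathscr F_{9,1234}$ and $R_{10}(2,2,1,1)\in\mathscr F_{10,1234}$ both have $n=10$, degree sequence $(4,4,2,2,1^6)$, and this sum equal to $15$. So nothing built from $n,m_2,\eta_2,\eta_3$ separates row C. The paper uses higher coefficients: with $\Pi=-\eta_6-4m_4$, the quantity $\Omega=\tfrac12\eta_5-m_3-\Pi$ equals $-2bcd<0$ on $\mathscr F_{9,1234}$ and $cd>0$ on $\mathscr F_{10,1234}$.

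In rows A and B the paper does not get by with sign certificates alone.
\begin{itemize}
\item \textbf{Row B} (paper's notation): $D_0=\eta_6+4m_4$ takes the values $0$, $xyz$, $ab(c-1)$, so it strictly separates only $\mathscr F_{7,34}$ from $\mathscr F_{7,345}$. The boundary case $c=1$ against $\mathscr F_{7,34}$ needs $n,m_4,m_2$. For $\mathscr F_{7,345}$ versus $\mathscr F_{8,134}$, equality of $m_4$ and $\eta_6$ forces $c=1+xyz/(xyz+x(y+z))\in(1,2)$, which is excluded only by integrality.
\item \textbf{Row A}: this is settled entirely by elimination in the elementary symmetric functions of the parameters. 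It ends in positivity or integrality contradictions: $Y+Z=2-2r\le 0$; $(3k-5)(X-1)=0$ with $k\in\mathbb Z_{>0}$ and $X>1$; and $V+3v=3$ with $V,v\ge 1$.
\end{itemize}
Your fallback of feeding one family's data into another's recovery formulas points in this direction. Until these eliminations are actually carried out, however, the disjointness assertion remains unproved.
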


\begin{proof}
Direct substitution gives the displayed values. For every other canonical family, \(\eta_8\) is a nonzero polynomial with nonnegative integer coefficients in the positive pendant parameters, and hence \(\eta_8>0\).

\noindent\textbf{Row A}. Consider \(\mathscr F_{3,234}, \mathscr F_{5,234}, \mathscr F_{6,234}.\) It remains to exclude cross-family collisions within each row. Suppose throughout that two trees from distinct families have the same \(\mathcal I\)-data. For \(\mathscr F_{3,234}\) versus \(\mathscr F_{5,234}\), write \(x,y,z\) for the parameters of the former family and let \(s,p,r\) be their elementary symmetric functions. Write \(X,Y,Z\) for the parameters of the latter family, with \(X\) distinguished, and let \(S,P,R\) be their elementary symmetric functions. Equality of \(n,m_2,m_3,m_4\) gives $S=s+2, P=p+2s+1, R=r+2p+X=3r+2p+s$. Hence \(X=2r+s\), and therefore $Y+Z=S-X=2-2r\le0$, contrary to \(Y,Z>0\).

For \(\mathscr F_{3,234}\) versus \(\mathscr F_{6,234}\), write \(x,y,z\) for the parameters of the former family and let \(s,p,r\) be their elementary symmetric functions.
Write \(X,Y,Z\) for the parameters of the latter family, with \(X\) distinguished, and put \(q=Y+Z, v=YZ,\) so that \(S=X+q, P=Xq+v, R=Xv.\) Equality of \(n,m_2,m_3,m_4\) and \(\eta_6\) yields \(r=(X-1)(s-X-2), 3v(X-1)=7r.\) Since \(r>0\), we have \(X>1\). Cancelling \(X-1\) gives \(3v=7(s-X-2).\) Hence, for some \(k\in\mathbb Z_{>0}\), \(v=7k, s-X-2=3k.\) Substitution into the equality of \(\eta_5\) gives \((3k-5)(X-1)=0,\) which is impossible because \(X>1\) and \(k\in\mathbb Z_{>0}\).

Finally, compare \(\mathscr F_{5,234}\) with \(\mathscr F_{6,234}\). Write \(x,y,z\) for the parameters of \(\mathscr F_{5,234}\), with \(x\) the distinguished parameter, and set \(q=y+z, v=yz,\) \(s=x+q, p=xq+v, r=xv.\) Write \(X,Y,Z\) for the parameters of \(\mathscr F_{6,234}\), with \(X\) distinguished, and set \(Q=Y+Z, V=YZ,\) \(S=X+Q, P=XQ+V, R=XV,\) as in Table~\ref{tab:zero-recovery}. 

Equality of \(n\) and \(m_2\) gives \(S=s-1, p=P+S.\) Equality of \(m_3\), followed by equality of \(\eta_5\), then yields \(R=r-P+S-x-X, S=x+X.\) Consequently,
\(Q=S-X=x, R=r-P,\) $r=P+R=xX+V+XV$. Equality of \(m_4\) also gives $r=2R+XQ=2XV+xX$. Since \(V>0\), comparison yields \(X=1\). Equality of \(-\eta_6\) then reduces to $V+3v=3$, again impossible because \(V,v \in \mathbb{Z}_{>0}\). Thus the three families in row A have pairwise disjoint
\(\mathcal I\)-data.

\noindent\textbf{Row B}. Write \(u,v\) for the parameters of \(\mathscr F_{7,34}\), \(x,y,z\) for those of \(\mathscr F_{7,345}\), and \(a,b,c\) for those of \(\mathscr F_{8,134}\). The data-determined quantity \(D_0:=\eta_6+4m_4\) has the respective values \(0, xyz, ab(c-1).\) Hence \(\mathscr F_{7,34}\) and \(\mathscr F_{7,345}\) are
immediately separated. Suppose that \(\mathscr F_{7,34}\) and \(\mathscr F_{8,134}\) have the same data. Then \(D_0=0\) forces \(c=1\). Equality of \(n\) and \(m_4\) gives
\(u+v=a+b, uv=ab,\) and equality of \(m_2\) then forces \(a=0\), a contradiction. It remains to compare \(\mathscr F_{7,345}\) with \(\mathscr F_{8,134}\). Put
\(\rho=xyz, A=x(y+z).\) Equality of \(m_4\) and \(\eta_6\) would give \(c=1+\frac{\rho}{\rho+A},\) so that \(1<c<2\), contrary to \(c\in\mathbb Z_{>0}\). Thus the three families in row B are pairwise separated.

\noindent\textbf{Row C}. Put \(\Pi:=-\eta_6-4m_4, \Omega:=\frac{\eta_5}{2}-m_3-\Pi.\) Direct substitution gives $\Omega=-2bcd<0$ on $\mathscr F_{9,1234}$, $\Omega=cd>0$ on $\mathscr F_{10,1234}$. Here \(b,c,d\) denote the positive pendant parameters in the notation
of Table~\ref{tab:zero-recovery}. Hence the two families have disjoint \(\mathcal I\)-data.
\end{proof}

\begin{corollary}
\label{cor:zero-eta8-recovery}
If \(T,T'\) have matching number 4, \(\eta_8(T)=\eta_8(T')=0\), and the same spectral-walk data, then \(T\cong T'\).
\end{corollary}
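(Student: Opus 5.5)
The corollary follows by assembling three earlier results: the canonical classification, the row separation of Lemma~\ref{lem:zero-eta8-stratum}, and the internal reconstruction of Proposition~\ref{prop:internal-reconstruction}.

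First, I pass from spectral-walk data to the finite invariant. By Proposition~\ref{prop:spectral-walk-polynomials}, equal spectral-walk data give $(\Delta_T,N_T)=(\Delta_{T'},N_{T'})$. By Proposition~\ref{prop:I-four-equivalence}, this is the same as $\mathcal I(T)=\mathcal I(T')$. Next, write $T=R_i(\mathbf a)$ and $T'=R_j(\mathbf a')$ in canonical core form. By Proposition~\ref{prop:matching-four-classification}, $T$ lies in a unique canonical family $\mathscr F_{i,S}$ and $T'$ in a unique family $\mathscr F_{j,S'}$, each among the 23 entries of Table~\ref{tab:matching-four-families}.

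Second, I locate both trees in the zero-$\eta_8$ stratum. Since $\eta_8(T)=\eta_8(T')=0$, Lemma~\ref{lem:zero-eta8-stratum} puts both families among the eight families in rows A, B, C. For every other family, that lemma shows $\eta_8>0$, which is impossible here.

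Third, I show the two trees lie in the same row. Since $\nu=4$, the leading coefficient of $\Delta_T$ is $m_4(T)>0$. The lemma gives $\eta_7=-6m_4,-4m_4,-2m_4$ on rows A, B, C respectively. Hence the data-determined ratio $-\eta_7/(2m_4)\in\{3,2,1\}$ identifies the row, and $T,T'$ share it. The second part of Lemma~\ref{lem:zero-eta8-stratum} says distinct families within a row have disjoint $\mathcal I$-data. Therefore $(i,S)=(j,S')$ up to an automorphism of the core.

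Finally, within the common family $\mathscr F_{i,S}$, Proposition~\ref{prop:internal-reconstruction} shows that $\mathbf a\mapsto\mathcal I(R_i(\mathbf a))$ is injective modulo $\operatorname{Aut}(R_i)$. So $\mathbf a'=\mathbf a\circ\gamma$ for some $\gamma\in\operatorname{Aut}(R_i)$. This $\gamma$ extends, by matching pendant vertices bijectively at each core vertex, to an isomorphism $T\cong T'$.

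There is no real obstacle beyond bookkeeping. The step needing care is the identification of families "up to core automorphism": the labelled pendant vector must be transported compatibly so that the internal injectivity statement applies. This is guaranteed by the intrinsic nature of the canonical pair $(R(T),\mathbf a)$ noted in Section~\ref{sec:core-reduction}.
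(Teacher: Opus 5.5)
Your proposal is correct and follows the paper's (implicit) argument: the corollary is stated without a separate proof because it comes from combining Propositions~\ref{prop:spectral-walk-polynomials} and~\ref{prop:I-four-equivalence} with the row identification and within-row disjointness of Lemma~\ref{lem:zero-eta8-stratum}, and then applying the internal injectivity of Proposition~\ref{prop:internal-reconstruction} (Table~\ref{tab:zero-recovery}) to the eight zero-$\eta_8$ families. Two points of bookkeeping the paper leaves implicit are handled correctly: $m_4>0$ makes the ratio $-\eta_7/(2m_4)$ well defined, and the pendant vector is transported along a core automorphism to produce the isomorphism.
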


\medskip
\noindent\textbf{Separation in the positive stratum}\mbox{}. We now assume \(E=\eta_8>0\). Retain \(E,K,Q\) from \eqref{eq:EKQ-definition}, and define
\medskip

\begin{equation} \label{eq:sign-sieve-invariants}
\begin{aligned}
    A&:=Q-2E,&
    B&:=Q-2m_4+2E,&
    C&:=Q+2m_4-2E,\\
    D&:=5Q+2m_4-6E,&
    G&:=-\eta_3-2\eta_5-3\eta_6.
\end{aligned}
\end{equation}
These quantities provide a convenient sign sieve for the remaining positive families.

Before applying the sign sieve, two exceptional core types may be set aside. Recall that \(K=-(3\eta_7+6m_4+2\eta_8)/2.\) One has \(K=0\) precisely on \(\mathscr F_{1,1234}\), whereas \(K>0\) on every other positive family; hence the \(R_1\)-family is already globally separated.

The two positive \(R_8\)-families are treated separately. The quantity \(Q=-\eta_7-4m_4\) separates them from all positive families other than the two \(R_5\)-families, and the comparison with \(\mathscr F_{5,1234}\) is separated by \(K/E\)(\(K/E \leq 2\) on \(\mathscr F_{5,1234}\), \(K/E > 2\) on both \(R_8\)-families). Thus only
\(\mathscr F_{5,2345}\ \text{vs.}\ \mathscr F_{8,1345},\mathscr F_{5,2345}\ \text{vs.}\ \mathscr F_{8,1234}\) remain among the \(R_5\)--\(R_8\) comparisons; these are treated
below. We may therefore apply the following sign sieve to the remaining 12 non-\(R_1\), non-\(R_8\) positive families.

\bigskip
\noindent\textit{The global sign sieve}
\medskip

\begin{lemma}\label{lem:preliminary-positive-separation}
On the 12 positive families not involving \(R_1\) or \(R_8\), the quantities \(A,B,C,D,G\) have the signs displayed in Table~\ref{tab:positive-sign-sieve}.
\end{lemma}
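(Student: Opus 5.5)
The plan is a direct symbolic computation, family by family, using the fixed-core formula of Proposition~\ref{prop:fixed-core-compression}. The twelve families are the positive families from Table~\ref{tab:matching-four-families} other than $\mathscr F_{1,1234}$, $\mathscr F_{8,134}$ (which is in the zero stratum anyway), $\mathscr F_{8,1234}$ and $\mathscr F_{8,1345}$. Concretely these are
\[
\mathscr F_{2,234},\ \mathscr F_{2,1234},\ \mathscr F_{3,1234},\ \mathscr F_{4,34},\ \mathscr F_{4,134},\ \mathscr F_{4,1234},\ \mathscr F_{5,1234},\ \mathscr F_{5,2345},\ \mathscr F_{6,1234},\ \mathscr F_{7,134},\ \mathscr F_{7,1234},\ \mathscr F_{7,1346}.
\]
For each family I will fix the labelled core adjacency matrix $A_{R_i}$ from the edge column of Table~\ref{tab:matching-four-families}. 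I will then insert the pendant vector $\mathbf a$, supported on $S$ with the listed positive parameters, and form $M=I-xA_{R_i}-x^2D_{\mathbf a}$. From this I will extract $m_4=[x^8]\Delta_T$ and $\eta_3,\eta_5,\eta_6,\eta_7,\eta_8$ from $N_T=|\mathbf a|\det M+(\mathbf1+x\mathbf a)^{\mathsf T}\operatorname{adj}(M)(\mathbf1+x\mathbf a)$. The next step is to form $E=\eta_8$, $Q=-\eta_7-4m_4$ and $K$ as in \eqref{eq:EKQ-definition}, and then the five combinations $A,B,C,D,G$ of \eqref{eq:sign-sieve-invariants}. Each of these is an explicit integer polynomial in the positive parameters.

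To read off signs I will use the same device as in Lemma~\ref{lem:zero-eta8-stratum}. After collection, each displayed quantity should either vanish identically, or be $\pm$ a polynomial with nonnegative integer coefficients that is not identically zero. In the latter case the sign is strict for all positive parameter values. Where a polynomial has mixed-sign coefficients, I will rewrite it as a sum of manifestly signed terms. The substitution $a=1+a'$ with $a'\ge0$ should typically clear negative terms, as in the $T_6$ certificate $\kappa=2c(ae-1)\ge0$, and in that case the table entry records a weak sign such as $\geq0$. Two structural reductions cut down the work. Within each core family, $m_4$ is simply the product of the support parameters times a matching count on $R_i-S$. Also, $\eta_8$ and $\eta_7$ come only from top-degree cofactors, since $\operatorname{rank}A=8$. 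So the top coefficients can be computed from $8\times8$ principal structure rather than from full adjugates, which keeps the expressions short.

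The main obstacle will be the mixed-sign cases, most likely $D=5Q+2m_4-6E$ and $G=-\eta_3-2\eta_5-3\eta_6$. Here $G$ involves low-order coefficients, which carry many terms of both signs. For these I will first try grouping by total degree in the parameters, and show that the top homogeneous part has a definite sign with every lower-degree correction dominated termwise. If that fails, I will factor out the support product and use the positivity constraints $a_i\ge1$ by shifting variables. The table is then filled entry by entry. Correctness is checked by confirming that every row is sign-consistent over the whole positive orthant, and no pairwise separation is asserted at this stage.
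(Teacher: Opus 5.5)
Your method is the paper's. You substitute the fixed-core expressions for each of the twelve families (your list is correct) and write each positive parameter as $1+x_i$ with $x_i\in\mathbb Z_{\ge0}$. A strict sign is then certified by nonnegative coefficients with a positive constant term, and a weak sign by a zero constant term. The paper does exactly this for the columns $A,B,C,D$. The only two asserted $G$-entries, on $\mathscr F_{3,1234}$ and $\mathscr F_{7,134}$, are handled by the same shift and by the explicit bound $G\le -33$.

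Several of your auxiliary claims are wrong, though, and the first would corrupt the computation if you relied on it.

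\begin{enumerate}
\item \textbf{The $m_4$ shortcut is false.} Arguing as in Lemma~\ref{lem:matching-support-formula}, $m_4=\sum_{X\subseteq S}\bigl(\prod_{v\in X}a_v\bigr)m_{4-|X|}(R_i-X)$, which is not a single monomial. For instance, $m_4=abcd+abc+abd+bcd$ on $\mathscr F_{5,1234}$, and $m_4=2bcd+bc+bd+cd$ on $\mathscr F_{2,234}$. Since $m_4$ enters $Q$ and hence every one of $A,B,C,D$, you must use $m_4=[x^8]\Delta_T$ as in your first step.

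\item \textbf{The ``$8\times8$ principal structure'' remark is also off.} The compressed matrix $M$ is at most $7\times7$, and the off-diagonal cofactors contribute non-principal minors. Simply expand $\operatorname{adj}(M)$.

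\item \textbf{The shift is the main device, not an occasional fallback that yields weak inequalities.} Many strict entries have mixed-sign coefficients before shifting; for example, $A|_{\mathscr F_{2,1234}}$ contains the term $+2cd$. Whether an entry is strict or weak is decided by the constant term after shifting.

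\item \textbf{The signs hold on $a_i\ge 1$, not on the whole positive orthant.} For example, $G|_{\mathscr F_{7,134}}=-7ab-3ac-a-12bc-10b-10c+10$ is positive for small positive real arguments.

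\item \textbf{Only the non-asterisk entries are claimed.} There is nothing to fill in for the asterisked ones. The $G$ column, which you expected to be the main obstacle, needs only two short verifications.
\end{enumerate}
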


\vspace{-16pt}
\begin{table}[H]
\centering
\small
\setlength{\tabcolsep}{5pt}
\renewcommand{\arraystretch}{1.12}
\caption{Sign sieve for the remaining positive families.  An asterisk means that no fixed sign is asserted.}\label{tab:positive-sign-sieve}

\begin{tabular}{c|ccccc}
\hline
Family & \(A\)&\(B\)&\(C\)&\(D\)&\(G\)\\
\hline
\(\mathscr F_{2,234}\)   & \(>0\)&\(<0\)&\(>0\)&\(>0\)&\(*\)\\
\(\mathscr F_{2,1234}\)  & \(<0\)&\(>0\)&\(*\)&\(<0\)&\(*\)\\
\(\mathscr F_{3,1234}\)  & \(*\)&\(>0\)&\(>0\)&\(*\)&\(>0\)\\
\(\mathscr F_{4,34}\)    & \(>0\)&\(<0\)&\(>0\)&\(>0\)&\(*\)\\
\(\mathscr F_{4,134}\)   & \(>0\)&\(*\)&\(>0\)&\(>0\)&\(*\)\\
\(\mathscr F_{4,1234}\)  & \(<0\)&\(>0\)&\(>0\)&\(>0\)&\(*\)\\
\(\mathscr F_{5,1234}\)  & \(<0\)&\(\geq0\)&\(\leq0\)&\(<0\)&\(*\)\\
\(\mathscr F_{5,2345}\)  & \(<0\)&\(<0\)&\(>0\)&\(*\)&\(*\)\\
\(\mathscr F_{6,1234}\)  & \(*\)&\(\geq0\)&\(>0\)&\(*\)&\(*\)\\
\(\mathscr F_{7,134}\)   & \(=0\)&\(*\)&\(>0\)&\(>0\)&\(<0\)\\
\(\mathscr F_{7,1234}\)  & \(<0\)&\(*\)&\(>0\)&\(*\)&\(*\)\\
\(\mathscr F_{7,1346}\)  & \(<0\)&\(*\)&\(>0\)&\(*\)&\(*\)\\
\hline
\end{tabular}
\end{table}

\begin{proof}
For each non-\(*\) entry in the first four columns, substitution of the corresponding explicit formulas for \(E\), \(Q\), and \(m_4\) into the definitions of \(A,B,C,D\) gives an exact polynomial identity in the pendant parameters. More precisely, write every positive pendant parameter as \(a_i=1+x_i, x_i\in\mathbb Z_{\ge0}.\) After substitution and collection, each expression asserted to be strictly positive is a polynomial in the \(x_i\) with nonnegative coefficients and positive constant term, while each expression asserted to be nonnegative has nonnegative coefficients and zero constant term. The negative and nonpositive cases are obtained by applying the same criterion to the negative of the corresponding expression. The only identically zero entry is $A=0$ on $\mathscr F_{7,134}$. This verifies all non-\(*\) entries in the columns \(A,B,C,D\), including the weak inequalities at boundary parameter values.

For illustration, representative computations are
\[
A\big|_{\mathscr F_{2,1234}}
=
-2\bigl(
3abcd+abc+abd+3acd+ac+ad-cd
\bigr)<0,
\]
\[
B\big|_{\mathscr F_{3,1234}}
=
4a\bigl(
2bcd+bc+bd+cd-1
\bigr)>0,
\]
\[
C\big|_{\mathscr F_{5,1234}}
=
-4bcd(a-1)\le0,
\]
and
\[
D\big|_{\mathscr F_{4,134}}
=
2\bigl(
6abc+3ab+2ac+a+10bc+b+c
\bigr)>0.
\]
Here \(a,b,c,d\) denote the positive pendant parameters of the corresponding family. In the first expression, \(3acd-cd=cd(3a-1)>0,\) while the parenthesized factor in the second is positive for \(b,c,d\ge1\). The third expression illustrates a weak inequality, with equality possible at \(a=1\), whereas the fourth is manifestly positive. The remaining non-\(*\) entries in the first four columns are verified in exactly the same way; no further case distinction is required.

It remains only to justify the two asserted signs in the last column \(G\). For \(T\in\mathscr F_{3,1234}\), write $a=1+x, b=1+y, c=1+z, d=1+w$, where \(x,y,z,w\in\mathbb Z_{\ge0}\).  Direct substitution gives
\[
\begin{aligned}
G(T)={}&
12xyzw+18xyz+18xyw+30xy
 +18xzw+30xz+30xw+54x \\
&+30yzw+24yz+24yw+16y
 +24zw+16z+16w+24,
\end{aligned}
\]
which is strictly positive. For \(T\in\mathscr F_{7,134}\), with positive pendant parameters \(a,b,c\), direct expansion gives
\(
    G(T)
    =
    -7ab-3ac-a-12bc-10b-10c+10.
\)
Since \(a,b,c\ge1\), $G(T)\le -33<0$.

Hence every asserted sign in Table~\ref{tab:positive-sign-sieve} holds.
\end{proof}

\medskip
\noindent\textit{The two difficult \texorpdfstring{\(R_5\)--\(R_8\)}{R5--R8} comparisons}
\medskip

\begin{lemma}\label{lem:R5-R8-1345}
The families $\mathscr F_{5,2345}$ and $\mathscr F_{8,1345}$ are separated by the spectral-walk data.
\end{lemma}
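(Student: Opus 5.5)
We argue by contradiction: suppose $T=R_5(\mathbf a)\in\mathscr F_{5,2345}$ and $T'=R_8(\mathbf b)\in\mathscr F_{8,1345}$ have the same spectral--walk data. By Proposition~\ref{prop:I-four-equivalence} this means $\mathcal I(T)=\mathcal I(T')$.

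On $R_5$ (edges $12,15,35,45$) we write the positive parameters as $a,b,c,d$ at $v_2,v_3,v_4,v_5$. Here $v_2$ is the end of the long arm, $v_3,v_4$ are the two short leaves, and $v_5$ is the centre. Thus $\operatorname{Aut}(R_5)$ only swaps $b,c$, and we pass to $s=b+c$ and $p=bc$. On $R_8=P_5$ (edges $12,13,25,45$, path $3\,1\,2\,5\,4$) we write the parameters as $x,y,z,w$ at $v_1,v_3,v_4,v_5$. By direct substitution in Proposition~\ref{prop:fixed-core-compression}, exactly as in the other comparisons of this section, we expand $m_2,m_3,m_4$ and $\eta_3,\dots,\eta_8$ on both sides as integer polynomials in these parameters.

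The elimination proceeds from the lowest-degree invariants upward, mirroring the Row~A argument of Lemma~\ref{lem:zero-eta8-stratum}. Equality of $n$ gives a linear relation $a+s+d+1=x+y+z+w$. Equality of $m_2$ then yields a bilinear relation. Since $m_2$ is $\binom{n-1}{2}$ minus the sum of $\binom{\deg}{2}$, it amounts to equality of the degree-square sums. Next we use equality of $m_4$ and of $E=\eta_8$. These are the top coefficients, and on both families they factor well: for instance $m_4(T)$ is a multiple of $ap\,d$-type products plus lower terms, while $m_4(T')=xyzw$-type terms plus corrections. From them we solve for one or two parameters of $T'$ in terms of those of $T$, for example $y$ and $x+w$. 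We also use $K$ and $Q$ from \eqref{eq:EKQ-definition}, since their ratios were already close to separating these families. Concretely, we first compute $Q-\lambda E$ and $K-\mu E$ on both families for the values of $\lambda,\mu$ where the earlier $R_5$/$R_8$ sieve failed, hoping for a linear combination whose sign is constant and opposite on the two families, in the manner of Lemma~\ref{lem:preliminary-positive-separation}. For this sign check we substitute $a_i=1+x_i$ and inspect the coefficients.

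If no single sign certificate exists, we fall back on exact elimination. After using $n,m_2,m_4,\eta_8$, we substitute into $m_3$, $\eta_6$ and $\eta_7$. The aim is to reach an identity of the form $(\text{positive factor})\cdot(\text{linear form})=0$, or a forced rational value strictly between consecutive integers, as in Row~B, where $1<c<2$ gave the contradiction. The remaining invariants $\eta_5,\eta_4,\eta_3$ then serve to kill any surviving boundary case, for instance parameters equal to $1$. Positivity of all parameters finishes the contradiction.

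The main obstacle is the middle step. The high-degree equations coupling four unknowns on each side are genuinely nonlinear, so one must choose the right combination of $m_4$, $\eta_8$ and $\eta_7$ to obtain a factorization. I expect $Q$, together with the difference $m_4-E$-type quantities, to supply that factorization, and that is where the effort will go.
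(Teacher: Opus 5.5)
Your proposal is a search plan rather than a proof. No invariant is actually evaluated on either family, and the step you defer as ``the main obstacle'' is the whole content of the lemma. Several concrete claims are also off.
\begin{itemize}
\item Both $R_5$ and $R_8$ have five vertices, so equality of $n$ gives $a+b+c+d=x+y+z+w$, with no $+1$.
\item Equality of $m_4$ and $E$ does not let you solve for individual parameters of $T'$. It only gives $abcd=xyzw$ and $bc(a+d)=yz(x+w)$.
\item Your first route, a sign certificate built from $Q-\lambda E$ and $K-\mu E$, cannot work for any $\lambda,\mu$, nor for any linear combination of $Q,K,m_4,E$. On the trees with $a=d=t$ and with $x=w=t$, one has $Q/E=0$, $K/E=2+6/t$ and $m_4/E=1+2/t$ on both sides. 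So every such combination has the same sign on the two families. This is exactly why this pair survives the preliminary sieve.
\end{itemize}

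You are right that $Q$ is the first lever, but the argument needs a specific chain that you do not supply.
\begin{enumerate}
\item Since $Q\equiv 0$ on $\mathscr F_{8,1345}$ and $Q=2bc(a-d)$ on $\mathscr F_{5,2345}$, one gets $a=d=:t$.
\item Then $K/E$, $E$ and $n$ give $xw=t(x+w)/2$, $yz=2t\,bc/(x+w)$ and $y+z=2t+b+c-(x+w)$.
\item AM--GM gives $x+w\ge 2t$. The equality case $x=w=t$ is excluded because then $m_2(T)-m_2(T')=-1$. So $x+w=2t+h$ with $h\ge 1$.
\item Equality of $\eta_6$ fixes the mixed term $xy+zw$; this is where the reflection of $P_5$ enters.
\item Then $m_2$ and $m_3$ are linear in $(bc,b+c)$ and determine both as rational functions of $(t,h)$.
\item Only at this point does $W_3$ yield $W_3(T)-W_3(T')=P(t,h)/((4t^2+2th+8t+3h)h)$, with $P$ cubic in $h$.
\end{enumerate}

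The decisive feature your plan does not foresee is that $P$ has no fixed sign on the positive quadrant. We have $P(1,h)>0$ and $P(3,1)=111$, but $P(3,3)=-483$ and $P<0$ for all $t\ge 5$. So the contradiction is arithmetic, not a positivity statement or a ``value strictly between two integers'' statement. One must show that the real curve $P=0$ avoids lattice points:
\begin{itemize}
\item $t=1,2$ give positive values;
\item $t=3$ changes sign after $h=2$, and $t=4$ after $h=1$, with no integer zero;
\item for $t\ge 5$ a coefficient bound shows $P<0$.
\end{itemize}
Your anticipated endings, and in particular ``positivity of all parameters finishes the contradiction,'' are not available here.
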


\begin{proof}
Suppose that $T=R_5(a,b,c,d)$, $T'=R_8(\alpha,\beta,\gamma,\delta)$ have equal data, with supports \(2345\) and \(1345\), respectively. Put \(s=b+c, q=bc,\)
and
\[
u=\alpha+\delta,\qquad
v=\alpha\delta,\qquad
r=\beta+\gamma,\qquad
w=\beta\gamma,\qquad
\ell=\alpha\beta+\gamma\delta.
\] Since $Q(T)=2bc(a-d)$, $Q(T')=0$, we have \(a=d=:t\). Equality of \(K/E\), \(E\), and \(n\) gives
$v=\frac{tu}{2}, w=\frac{2tq}{u}, r=2t+s-u$. Since \(\alpha,\delta>0\), \(u^2=(\alpha+\delta)^2\ge4\alpha\delta=4v=2tu,\) and hence \(u\ge2t\). If \(u=2t\), direct substitution into the two formulas for \(m_2\) gives \(m_2(T)-m_2(T')=-1,\) a contradiction, so write $u=2t+h$, $h\in\mathbb Z_{>0}$. Equality of the two values of \(\eta_6\) determines the remaining mixed term: \(\ell=t^2s+2tq+ts+4q-vr.\) After substituting
\(
u=2t+h,
v=\frac{tu}{2},
w=\frac{2tq}{u},
r=2t+s-u
\)
and the above expression for \(\ell\), the equalities of \(m_2\) and \(m_3\) are linear in \(q\) and \(s\). Solving them gives
\[
q=
\frac{(2t+h)(th+2)}
     {2(4t^2+2th+8t+3h)}
\]
and
\[
s=
\frac{
6t^3h+7t^2h^2+16t^2h-4t^2
+2th^3+15th^2+6th-8t
+3h^3+3h^2-2h
}{
(4t^2+2th+8t+3h)h
}.
\]
All denominators are strictly positive since \(t,h>0\). Hence \(q\) and \(s\) are uniquely determined. 

Substitution in the two expressions for the total-walk count \(W_3\) gives \[W_3(T)-W_3(T')=\frac{P(t,h)}{(4t^2+2th+8t+3h)h},\] where
\[
P(t,h)=(-t^3-t^2+3t+6)h^3+(-2t^4-4t^3+12t^2+24t-2)h^2+(8t^3+16t^2-4t-4)h-8t(t+2).
\]

Moreover, \(P(1,h)=7h^3+28h^2+16h-24>0, P(2,h)=30h^2+116h-64>0.\) For \(t=3\), \(P(3,h)=-21h^3-92h^2+344h-120.\) Its values at \(h=1,2\) are \(111\) and \(32\), while for
\(h\ge3\), \(21h^3+92h^2>344h,\) so \(P(3,h)<0\). Similarly, \(P(4,h)=-62h^3-482h^2+748h-192,\) with \(P(4,1)=12\), whereas for \(h\ge2\), \(62h^3+482h^2>748h,\) and hence \(P(4,h)<0\). If \(t\ge5\), write $P(t,h)=A_3h^3+A_2h^2+A_1h+A_0$. Then \(A_3<0\), \(A_2<0\), \(A_0<0\), and \(A_3+A_2+A_1=-t(2t^3-3t^2-27t-23)<0.\) Since \(h\ge1\), \(A_3h^3+A_2h^2+A_1h\le h(A_3+A_2+A_1)<0,\) and hence \(P(t,h)<0\). Thus \(P(t,h)\neq0\), contradicting equality of \(W_3\).
\end{proof}

\begin{lemma}\label{lem:R5-R8-1234}
The families $\mathscr F_{5,2345}$ and $\mathscr F_{8,1234}$ are separated by the spectral-walk data.
\end{lemma}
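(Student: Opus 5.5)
We argue by contradiction, following the template of Lemma~\ref{lem:R5-R8-1345}. Suppose that \(T=R_5(a,b,c,d)\) with support \(2345\) and \(T'=R_8(\alpha,\beta,\gamma,\delta)\) with support \(1234\) have equal \(\mathcal I\)-data. We use the same symmetric abbreviations on the \(R_5\)-side, namely \(s=b+c\) and \(q=bc\). On the \(R_8\)-side we first fix the reflection of the path core \(R_8\), which interchanges the two ends of \(P_5\). The support \(1234\) contains the centre of the path and is not reflection-invariant, so the four parameters on the \(R_8\)-side are ordered and we keep them as \(\alpha,\beta,\gamma,\delta\).

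\emph{Step 1.} We compute \(Q=-\eta_7-4m_4\), \(E=\eta_8\), \(K\), \(n\) and \(m_2\) on both families by direct substitution into Proposition~\ref{prop:fixed-core-compression}. On \(\mathscr F_{5,2345}\) we have \(Q(T)=2bc(a-d)\). We expect \(Q\) on \(\mathscr F_{8,1234}\) to be a short monomial-type expression, since \(Q\) separates the \(R_8\)-families from everything except \(R_5\). Equating the two values of \(Q\), and then the values of \(E\) and \(K/E\), should give two or three relations expressing products such as \(\alpha\delta\) and \(\beta\gamma\) rationally in \(a,d,q\) and one free \(R_8\)-parameter. As in the previous lemma, an AM--GM inequality coming from positivity of the parameters should then force a slack variable \(h\ge 0\).

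\emph{Step 2.} We substitute these relations into the equalities of \(n,m_2,m_3,m_4\) and \(\eta_6\). After substitution they become linear in the remaining \(R_5\) symmetric functions \(s\) and \(q\). Solving this linear system gives \(s\) and \(q\) as explicit rational functions of at most two integer variables \((t,h)\), where \(t\) is the remaining distinguished parameter; the denominators must be shown positive. The boundary case \(h=0\), if it occurs, should be killed directly by the \(m_2\) difference, as happened with \(u=2t\) before.

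\emph{Step 3.} We plug the solved \(s,q\) into one unused invariant, \(W_3\) or equivalently \(\eta_3\), or failing that \(\eta_4\) or \(\eta_5\). This yields \(W_3(T)-W_3(T')=P(t,h)/D(t,h)\) with \(D>0\). We then show \(P(t,h)\neq0\) on positive integers. The method is to treat small \(t\) individually, as univariate polynomials in \(h\), and to show that for large \(t\) all coefficients \(A_j(t)\) have a fixed sign. Where a coefficient has the wrong sign, we dominate it using \(h\ge1\) via \(A_3h^3+A_2h^2+A_1h\le h(A_3+A_2+A_1)\).

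The main obstacle is Step 3. The resulting polynomial \(P\) may admit sign changes on a finite set of small \((t,h)\), and these must be checked by hand. If \(P\) genuinely vanishes at some integer point, the fallback is to check integrality instead: we verify that the solved \(q\) or \(s\) is not a positive integer there, or that \(s^2-4q\) is not a perfect square, so that \(b,c\) are not positive integers. Failing that, a further invariant such as \(\eta_4\) can be compared at the finitely many exceptional points.
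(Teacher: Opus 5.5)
Your top-level skeleton matches the paper's: derive relations from \(K/E\), \(Q\), \(E\) and \(n\), solve an affine system in \(b+c\) and \(bc\) coming from \(m_2,m_3,\eta_6\), and get a contradiction from \(W_3\). However, the template of Lemma~\ref{lem:R5-R8-1345} does not transfer, and the idea this case actually needs is missing.

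\textbf{The missing step: an Egyptian-fraction relation, not an AM--GM slack.} Here \(Q\) does not vanish on \(\mathscr F_{8,1234}\), so you do not get \(a=d\). Instead, equality of \(K/E\), \(Q\) and \(E\) forces
\(d=2\beta\), \(\alpha\gamma=2abc/\delta\), and \(\frac1a=\frac1d+\frac1\delta\).
No AM--GM slack appears. What is needed is the integer parametrization of this relation. Write \(d=g\rho\), \(\delta=g\sigma\) with \(\gcd(\rho,\sigma)=1\). Since \(\rho\sigma\) and \(\rho+\sigma\) are coprime, integrality of \(a\) forces \(g=k(\rho+\sigma)\). Hence \(a=k\rho\sigma\), \(d=k\rho(\rho+\sigma)\), \(\delta=k\sigma(\rho+\sigma)\).
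So there is no pair of free integer variables \((t,h)\) as in the previous lemma. The problem lives on the three-parameter integer set \((k,\rho,\sigma)\), and the natural continuous coordinate \(t=\rho/\sigma\) is only rational. Consequently ``treat small \(t\) individually'' is not a finite check. The degenerate case \(\rho=\sigma\) (which forces \(\rho=\sigma=1\) and \(\{\alpha,\gamma\}=\{b,c\}\)) must also be excluded separately, which the paper does via \(m_2\) and \(\eta_6\).

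\textbf{Steps 2 and 3 also fail as stated.}

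\emph{The system is not linear in \(b+c\), \(bc\) alone.} The parameters \(\alpha\) (at \(v_1\)) and \(\gamma\) (at the end \(v_3\)) enter \(m_2,m_3,\eta_6\) asymmetrically. After \(\alpha+\gamma\) and \(\alpha\gamma\) are fixed, the equations still contain \(\gamma\) linearly. The paper eliminates \(\gamma\) through the combinations \(D_3-(\delta+1)D_2\) and \(D_6+\beta(\delta+1)D_2\). It then recovers \(\gamma\) from \(D_2=0\), where its coefficient is \(-1\), and reduces \(\gamma^2\) using the quadratic with roots \(\alpha,\gamma\) before computing \(W_3\).

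\emph{Nothing here is sign-definite.} The determinant of the \(2\times2\) system is a positive multiple of \(D(k,\rho,\sigma)\). This takes both signs: \(D(1,1,2)=221\), but \(D(1,2,1)=-88\) and \(D(2,2,1)=-44\). So your ``denominators must be shown positive'' cannot be proved. Only nonvanishing holds, and the paper gets it from monotonicity in \(k\ge2\) plus a separate \(k=1\) analysis.

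\emph{The final numerator has real zeros.} In the coordinates \(t=\rho/\sigma\), \(u=k\sigma^2\), the final \(W_3\)-numerator satisfies \(H(2,1)<0<H(\tfrac12,4)\). It therefore has a curve of real zeros in the positive quadrant. No fixed-coefficient-sign argument in continuous variables can exclude a root. Your integrality fallback at ``finitely many exceptional points'' does not apply to a zero curve either.

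The paper excludes integer zeros only by exploiting the integer structure. It expands \(W(k,\rho,\sigma)=\sigma H(\rho/\sigma,k\sigma^2)\) in slack variables on three regions: \(\rho<\sigma\); \(\rho>\sigma,\ k\ge2\); and \(k=1,\ \rho-\sigma\ge2\). The remaining line \(k=1,\ \rho=\sigma+1\) is treated by hand.
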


\begin{proof}
Assume that \(T=R_5(a,b,c,d)\in\mathscr F_{5,2345}, T'=R_8(\alpha,\beta,\gamma,\delta)\in\mathscr F_{8,1234}\) have equal spectral-walk data. Write the \(R_5\)-parameters as \(a,b,c,d\), and put \(S=b+c, P=bc.\) Write the \(R_8\)-parameters as \(\alpha,\beta,\gamma,\delta\). Equality of \(K/E\) and \(Q\) gives \(d=2\beta, \frac1a=\frac1d+\frac1\delta.\) Equality of \(E\) and \(n\) further gives
\(
    \alpha\gamma=\frac{2aP}{\delta},
    \alpha+\gamma=a+S+\frac d2-\delta.
\)

We now put the reciprocal relation into integral form. Write \(d=g\rho, \delta=g\sigma, \gcd(\rho,\sigma)=1.\) Then \(a=\frac{g\rho\sigma}{\rho+\sigma}.\) Since \(\gcd(\rho\sigma,\rho+\sigma)=1,\) integrality of \(a\) implies \(g=k(\rho+\sigma)\) for some \(k\in\mathbb Z_{>0}\). Hence
\begin{equation}\label{eq:hard-param}
\begin{split}
a&=k\rho\sigma,\qquad
d=k\rho(\rho+\sigma),\qquad
\beta=\frac{k\rho(\rho+\sigma)}2,\qquad
\delta=k\sigma(\rho+\sigma),\\
\alpha\gamma &=\frac{2\rho P}{\rho+\sigma},\qquad
\alpha+\gamma =S+\frac{k(\rho-\sigma)(\rho+2\sigma)}2.
\end{split}
\end{equation}

It remains to exclude \(\rho=\sigma\). In that case coprimality forces \(\rho=\sigma=1.\) Then \(a=k, d=\delta=2k, \beta=k,\) and \(\alpha+\gamma=S, \alpha\gamma=P.\)
Since \(S=b+c\) and \(P=bc\), it follows that \(\{\alpha,\gamma\}=\{b,c\}.\) Equality of \(m_2\) then forces $\alpha=3k+1$. Substitution into the equality of \(\eta_6\) gives
\(0=-(4k^2+11k+3)\gamma+k^2+k<0,\) a contradiction. Hence \(\rho\ne\sigma.\)

Put $t:=\frac{\rho}{\sigma}, u:=k\sigma^2, L:=\frac{u(t-1)(t+2)}2$. Thus $a=tu, d=t(t+1)u, \beta=\frac{t(t+1)u}{2},\delta=(t+1)u$, and $\alpha+\gamma=S+L, \alpha\gamma=\frac{2tP}{t+1}$. 

Let \(D_2:=m_2(T)-m_2(T'), D_3:=m_3(T)-m_3(T'), D_6:=\eta_6(T)-\eta_6(T').\) After substituting the preceding parametrization, the combinations \(D_3-(\delta+1)D_2, D_6+\beta(\delta+1)D_2\) give a \(2\times2\) affine system in \(S\) and \(P\). Its coefficient determinant is $D_0=\frac{4(t+1)^2u}{\sigma^2}D(k,\rho,\sigma)$, where
\[
\begin{aligned}
D(k,\rho,\sigma) ={}& k^3 \rho^2 \sigma^2 (\rho^2 - \sigma^2)^2 
- 2k^2 \rho \sigma^2 (\rho + 2\sigma)(\rho^2 - \sigma^2) \\
& + k(-\rho^4 + 2\rho^3\sigma - 15\rho^2\sigma^2 - 10\rho\sigma^3 + 16\sigma^4) 
- 2\rho^2 + 6\rho\sigma - 16\sigma^2.
\end{aligned}
\]
We claim that \(D(k,\rho,\sigma)\ne0\). If \(\rho<\sigma\), put \(X=k-1, Y=\sigma-\rho-1, Z=\rho-1.\) Direct expansion gives \(D(k,\rho,\sigma)=221+P_{<}(X,Y,Z),\)
where \(P_{<}\) has nonnegative integer coefficients and zero constant term. Hence \(D(k,\rho,\sigma)>0\). Suppose \(\rho>\sigma\), and write \(\rho=\sigma+h\). For \(k\ge2\), direct expansion shows that
\[
    \left.\frac{\partial D}{\partial k}\right|_{k=2}>0,
    \quad
    \frac{\partial^2D}{\partial k^2}>0,
     (k\ge2).
\]
Hence \(D\), viewed as a cubic in \(k\), is strictly increasing on \(k\ge2\). Its value at \(k=2\) is positive except when \((h,\sigma)=(1,1)\); in that exceptional case  $D(2,2,1)=-44, D(3,2,1)=336$, so no integral \(k\ge2\) gives \(D=0\). 

For \(k=1\) and \(h\ge2\), put \(p=h-2, q=\sigma-1,\) so that \(p,q\in\mathbb Z_{\ge0}\). Direct expansion gives
\[
D(1,\sigma+h,\sigma)
=
144+780p+872q+P_1(p,q),
\]
where \(P_1(p,q)\) is a polynomial with nonnegative integer coefficients and zero constant and linear terms. Hence \(D(1,\sigma+h,\sigma)>0, (h\ge2).\)

It remains to consider \(h=1\). We have \(D(1,2,1)=-88, D(1,3,2)=-483.\) For \(\sigma\ge3\), write \(\sigma=z+3\), where \(z\in\mathbb Z_{\ge0}\). Then
\[
D(1,\sigma+1,\sigma)={}4z^6+72z^5+523z^4+1912z^3+3518z^2+2640z+96>0.
\]
Thus \(D\ne0\) also when \(k=1\). Therefore \(D(k,\rho,\sigma)\ne0\) in all cases, and hence \(D_0\ne0\). Thus \(S\) and \(P\) are uniquely determined.

The coefficient of \(\gamma\) in \(D_2\) is \(-1\). Hence the equality \(D_2=0\) determines \(\gamma\) uniquely. Put \(\Lambda(t,u):=\frac{D_0}{4u(t+1)^2}.\) Since \(D_0\ne0\), we have \(\Lambda(t,u)\ne0\).
Solving the linear equation for \(\gamma\), and using \(\gamma^2-(S+L)\gamma+\frac{2tP}{t+1}=0\) to eliminate \(\gamma^2\), direct substitution into the common third total-walk count \(W_3\) yields \[W_3(T)-W_3(T')=\frac{H(t,u)}{u(t+1)\Lambda(t,u)}, \Lambda(t,u)\neq0,\]where
\[
\begin{aligned}
H(t,u)={}&
t^3(t-1)^2(t+1)^4(t+2)u^6
+t^2(t-1)(t+1)^3(4t^3+7t^2-13t-10)u^5\\
&-t(t+1)^2(3t^5-16t^4+5t^3+90t^2-10t-40)u^4\\
&-2(t+1)^2(t^5+12t^4-9t^3+28t^2+36t-16)u^3\\
&-4(3t^5+21t^4+27t^3+23t^2+18t+8)u^2
 -8(3t^3+12t^2+9t+4)u-16(t+2).
\end{aligned}
\]
To prove \(H\neq0\), clear powers of \(\sigma\) and put $W(k,\rho,\sigma):=\sigma\,H\!\left(\frac{\rho}{\sigma},k\sigma^2\right)$. We treat the three unbounded parameter ranges separately. In each range we translate the positive-integer constraints to a nonnegative orthant by introducing slack variables. Thus, when
\(\rho<\sigma\), we write $X=k-1, Y=\sigma-\rho-1, Z=\rho-1$, so that \((k,\rho,\sigma)=(X+1,Z+1,Y+Z+2), X,Y,Z\in\mathbb Z_{\ge0}.\)
Similarly, for \(\rho>\sigma\) and \(k\ge2\), we write $X=k-2, Y=\rho-\sigma-1, Z=\sigma-1$, whereas for \(k=1\) and \(\rho-\sigma\ge2\) we write $Y=\rho-\sigma-2, Z=\sigma-1$. The constants below are simply the values of \(W\) at the corresponding base points: \(W(1,1,2)=19324,W(2,2,1)=136800,W(1,3,1)=206336.\)
After subtracting these base values, direct expansion shows that each of \(W(X+1,Z+1,Y+Z+2)-19324, W(X+2,Y+Z+2,Z+1)-136800, W(1,Y+Z+3,Z+1)-206336\) is a polynomial with nonnegative integer coefficients and zero constant term in the displayed variables.

These cover respectively \(\rho<\sigma\), \(\rho>\sigma,\ k\geq2\), and \(\rho-\sigma\geq2,\ k=1\). In the remaining case \(k=1,\rho=\sigma+1\), the value is negative at \(\sigma=1\), while after writing \(\sigma=z+2\) it becomes a polynomial in \(z\) with strictly positive coefficients.  Therefore \(W\), and hence \(H\), never vanishes. This contradicts equality of \(W_3\).
\end{proof}

\medskip
\noindent\textit{Remaining cross-family comparisons}
\medskip

For brevity, put $F_2=\mathscr F_{2,1234}, F_3=\mathscr F_{3,1234}, F_4=\mathscr F_{4,1234}, F_5=\mathscr F_{5,1234}, F_6=\mathscr F_{6,1234}, F_{7a}=\mathscr F_{7,1234}, F_{7b}=\mathscr F_{7,1346}$. Combining the global sign sieve with the cross-family separations established above leaves exactly the following nine unresolved comparisons:
\[
\begin{aligned}
F_2&\text{ vs. }F_3,  F_5,  F_{7a},  F_{7b},\\
F_3&\text{ vs. }F_4,F_{7b},\\
F_4&\text{ vs. }F_6,F_{7b},\\
F_6&\text{ vs. }F_{7b}.
\end{aligned}
\]
Indeed, any other pair is either already covered by a preceding separation result or has disjoint ranges in at least one of the columns \(A,B,C,D,G\) of Table~\ref{tab:positive-sign-sieve}.

\vspace{-8pt}
\begin{table}[H]
\centering
\small
\setlength{\tabcolsep}{3pt}
\renewcommand{\arraystretch}{1.33}
\caption{Final linear separation certificates.  After replacing each positive parameter by \(1+x\), the displayed \(L\) is a polynomial with nonnegative coefficients and the stated positive lower bound on the first family; its negative has the same property on the second.}
\label{tab:final-linear-separators}
\begin{tabularx}{\textwidth}{c|X|c}
\hline
Pair & Separator \(L\) & Bounds\\
\hline
\(F_2,F_3\)
&
\(2m_3+4m_4-\eta_3-2\eta_4-\eta_6+\eta_7\)
&
\(L\geq14,\ -L\geq6\)
\\

\(F_2,F_{7a}\)
&
\(-n+m_3+\eta_7+3\eta_8\)
&
\(L\geq5,\ -L\geq4\)
\\

\(F_2,F_{7b}\)
&
\(-m_2+2m_3+4m_4+\eta_6+\eta_7+3\eta_8\)
&
\(L\geq2,\ -L\geq2\)
\\

\(F_3,F_4\)
&
\(-m_2+8m_3-9m_4-\eta_4-\eta_5-\eta_6-\eta_7-\eta_8\)
&
\(L\geq12,\ -L\geq8\)
\\

\(F_3,F_{7b}\)
&
\(3-2n+5m_3-6m_4-\eta_5-\eta_7\)
&
\(L\geq1,\ -L\geq6\)
\\

\(F_4,F_6\)
&
\(1+n-4m_2+4m_3+2\eta_4-\eta_5
  +3\eta_6-\eta_7+3\eta_8\)
&
\(L\geq1,\ -L\geq11\)
\\

\(F_4,F_{7b}\)
&
\(-n-m_2+7m_3-10m_4-\eta_5+\eta_6-2\eta_7+\eta_8\)
&
\(L\geq2,\ -L\geq1\)
\\

\(F_6,F_{7b}\)
&
\(-3-2m_2+8m_3-10m_4-\eta_3-2\eta_4-\eta_5
 -2\eta_6-\eta_7-2\eta_8\)
&
\(L\geq9,\ -L\geq1\)
\\
\hline
\end{tabularx}
\end{table}

\begin{lemma}\label{lem:eight-linear-separators}
The 8 pairs in Table~\ref{tab:final-linear-separators} are separated by the displayed data-determined linear forms.
\end{lemma}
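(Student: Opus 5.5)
The plan is to verify each of the eight rows of Table~\ref{tab:final-linear-separators} by the same positivity mechanism used in the proof of Lemma~\ref{lem:preliminary-positive-separation}.

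Fix a row with families \(\mathscr F\) and \(\mathscr F'\) and separator \(L\). Every coordinate of \(\mathcal I(T)\), namely \(n,m_2,m_3,m_4,\eta_3,\ldots,\eta_8\), is an explicit integer polynomial in the positive pendant parameters of a canonical family. These polynomials are obtained from Proposition~\ref{prop:fixed-core-compression} by coefficient extraction, exactly as described at the start of Section~\ref{sec:matching-four-reconstruction}. Since \(L\) is an integer affine combination of these coordinates, it is data-determined: by Proposition~\ref{prop:I-four-equivalence}, two trees with the same spectral-walk data have the same value of \(L\).

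It therefore suffices to show that \(L>0\) on every tree of \(\mathscr F\) and \(L<0\) on every tree of \(\mathscr F'\). Then no tree of \(\mathscr F\) can share its data with a tree of \(\mathscr F'\).

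To prove each sign, the key steps are as follows.

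1. Substitute the family's labelled adjacency matrix \(A_{R_i}\) and its support pattern into \(M=I-xA_{R_i}-x^2D_{\mathbf a}\).
2. Collect \(L\) as a polynomial in the positive parameters \(a,b,c,d\), treating \(F_2,F_3,F_4,F_5,F_6\) and \(F_{7a}\) via support \(1234\), and \(F_{7b}\) via support \(1346\).
3. Substitute \(a_i=1+x_i\) with \(x_i\in\mathbb Z_{\ge0}\).
4. Check that the resulting polynomial (or its negative, for the second family) has nonnegative coefficients.
5. Check that its constant term, which is the value at all parameters equal to \(1\), equals at least the bound stated in the table, for instance \(14\) and \(6\) for the pair \(F_2,F_3\).

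Since all \(x_i\ge0\), the value of such a polynomial is at least its constant term. The asserted strict bounds then follow immediately.

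The step I expect to be the main obstacle is confirming the nonnegativity of every coefficient after the shift. Individual coordinates such as \(\eta_5\) and \(\eta_7\) carry sign-indefinite terms, and \(L\) was chosen precisely so that the negative monomials cancel. If a stray negative coefficient survives in some row, I would first try grouping it with a dominating monomial, as in the argument \(3acd-cd=cd(3a-1)>0\) in the proof of Lemma~\ref{lem:preliminary-positive-separation}. Failing that, I would split off the finitely many small-parameter cases and handle them by direct evaluation, as was done for \(P(t,h)\) in Lemma~\ref{lem:R5-R8-1345}.

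The pair \(F_2\) versus \(F_5\) is not in the table and is not claimed by this lemma. It will be handled separately.
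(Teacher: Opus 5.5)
Your proposal is correct and is essentially the paper's own proof. In both, you substitute the fixed-core expressions for each family and write every positive parameter as \(1+x_i\). You then check that \(L\) on the first family and \(-L\) on the second collect to polynomials with nonnegative coefficients and the displayed positive constant terms, so the data-determined form \(L\) has opposite signs on the two families. Your fallback strategies (grouping terms, splitting off small parameter cases) are not needed, since the paper asserts that direct positive-coefficient collection works in all eight rows.
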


\begin{proof}
Substitute the appropriate fixed-core expressions. In each row, write the four positive parameters as
\[
a=1+x,\qquad b=1+y,\qquad c=1+z,\qquad d=1+w,
\]
where \(x,y,z,w\in\mathbb Z_{\ge0}\). The variables \(x,y,z,w\) are local to each family evaluation. Direct collection gives
\[
L|_{\mathrm{first}}=c_+ + P_+(x,y,z,w),\quad
-L|_{\mathrm{second}}=c_- + P_-(x,y,z,w),
\]
where \(P_+\) and \(P_-\) have nonnegative coefficients and zero constant term, and \(c_+,c_->0\) are the constants displayed in the last column. For example, for the pair \(F_2,F_3\), \[L|_{F_2}={}2wxy+2wxz+5wx+2wyz+wy+w + 2xyz+5xz+2x+yz+4y+z+14,\] whereas
\[
-L|_{F_3}
=
2(3wy+3wz+5w+3yz+5y+5z+3).
\]
Thus \(L|_{F_2}\ge14\) and \(-L|_{F_3}\ge6\).
The other seven rows are verified by the same direct
positive-coefficient collection. Hence
\(L|_{\mathrm{first}}\ge c_+>0, L|_{\mathrm{second}}\le -c_-<0.\) Since \(L\) is determined by the spectral-walk data, the two families cannot have equal data.
\end{proof}

The ninth comparison requires a short integer argument.

\begin{lemma}\label{lem:F2-F5-separation}
The families $F_2=\mathscr F_{2,1234}$ and $F_5=\mathscr F_{5,1234}$ are separated by the spectral-walk data.
\end{lemma}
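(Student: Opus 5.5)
We argue by contradiction. Suppose $T=R_5(a,b,c,d)\in F_5$ and $T'=R_2(\alpha,\beta,\gamma,\delta)\in F_2$ have the same $\mathcal I$-data. Parameters are listed in the increasing order of the support $1234$. On $R_5=S(2,1,1)$ the vertex $v_1$ is distinguished (it is the middle vertex of the long arm) and $v_3,v_4$ are interchangeable. On $R_2=S(3,1,1)$ the vertex $v_1$ is the degree-two vertex adjacent to the centre and $v_3,v_4$ are interchangeable. The sign sieve cannot separate these families: on $F_5$ we have $B\ge0$ and $C\le0$, with equality exactly when $a=1$, while on $F_2$ the column $C$ carries no sign. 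I therefore first exploit the boundary behaviour. Since $C|_{F_5}=-4bcd(a-1)$, I expect either $C=0$ to force $a=1$, or $C|_{F_2}$ to be a positive polynomial plus a single sign-indefinite term. Whichever holds, equality of $C$ together with equality of $A$, $B$ and $E=\eta_8$ should yield one simple rational relation between the two parameter sets, analogous to $\tfrac1a=\tfrac1d+\tfrac1\delta$ in Lemma~\ref{lem:R5-R8-1234}.

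Next I reduce to symmetric functions. On each side the interchangeable pair enters only through its sum and product. Put $s=c+d$, $p=cd$ on $F_5$ and $\sigma=\gamma+\delta$, $\pi=\gamma\delta$ on $F_2$. I then write the equalities of $n$, $m_2$, $m_3$, $m_4$ and $\eta_8$ in these variables. The equalities of $n$ and $m_2$ are linear in the pair sums. The equalities of $m_3$ and $m_4$ are linear in the products once the distinguished parameters are fixed. I would use them to express $\sigma$ and $\pi$, and one of $\alpha,\beta$, in terms of $(a,b,s,p)$ and one remaining $F_2$-parameter. The relation from the first step should collapse this to a few free integers.

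The integer argument comes last. I substitute into a further coordinate, namely $\eta_5$ or $\eta_6$ (or $W_3$, as in Lemmas~\ref{lem:R5-R8-1345} and \ref{lem:R5-R8-1234}). This should produce a factored equation such as $(\text{linear})\cdot(\text{positive})=0$, or a divisibility condition forcing some parameter strictly between consecutive integers. The model is the $\mathscr F_{3,234}$ versus $\mathscr F_{6,234}$ argument in Lemma~\ref{lem:zero-eta8-stratum}, where $3v(X-1)=7r$ led to $(3k-5)(X-1)=0$. A discriminant check like the one used for $\mathscr F_{6,234}$ may also be needed: it would verify that the recovered sum and product for $\{\gamma,\delta\}$ cannot both be realised by positive integers.

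The main obstacle is locating a combination of coordinates in which the residual equation factors cleanly, since the previous comparison needed no linear separator here. I would search for it by computing $L|_{F_2}-L|_{F_5}$ symbolically over the span of the $\mathcal I$-coordinates with small integer coefficients. If no single form works, I would fall back to the case split $a=1$ versus $a\ge2$ suggested by $C$, handling each case by the positivity collection method of Lemma~\ref{lem:eight-linear-separators}.
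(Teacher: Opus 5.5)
Your proposal is a search plan, not yet a proof. Each decisive step is conditional (``I expect'', ``should yield'', ``I would search''). The one concrete handle you name, the sign of $C=-\eta_7-2m_4-2\eta_8$, cannot carry the argument.

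In your notation, $C|_{F_5}=-4bcd(a-1)$. On the other side, $C=A+2m_4$ together with the expression for $A$ on $\mathscr F_{2,1234}$ in the proof of Lemma~\ref{lem:preliminary-positive-separation} gives
\[
C|_{F_2}=2\beta(\gamma+\delta)-4\gamma\delta(\beta+1)(\alpha-1).
\]
This is positive for $\alpha=1$ and negative for $\alpha\ge2$ (use $2\gamma\delta\ge\gamma+\delta$). So equality of $C$ only excludes the boundary cases $a=1$ and $\alpha=1$. In the remaining case $a,\alpha\ge2$ it says only that two negative numbers coincide.

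Your first step then gives nothing more. $A,B,C,D$ are linear combinations of $Q,m_4,E$, so ``equality of $A,B,C,E$'' is just equality of $\eta_7,m_4,\eta_8$. One finds $E|_{F_2}=5\alpha\beta\gamma\delta+\alpha\beta\gamma+\alpha\beta\delta+4\alpha\gamma\delta+\beta\gamma\delta$, which is not a monomial. So there is no evident analogue of the relation $1/a=1/d+1/\delta$ used for $R_5$ versus $R_8$.

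The elimination, the choice of a final coordinate, and the integrality argument are never carried out. Your fallback via positive-coefficient collection presupposes a linear form with fixed opposite signs on the two families, which is exactly what you have not found.

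The missing idea is to use data-determined linear forms that vanish identically on $F_5$. Equal data then impose equations on the $F_2$-parameters alone. The paper takes
\[
\begin{aligned}
\Phi_1&=128-32n+24m_2-12m_3+6m_4+4\eta_3+2\eta_5+\eta_7,\\
\Phi_2&=-384+96n-66m_2+10m_3-9\eta_3+6\eta_4-2\eta_5+4\eta_6+4\eta_8.
\end{aligned}
\]
Both vanish on $F_5$. In your notation they equal $-2\beta(\alpha-1)(\pi-\sigma)$ and $4\beta\bigl((\alpha-3)(\pi-\sigma)+2(\alpha-1)\bigr)$ on $F_2$. Hence $\alpha=1$ and $\gamma\delta=\gamma+\delta$, i.e.\ $\gamma=\delta=2$. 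The forced degeneracy lies on the $R_2$ side, not at $a=1$ on $F_5$ where your case split is aimed.

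The contradiction then comes from $\eta_3+4m_2$. It equals $2(3\alpha+\beta+2\gamma+2\delta+3)$ on $F_2$ and $2(ab+2a+b+2c+2d+2)$ on $F_5$. Together with equality of $n$, this gives $b(a-1)=\alpha-\beta-1=-\beta<0$, which is impossible.

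Your $C$ observation would also finish the argument once $\alpha=1$ is known, since then $C|_{F_2}=2\beta(\gamma+\delta)>0\ge C|_{F_5}$. What your plan lacks is the step that forces $\alpha=1$.
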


\begin{proof}
Suppose that \(T=R_2(a,b,c,d)\in F_2, T'=R_5(\alpha,\beta,\gamma,\delta)\in F_5\) have equal data. Put \(s=c+d\) and \(p=cd\). Guided by the fixed-core expressions, we seek data-determined linear combinations that vanish identically on \(F_5\) and factor simply on \(F_2\). The following two combinations have this property:
\[
\begin{aligned}
\Phi_1
 &:=
128-32n+24m_2-12m_3+6m_4
 +4\eta_3+2\eta_5+\eta_7,\\
\Phi_2
 &:=
-384+96n-66m_2+10m_3-9\eta_3
 +6\eta_4-2\eta_5+4\eta_6+4\eta_8.
\end{aligned}
\]
Direct substitution gives $\Phi_1(T')=\Phi_2(T')=0$, whereas $\Phi_1(T)=-2b(a-1)(p-s)$ and $\Phi_2(T)=4b\bigl((a-3)(p-s)+2(a-1)\bigr)$. Equality therefore gives
$(a-1)(p-s)=0$, $(a-3)(p-s)+2(a-1)=0$. If \(a=1\), the second equation gives \(p=s\); if \(p=s\), it gives \(a=1\). Hence \(a=1, p=s.\) Thus $cd=c+d$, $(c-1)(d-1)=1$, and positivity gives \(c=d=2\). 

Direct substitution also gives \(\eta_3(T)+4m_2(T)=2(3a+b+2c+2d+3),\) whereas \(\eta_3(T')+4m_2(T')=2(\alpha\beta+2\alpha+\beta+2\gamma+2\delta+2).\) Equality of the orders gives \(\alpha+\beta+\gamma+\delta=a+b+c+d+1.\) Substituting this into the equality of the preceding two expressions and cancelling common terms gives
\(\beta(\alpha-1)=a-b-1.\) Since \(a=1\), this becomes \(\beta(\alpha-1)=-b<0.\) This is impossible since \(\alpha,\beta\in\mathbb Z_{>0}\), so \(\beta(\alpha-1)\ge0\).
\end{proof}

\begin{theorem}\label{thm:matching-four-recovery}
Let \(T,T'\) be trees with $\nu(T)=\nu(T')=4$. If \(T\) and \(T'\) have the same spectral-walk data, then $T\cong T'$.
\end{theorem}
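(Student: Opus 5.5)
The proof is an assembly of the results established above. Let \(T,T'\) be trees with \(\nu(T)=\nu(T')=4\) and the same spectral-walk data. By Proposition~\ref{prop:spectral-walk-polynomials} they have the same polynomial pair \((\Delta_T,N_T)\). By Proposition~\ref{prop:I-four-equivalence} they therefore have the same finite invariant \(\mathcal I(T)=\mathcal I(T')\). By Proposition~\ref{prop:matching-four-classification}, write \(T=R_i(\mathbf a)\in\mathscr F_{i,S}\) and \(T'=R_j(\mathbf b)\in\mathscr F_{j,S'}\) for canonical families from Table~\ref{tab:matching-four-families}. It suffices to show \((i,S)=(j,S')\) up to \(\operatorname{Aut}(R_i)\). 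Then Proposition~\ref{prop:internal-reconstruction} gives \(\mathbf b=g\mathbf a\) for some \(g\in\operatorname{Aut}(R_i)\), and hence \(T\cong T'\).

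To show the two families coincide, I would split on the common value \(E=\eta_8\).

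\textbf{Case \(\eta_8=0\).} Both trees lie in the zero stratum, and Corollary~\ref{cor:zero-eta8-recovery}, resting on Lemma~\ref{lem:zero-eta8-stratum}, gives \(T\cong T'\) directly. In particular, a tree with \(\eta_8=0\) can never collide with one having \(\eta_8>0\).

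\textbf{Case \(\eta_8>0\).} Both trees lie among the fifteen positive families. I would separate them in stages.
\begin{enumerate}[label=(\roman*)]
\item Use \(K\) to isolate \(\mathscr F_{1,1234}\), since \(K=0\) exactly there.
\item Use \(Q\) and the ratio \(K/E\) to separate the two \(R_8\)-families from everything except \(\mathscr F_{5,2345}\). The two remaining comparisons are settled by Lemmas~\ref{lem:R5-R8-1345} and~\ref{lem:R5-R8-1234}.
\item Separate the two \(R_8\)-families \(\mathscr F_{8,1234}\) and \(\mathscr F_{8,1345}\) from each other. The argument above does not explicitly cover this pair, so I would check it by comparing \(Q\) or \(K/E\) between them, or else exhibit a linear positive-coefficient separator in the style of Table~\ref{tab:final-linear-separators}.
\item For the remaining twelve families, Lemma~\ref{lem:preliminary-positive-separation} separates every pair with disjoint sign patterns in some column among \(A,B,C,D,G\).
\item The nine pairs left over are handled by Lemma~\ref{lem:eight-linear-separators} and Lemma~\ref{lem:F2-F5-separation}.
\end{enumerate}
After these stages, equal \(\mathcal I\)-data force the same canonical family.

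The main obstacle is bookkeeping rather than new mathematics: certifying that the sign-sieve table really leaves only the nine listed pairs. I would verify this by enumerating all \(\binom{12}{2}=66\) pairs and, for each, exhibiting one column where the asserted sign ranges are disjoint, for example strict positive versus strict negative, or \(=0\) versus strict. I would also check that the \(R_1\)/\(R_8\) reductions cover all pairs involving those cores, including the \(R_8\)--\(R_8\) pair from stage (iii).

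Once family equality is established, Proposition~\ref{prop:internal-reconstruction} finishes the proof.
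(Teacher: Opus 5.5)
Your outline is the paper's own assembly, step for step: the same pair \((\Delta,N)\) and hence the same \(\mathcal I\); Corollary~\ref{cor:zero-eta8-recovery} when \(\eta_8=0\); then \(K\), \(Q\), \(K/E\), Lemmas~\ref{lem:R5-R8-1345}--\ref{lem:R5-R8-1234}, the sign sieve, and the nine residual pairs. Your stage (iii) correctly spots a comparison the paper never makes, and \(Q\) settles it, whereas \(K/E\) does not. On \(\mathscr F_{8,1345}\) one has \(Q=0\), as used in Lemma~\ref{lem:R5-R8-1345}. On \(\mathscr F_{8,1234}\), in the labelling of Table~\ref{tab:positive-recovery-II}, one has \(Q=-2abc=-2E/d<0\); this is the relation behind \(1/a=1/d+1/\delta\) in Lemma~\ref{lem:R5-R8-1234}. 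By contrast, \(K/E\) equals \(2+3/b\) on \(\mathscr F_{8,1234}\) and \(2+3(1/a+1/d)\) on \(\mathscr F_{8,1345}\), and both equal \(5\) at \(b=1\), resp.\ \(a=d=2\).

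The genuine gap is the step you call bookkeeping. If you actually run the \(66\)-pair check against Table~\ref{tab:positive-sign-sieve}, it fails. The rows of \(\mathscr F_{2,234}\) and \(\mathscr F_{4,34}\) are identical, and so are those of \(\mathscr F_{7,1234}\) and \(\mathscr F_{7,1346}\). The following pairs also overlap in every column (an asterisk overlaps everything, and \(\ge0\) overlaps \(>0\)):
\(\mathscr F_{2,234}\)--\(\mathscr F_{4,134}\), \(\mathscr F_{2,1234}\)--\(\mathscr F_{6,1234}\), \(\mathscr F_{3,1234}\)--\(\mathscr F_{4,134}\), \(\mathscr F_{3,1234}\)--\(\mathscr F_{6,1234}\), \(\mathscr F_{3,1234}\)--\(\mathscr F_{7,1234}\), \(\mathscr F_{4,134}\)--\(\mathscr F_{6,1234}\), \(\mathscr F_{4,1234}\)--\(\mathscr F_{7,1234}\), \(\mathscr F_{5,2345}\)--\(\mathscr F_{7,1234}\), \(\mathscr F_{5,2345}\)--\(\mathscr F_{7,1346}\), \(\mathscr F_{6,1234}\)--\(\mathscr F_{7,134}\), \(\mathscr F_{6,1234}\)--\(\mathscr F_{7,1234}\).
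None of these thirteen pairs is among the nine. None is covered by the \(R_1\)/\(R_8\) reductions or by a combined-row recovery certificate; only \(\mathscr F_{4,34}\)--\(\mathscr F_{4,134}\) is rescued that way. The paper's proof makes the same assertion (``removes all but nine''), so you have inherited its gap rather than closed it. As written, neither argument shows that equal \(\mathcal I\)-data force the same canonical family.

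Each of these pairs needs its own certificate. Some are cheap. On \(\mathscr F_{2,234}\) one has \(m_2=e_2+4e_1+5\) and \(m_4=2bcd+e_2\), where \(e_i\) are the elementary symmetric functions of \(b,c,d\). On \(\mathscr F_{4,34}\) one has \(m_2=c'd'+5(c'+d')+10\) and \(m_4=3c'd'+c'+d'\). Equality of \(n,m_2,m_4\) then forces \(2bcd+1=2c'd'\), which is impossible by parity. Others are not settled by any quantity introduced so far; for \(\mathscr F_{3,1234}\) versus \(\mathscr F_{6,1234}\), for instance, \(K/E=2+6/a\) on both. These require further computations of the type in Lemmas~\ref{lem:eight-linear-separators}--\ref{lem:F2-F5-separation}.
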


\begin{proof}
By Proposition~\ref{prop:spectral-walk-polynomials}, the two trees have the same pair \((\Delta,N)\), and hence the same tuple
\(\mathcal I\).
If \(\eta_8=0\), apply Corollary~\ref{cor:zero-eta8-recovery}. Suppose that \(\eta_8>0\). Proposition~\ref{prop:internal-reconstruction} gives injectivity
inside every positive canonical family.  The \(R_1\)-family is separated by \(K=0\); the preliminary certificates and Lemmas~\ref{lem:R5-R8-1345}--\ref{lem:R5-R8-1234} remove the first cross-family comparisons. Table~\ref{tab:positive-sign-sieve} removes all but nine of the remaining comparisons. These nine are settled by Lemmas~\ref{lem:eight-linear-separators} and \ref{lem:F2-F5-separation}. Thus \(T,T'\) lie in the same canonical family and have the same pendant vector modulo the relevant core automorphism. Hence \(T\cong T'\).
\end{proof}

\begin{proof}[Proof of Theorem~\ref{thm:main-rigidity}]
\leavevmode\\
Let \(T\) and \(T'\) have the same spectral--walk data and satisfy $\nu(T),\nu(T')\le 4$. Since \(T\) and \(T'\) are cospectral, $\Delta_T=\Delta_{T'}$. By Lemma~\ref{lem:sachs-forest}(Sachs' theorem), $\nu(T)=\frac12\deg\Delta_T=\frac12\deg\Delta_{T'}=\nu(T')$. If the common matching number is at most 2, apply Proposition~\ref{prop:matching-at-most-two}; if it is 3, apply Theorem~\ref{thm:matching-three-recovery}; and if it is 4, apply Theorem~\ref{thm:matching-four-recovery}. In either case \(T\cong T'\).
\end{proof}

% =========================================================
\section{Sharpness: a family with matching number 5}\label{sec:sharpness-section}
% =========================================================

We now prove Theorem~\ref{thm:sharpness}. Let $P_7$ be the path with vertices $v_1,v_2,\ldots,v_7$ in this order. For $\mathbf a=(a_1,\ldots,a_7)\in\mathbb{Z}_{\ge 0}^7$, write $P_7(\mathbf a)$ for the tree obtained from $P_7$ by attaching $a_i$ pendant vertices to $v_i$. The first collision found in the computation for $n=17$ and $\nu=5$ is
 $P_7(2,0,0,1,6,0,1)$ and $P_7(1,0,0,1,0,6,2)$. This suggests the following family. 
 
For every integer $m\ge 1$, define $T_m=P_7(2,0,0,1,m,0,1)$, and $T_m'=P_7(1,0,0,1,0,m,2)$.

\begin{proof}[Proof of Theorem~\ref{thm:sharpness}]
Let $R=P_7$, and let $A_R$ be the adjacency matrix of $R$ with respect to the vertex ordering $v_1,\ldots,v_7$. For $\mathbf c=(c_1,\ldots,c_7)$, put $D_{\mathbf c}=\diag(c_1,\ldots,c_7)$, and $M_{\mathbf c}(x) = I_7 - xA_R - x^2D_{\mathbf c}$. Set $\mathbf a=(2,0,0,1,m,0,1)$ and $\mathbf b=(1,0,0,1,0,m,2)$. Then $T_m=P_7(\mathbf a)$ and $T_m'=P_7(\mathbf b)$. 

By Proposition~\ref{prop:fixed-core-compression}, $\det(I-xA(T_m)) = \det M_{\mathbf a}(x)$ and $\det(I-xA(T_m')) = \det M_{\mathbf b}(x)$. A direct computation of the tridiagonal determinants gives $\det M_{\mathbf a}(x) = \det M_{\mathbf b}(x) = \Delta_m(x)$, where
\[
    \Delta_m(x) = (1-2x^2)\left[1-(m+8)x^2+(6m+18)x^4-(9m+9)x^6+(2m+1)x^8\right].
\]
Therefore $\Spec(T_m)=\Spec(T_m')$.

Next, by Proposition~\ref{prop:fixed-core-compression}, 
\[F_{T_m}(x) = |\mathbf a| + (\one+x\mathbf a)^{\mathsf T} M_{\mathbf a}(x)^{-1} (\one+x\mathbf a), \quad 
\text{and} \quad
F_{T_m'}(x) = |\mathbf b| + (\one+x\mathbf b)^{\mathsf T} M_{\mathbf b}(x)^{-1} (\one+x\mathbf b).
\] 
Since $|\mathbf a|=|\mathbf b|=m+4$, direct substitution gives
\[F_{T_m}(x) = F_{T_m'}(x) = \frac{N_m(x)}{\Delta_m(x)},\] where
\[
\begin{aligned}
    N_m(x) ={}& m+11 +(2m+20)x -(16m+68)x^2 \\
    &-(26m+118)x^3 +(62m+126)x^4 +(86m+204)x^5 \\
    &-(81m+69)x^6 -(94m+98)x^7 +(27m+11)x^8 \\
    &+(20m+12)x^9 -2mx^{10}.
\end{aligned}
\]
Thus $F_{T_m}(x)=F_{T_m'}(x)$, which is equivalent to $W_k(T_m)=W_k(T_m')$ for all $k\ge 0$. It remains to show that the two trees are non-isomorphic and have matching number 5.

For $T_m$, the five edges $v_1\ell_1,  v_2v_3,  v_4\ell_4,  v_5\ell_5,  v_6v_7$ form a matching, where $\ell_i$ denotes a pendant vertex adjacent to $v_i$. Hence $\nu(T_m)\ge 5$. On the other hand, $\{v_1,v_3,v_4,v_5,v_7\}$ is a vertex cover of $T_m$, so $\nu(T_m)\le 5$. Therefore $\nu(T_m)=5$.

Similarly, for $T_m'$, the five edges $v_1\ell_1,  v_2v_3,  v_4\ell_4,  v_6\ell_6,  v_7\ell_7$ form a matching, while $\{v_1,v_3,v_4,v_6,v_7\}$ is a vertex cover. Therefore $\nu(T_m')=5$.

Finally, the branching vertices distinguish the two trees. Here a branching vertex means a vertex of degree at least 3. In $T_m$, the branching vertices are $v_1, v_4, v_5$, and their pairwise distances form the multiset $\{1,3,4\}$. In $T_m'$, the branching vertices are $v_4, v_6, v_7$, and their pairwise distances form the multiset $\{1,2,3\}$. Since this multiset is an isomorphism invariant, we have $T_m\not\cong T_m'$. This completes the proof.
\end{proof}

\begin{corollary}
The bound $\nu\le 4$ in Theorem~\ref{thm:main-rigidity} is sharp.
\end{corollary}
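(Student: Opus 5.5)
The corollary follows by combining Theorem~\ref{thm:main-rigidity} with Theorem~\ref{thm:sharpness}; the only work is to make precise what ``sharp'' means. I read it as follows: the conclusion of Theorem~\ref{thm:main-rigidity} holds for every pair of trees with matching number at most \(4\), but it fails for some pair once the hypothesis is relaxed to matching number at most \(5\). The positive half is exactly Theorem~\ref{thm:main-rigidity}, so nothing further is needed there.

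For the negative half I will take the pair \(T_m=P_7(2,0,0,1,m,0,1)\) and \(T_m'=P_7(1,0,0,1,0,m,2)\) constructed in Section~\ref{sec:sharpness-section}, with any fixed \(m\ge1\); for example \(m=6\) gives the original collision on \(17\) vertices. Theorem~\ref{thm:sharpness} gives four facts about this pair.
\begin{itemize}
\item \(\nu(T_m)=\nu(T_m')=5\).
\item \(\operatorname{Spec}(T_m)=\operatorname{Spec}(T_m')\), since both trees have reciprocal characteristic polynomial \(\Delta_m\).
\item \(W_k(T_m)=W_k(T_m')\) for all \(k\ge0\), since both trees have generating function \(N_m/\Delta_m\).
\item \(T_m\not\cong T_m'\), since the multisets of distances between branching vertices differ.
\end{itemize}
So the pair satisfies every hypothesis of Theorem~\ref{thm:main-rigidity} except the matching-number bound, which it exceeds by exactly one, and the conclusion fails. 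Hence the constant \(4\) cannot be replaced by \(5\), or by anything larger.

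I would add two remarks. First, because the family is infinite in \(m\), the failure at \(\nu=5\) is not an isolated small-order accident, and it also survives any bound on the order of the trees that grows with \(m\). Second, the finite version in Corollary~\ref{cor:finite-main} is also sharp in the matching-number parameter, because the counterexamples agree on the full walk sequence and in particular on \(W_3,\dots,W_8\).

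I expect no real obstacle. All the computational content, namely the tridiagonal determinant and the numerator identity, is already packaged in Theorem~\ref{thm:sharpness}. The one point to be careful about is logical: sharpness is a statement about the matching-number threshold, and by Corollary~\ref{cor:cospectral-matchings} cospectral trees automatically share their matching number, so no mixed case with \(\nu(T)\le4<\nu(T')\) needs to be considered.
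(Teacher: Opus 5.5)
Your proposal is correct and matches the paper's argument. The corollary is obtained by applying Theorem~\ref{thm:sharpness} to the pair $T_m,T_m'$, which has $\nu=5$ and satisfies every other hypothesis of Theorem~\ref{thm:main-rigidity}, yet consists of non-isomorphic trees. Your side remarks, that the finite version in Corollary~\ref{cor:finite-main} is also sharp and that cospectrality forces equal matching numbers, are accurate but not needed for the statement.
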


\begin{proof}
Theorem~\ref{thm:sharpness} gives an infinite family of non-isomorphic pairs of trees with matching number 5 having the same spectrum and the same total-walk sequence.
\end{proof}

% =========================================================
% Acknowledgements
% =========================================================

\section*{Acknowledgements}

I thank Wenjian Zhang for his assistance with the programming and computational aspects of this work. This work was supported by the High-level Talent Research Start-up Fund of West Anhui University (Project No. WGKQ2021072).

% =========================================================
% Declaration of generative AI and AI-assisted technologies in the writing process
% =========================================================
\section*{Declaration of Use of AI Tools}
During this research, the author used ChatGPT (OpenAI) to explore proof strategies and assist with Python code development. Python programs were used to conduct systematic searches over the tree families considered; these searches identified the collision pairs for matching number five, which were subsequently verified and proved mathematically by the author. AI-assisted tools were also used for formula checking, proofreading, and language editing. All AI-generated outputs were independently verified by the author, who assumes full responsibility for the content of this work.

\appendix
\counterwithin{table}{section}
\counterwithin{equation}{section}

\section{Recovery certificates for the canonical families}\label{app:recovery-certificates}

\renewcommand{\theequation}{S\arabic{equation}}
\setcounter{equation}{0}

This appendix records the explicit recovery identities used in the proof of Proposition~\ref{prop:internal-reconstruction}. For the positive families, we use the data-determined quantities
\begin{equation}
 E:=\eta_8,
    \qquad
    K:=-\frac{3\eta_7+6m_4+2\eta_8}{2},
    \qquad
    Q:=-\eta_7-4m_4.
    \label{eq:EKQ-definition}
\end{equation}
   
All parameters corresponding to displayed support vertices are positive, except for the optional parameters explicitly allowed to vanish in the combined rows.

\subsection{Families with \texorpdfstring{\(\eta_8=0\)}{eta8 = 0}} \label{app:zero-recovery}

The recovery certificates for the 8 families satisfying \(\eta_8=0\) are given in Table~\ref{tab:zero-recovery}.

\begingroup
\small
\setlength{\tabcolsep}{4pt}
\renewcommand{\arraystretch}{1.18}

\clearpage
\begin{longtable}{@{}L{0.205\textwidth}L{0.755\textwidth}@{}}
\caption{Internal recovery certificates when \(\eta_8=0\).}
\label{tab:zero-recovery}\\

\toprule
Family & Recovery certificate\\
\midrule
\endfirsthead

\multicolumn{2}{c}{%
\tablename\ \thetable\ continued from the previous page%
}\\
\toprule
Family & Recovery certificate\\
\midrule
\endhead

\midrule
\multicolumn{2}{r}{Continued on the next page}\\
\endfoot

\bottomrule
\endlastfoot

\(\mathscr F_{3,234}\)
&
For pendant parameters \(a,b,c\), recover their elementary symmetric functions 
\(s:=a+b+c=n-7, p:=ab+ac+bc=m_2-5s-9, r:=abc=m_3-4p-5s-4.\)

Then \(\{a,b,c\}\) is the root multiset of $z^3-sz^2+pz-r$.
\rule[-1.5mm]{0pt}{1.5mm}\\

\(\mathscr F_{5,234}\)
&
Write $T=R_5(0,x,y,z,0)\in\mathscr F_{5,234},\quad x,y,z>0$.

Recover
\(
s:=x+y+z=n-5,\quad
p:=xy+xz+yz=m_2-3s-2,\quad
r:=xyz=m_4.
\)
Then $x=r+2p+s-m_3,\quad y+z=s-x,\quad yz=r/x$,

so \(x\) and the unordered pair \(\{y,z\}\) are recovered.
\rule[-1.5mm]{0pt}{1.5mm}\\

\(\mathscr F_{6,234}\)
&
Write $T=R_6(0,x,y,z,0,0)\in\mathscr F_{6,234}, \quad x,y,z>0$, and set
\(
S:=n-6=x+y+z,
\qquad
P:=m_2-4S-5=xy+xz+yz,
\)
\(H:=m_3-3P-2S-1=xyz+x=x(yz+1).\)

Then the distinguished parameter \(x\) is the unique positive integral solution of
\(\displaystyle
x^2-(S-2)x+(m_4-2H)=0,
\qquad
P=x(S-x)+\frac{H}{x}-1.
\)
Finally, $y+z=S-x,\quad yz=\frac{H}{x}-1$, so the unordered pair \(\{y,z\}\) is recovered.
\rule[-1.5mm]{0pt}{1.5mm}\\

\(\mathscr F_{7,34}\)
&
Let \(x,y>0\) be the pendant multiplicities at \(v_3,v_4\). Since the reflection of \(R_7=P_6\) interchanges them, it suffices to recover \(\{x,y\}\). 

From $x+y=n-6,\quad xy=m_4$, the pair \(\{x,y\}\) is the root multiset of $t^2-(n-6)t+m_4$.
\rule[-1.5mm]{0pt}{1.5mm}\\

\(\mathscr F_{7,345}\)
&
Write $T=R_7(0,0,x,y,z,0)\in\mathscr F_{7,345}, \quad x,y,z>0$.

Define
\(
    s:=n-6=x+y+z,\qquad
    \rho:=\eta_6+4m_4=xyz,
\)
\(
    A:=m_4-2\rho=x(y+z),\qquad
    C:=m_2-A-4s-6=yz-z,
\)
\(K_0:=\eta_4-3\rho-8A-8s-5C-4=xz.\)

Then
\(\displaystyle
    xy=A-K_0,\qquad
    z=\frac{\rho}{A-K_0},\qquad
    x=\frac{K_0}{z},\qquad
    y=\frac{A-K_0}{x}.
\)

Thus \((x,y,z)\) is uniquely recovered.
\rule[-1.5mm]{0pt}{1.5mm}\\

\(\mathscr F_{8,134}\)
&
Write $T=R_8(x,0,y,z,0)\in\mathscr F_{8,134}, \quad x,y,z>0$.

Define
\(\displaystyle
P:=-\eta_6-3m_4=xy, \quad z=\frac{m_4}{P}, \quad S:=n-5-z=x+y.
\)

Then $x=P+zS+3S+3z+3-m_2, \quad y=S-x$. Thus \((x,y,z)\) is uniquely recovered.
\rule[-1.5mm]{0pt}{1.5mm}\\

\(\mathscr F_{9,1234}\)
&
Let \(a\) be the pendant multiplicity at the centre of
\(R_9=K_{1,3}\), and let \(b,c,d\) be those at the three leaves.

Put \quad \(s:=b+c+d,\quad p:=bc+bd+cd,\quad q:=bcd\).

Then
\(\displaystyle
q=\frac{-\eta_6-4m_4}{4},\quad a=\frac{m_4}{q}, \quad s=n-4-a, \quad p=m_2-(a+2)s.
\)

Thus \(\{b,c,d\}\) is the root multiset of $t^3-st^2+pt-q$, and the parameters are recovered up to permutation of the three
leaves.
\rule[-1.5mm]{0pt}{1.5mm}\\

\(\mathscr F_{10,1234}\)
&
Let \(a,b\) be the pendant multiplicities at the two internal vertices of \(R_{10}=P_4\), and \(c,d\) those at the corresponding endvertices.  

Put \quad
\(
u:=a+b,\quad v:=ab,\quad r:=c+d,\quad
w:=cd,\quad \ell:=ac+bd.
\)

Define
\(\displaystyle
\Pi:=-\eta_6-4m_4=uw,\qquad
\Omega:=\frac{\eta_5}{2}-m_3-\Pi=w.
\)

Then
\(\displaystyle
w=\Omega,\quad
u=\frac{\Pi}{\Omega},\quad
v=\frac{m_4}{\Omega},\quad
r=n-4-u, \quad
\ell=m_3-vr-\Pi-\Omega.
\)

Thus \(\{a,b\}\) and \(\{c,d\}\) are the root multisets of \(t^2-ut+v,\quad t^2-rt+w.\)

The known value \(\ell=ac+bd\) determines their pairing up to the reflection of the \(P_4\)-core.
\\

\end{longtable}
\endgroup

% ============================================================
% Table A.2: positive-family recovery for R_1--R_4
% ============================================================

\subsection{Positive families with cores
\texorpdfstring{\(R_1,\ldots,R_4\)}{R1,...,R4}}\label{app:positive-recovery-I}

Table~\ref{tab:positive-recovery-I} records the internal recovery certificates for the positive canonical families whose cores are \(R_1,\ldots,R_4\). Throughout this table,
\[
    E:=\eta_8,
    \qquad
    K:=-\frac{3\eta_7+6m_4+2\eta_8}{2}.
\]

\begingroup
\small
\setlength{\tabcolsep}{4pt}
\renewcommand{\arraystretch}{1.16}
\setlength{\LTleft}{0pt}
\setlength{\LTright}{0pt}
\setlength{\LTpre}{4pt}
\setlength{\LTpost}{4pt}

\begin{longtable}{@{}L{0.22\textwidth}L{0.74\textwidth}@{}}

\caption{Positive-family recovery certificates for the cores
\(R_1,R_2,R_3,R_4\).}
\label{tab:positive-recovery-I}\\

\toprule
Family or families & Recovery certificate\\
\midrule
\endfirsthead

\multicolumn{2}{c}{%
\tablename\ \thetable\ continued from the previous page%
}\\
\toprule
Family or families & Recovery certificate\\
\midrule
\endhead

\midrule
\multicolumn{2}{r}{Continued on the next page}\\
\endfoot

\bottomrule
\endlastfoot

% ============================================================
% R_1, support 1234
% ============================================================

\(\mathscr F_{1,1234}\)
&
Let \(a,b,c,d>0\) be the pendant multiplicities at the four leaves of \(R_1=K_{1,4}\), and put
\(
s:=a+b+c+d,\qquad
p:=ab+ac+ad+bc+bd+cd,
\)
\(
q:=abc+abd+acd+bcd,\qquad
r:=abcd.
\)

Then, with \(E:=\eta_8\),
\(\displaystyle
s=n-5,\qquad
r=\frac{E}{9},\qquad
q=m_4-r,\qquad
p=\frac{m_3-q}{2}.
\)
Thus \(\{a,b,c,d\}\) is the root multiset of $t^4-st^3+pt^2-qt+r$.

Every permutation of the four parameters is induced by an automorphism of \(R_1=K_{1,4}\).
\rule[-1.5mm]{0pt}{1.5mm}\\

% ============================================================
% R_2, supports 234 and 1234
% ============================================================

\(\mathscr F_{2,234}\),\newline
\(\mathscr F_{2,1234}\)
&
For the two families \(\mathscr F_{2,234}\) and \(\mathscr F_{2,1234}\), write $T=R_2(a,b,c,d)$, where \(a,b,c,d\) are the pendant multiplicities at \(v_1,v_2,v_3,v_4\), respectively, with \(a\ge0, b,c,d>0.\) Here \(a=0\) corresponds to support \(234\), whereas \(a>0\) corresponds to support \(1234\). The vertices \(v_3,v_4\) are interchanged by an automorphism of \(R_2\), so \(c,d\) are interchangeable. 

Put
\(\displaystyle
    u:=c+d,
    \qquad
    v:=cd,
    \qquad
    t:=a+b,
    \qquad
    S:=n-6.
\)

The data first recover
\(\displaystyle
    \delta:=a-b
      =
      \frac{\eta_3+4m_2-4S-6}{2},
\)
and
\(\displaystyle
    P:=ut
      =
      \frac{
        8S-4\delta+20-(\eta_5-6m_3+4m_2)
      }{2}.
\)

Thus the unordered pair \(\{t,u\}\) is the root multiset of $z^2-Sz+P$.

\\
&
The remaining coefficients determine the unique orientation $t=a+b, u=c+d$.

Once this orientation is known,
\(\displaystyle
    a=\frac{t+\delta}{2},
    b=\frac{t-\delta}{2}.
\)

Moreover, \(v=m_2-ab-ut-3a-4b-4u-5.\) Hence the interchangeable pair \(\{c,d\}\) is the root multiset of $z^2-uz+v$.

The recovered value of \(a\) distinguishes the two support orbits.
\rule[-1.5mm]{0pt}{1.5mm}\\

% ============================================================
% R_3, support 1234
% ============================================================

\(\mathscr F_{3,1234}\)
&
Let \(a>0\) be the pendant multiplicity at the central vertex \(v_1\) of \(R_3\), and let \(b,c,d>0\) be those at the three equivalent endvertices \(v_2,v_3,v_4\). 

Put
\(\displaystyle
    s:=b+c+d,\qquad
    p:=bc+bd+cd,\qquad
    r:=bcd.
\)

With
\(\displaystyle
    E:=\eta_8,\quad
    K:=-\frac{3\eta_7+6m_4+2\eta_8}{2},
\)
the central parameter is recovered from \(\displaystyle a=\frac{6E}{K-2E}.\)

Then \(s=n-a-7, p=m_2-as-3a-5s-9,\) and
\(\displaystyle
    r=\frac{E/a-2p-s}{3}.
\)

Hence \(\{b,c,d\}\) is the root multiset of $t^3-st^2+pt-r$.

Since every permutation of \(b,c,d\) is induced by an automorphism of \(R_3\), the pendant parameters are uniquely recovered up to a core automorphism.
\rule[-1.5mm]{0pt}{1.5mm}\\

% ============================================================
% R_4, supports 34, 134 and 1234
% ============================================================

\(\mathscr F_{4,34}\),\newline
\(\mathscr F_{4,134}\),\newline
\(\mathscr F_{4,1234}\)
&
For the three families \(\mathscr F_{4,34}\), \(\mathscr F_{4,134}\), and \(\mathscr F_{4,1234}\), write \(T=R_4(a,b,c,d),\) where \(a,b\ge0\) and \(c,d>0\) are the pendant multiplicities at the four relevant core vertices, ordered according to the chosen support representative. 

Put
\( x:=a+b, \qquad p:=ab, \qquad y:=c+d,\)
\(q:=cd, \qquad \ell:=ac+bd, \qquad S:=n-7.\)
       
Two zero entries in \(\{a,b\}\) give support \(34\), one zero entry gives support \(134\), and no zero entries give support \(1234\).
\\

&
The data first give
\(\displaystyle
    x
      =
      \frac{\eta_3+4m_2-2S-8}{2},
    \quad
    y=S-x.
\)
They also give \(A:=p+q=m_2-xy-4x-5y-10.\)

Define \( B:=m_3-3xy-4x-6y-4.\) Then \(B=p(y+2)+q(x+4).\)
\\

&
Suppose first that $y-x-2\neq0$. Since \(q=A-p\), the preceding equation gives
\(\displaystyle
    p
      =
      \frac{B-A(x+4)}{y-x-2},
    q=A-p.
\)
The mixed term is then recovered from
\(\displaystyle
    \ell
      =
      \frac{
        \eta_5
        -6py-8p
        -6xq-12xy
        -14x-20q-26y-16
      }{4}.
\)
\\

&
It remains to consider the exceptional case $y=x+2$.

In this case,
\(\displaystyle
    \ell
      =
      \frac{
        \eta_5
        -(6x+20)A
        -12xy-14x-26y-16
      }{4}.
\)

Put \(D:=E-\ell-(2x+1)A.\) Then \(p\) is an integral root of \(3z^2-(3A+4)z+D=0.\) This equation has at most one integral root, because the sum of its two roots is
\(\displaystyle
    A+\frac{4}{3}.
\)
Thus \(p\) is uniquely determined, and $q=A-p$.

\\

&
The unordered pairs \(\{a,b\}\) and \(\{c,d\}\) are now the root multisets of $z^2-xz+p$ and $z^2-yz+q$, respectively.

There are at most two ways to pair these two multisets. They give the mixed terms
\(
    ac+bd
    \quad\text{and}\quad
    ad+bc,
\)
whose difference is $(a-b)(c-d)$.

Hence the known value \(\ell\) determines the pairing unless one of the two pairs has equal entries. In that case, the two pairings are already related by the reflection of the path \(R_4=P_7\).
\\

\end{longtable}
\endgroup

% ============================================================
% Table A.3: positive-family recovery for R_5--R_8
% ============================================================

\subsection{Positive families with cores
\texorpdfstring{\(R_5,\ldots,R_8\)}{R5,...,R8}}
\label{app:positive-recovery-II}

Table~\ref{tab:positive-recovery-II} records the remaining internal recovery certificates for the positive canonical families. Throughout this table,
\[
    E:=\eta_8,
    \qquad
    K:=-\frac{3\eta_7+6m_4+2\eta_8}{2}.
\]

\begingroup
\small
\setlength{\tabcolsep}{4pt}
\renewcommand{\arraystretch}{1.16}
\setlength{\LTleft}{0pt}
\setlength{\LTright}{0pt}
\setlength{\LTpre}{4pt}
\setlength{\LTpost}{4pt}

\begin{longtable}{@{}L{0.22\textwidth}L{0.74\textwidth}@{}}

\caption{Positive-family recovery certificates for the cores
\(R_5,R_6,R_7,R_8\).}
\label{tab:positive-recovery-II}\\

\toprule
Family or families & Recovery certificate\\
\midrule
\endfirsthead

\multicolumn{2}{c}{%
\tablename\ \thetable\ continued from the previous page%
}\\
\toprule
Family or families & Recovery certificate\\
\midrule
\endhead

\midrule
\multicolumn{2}{r}{Continued on the next page}\\
\endfoot

\bottomrule
\endlastfoot

% ============================================================
% R_5, support 1234
% ============================================================

\(\mathscr F_{5,1234}\)
&
Let \(T=R_5(a,b,c,d)\in\mathscr F_{5,1234},\quad a,b,c,d>0,\) where \(c,d\) are interchangeable under a core automorphism.

Put $R:=E/4=abcd$. The distinguished parameter \(a\) is recovered from
\(\displaystyle
    a=\frac{3E}{2K-E}.
\)

Next set \(S:=n-5-a=b+c+d,\quad U:=c+d,\quad V:=cd.\)

From \(m_4\),
\(\displaystyle
    B:=bU
      =\frac{m_4-R-R/a}{a}.
\)
Then \(V=m_2-aS-B-2a-3S-2.\) Since $R=abV$, we recover
\(\displaystyle
    b=\frac{R}{aV},
    U=S-b.
\)
Hence the interchangeable pair \(\{c,d\}\) is the root multiset of $z^2-Uz+V$.

Thus \(a\) and \(b\) are uniquely recovered, while \(\{c,d\}\) is recovered up to the corresponding core automorphism.
\rule[-1.5mm]{0pt}{1.5mm}\\

% ============================================================
% R_5, support 2345
% ============================================================

\(\mathscr F_{5,2345}\)
&
Let \(a,b,c,d>0\) be the pendant parameters, with \(b,c\) interchangeable under a core automorphism. The distinguished terminal parameter is recovered from
\(\displaystyle
    d=\frac{6E}{K-2E}.
\)

Put \(S:=n-5-d=a+b+c,\quad U:=b+c,\quad V:=bc.\)

Since
\(\displaystyle
    m_4=E+\frac{E}{d}+dV,
\)
the product \(V\) is recovered from
\(\displaystyle
    V=\frac{m_4-E-E/d}{d}.
\)
Moreover, $E=aVd$, so
\(\displaystyle
    a=\frac{E}{Vd},
    U=S-a.
\)
Hence the interchangeable pair \(\{b,c\}\) is the root multiset of \(z^2-Uz+V.\) Thus \(a\) and \(d\) are uniquely recovered, while \(\{b,c\}\) is recovered up to the corresponding core automorphism.
\rule[-1.5mm]{0pt}{1.5mm}\\

% ============================================================
% R_6, support 1234
% ============================================================

\(\mathscr F_{6,1234}\)
&
Let \(a,b,c,d>0\) be the pendant parameters, with \(c,d\) interchangeable under a core automorphism. 

Put $u:=c+d, v:=cd$. The distinguished parameter \(a\) is recovered from 
\(\displaystyle
a=\frac{6E}{K-2E}.
\)

Next set \(S:=n-6-a=b+u=b+c+d.\)

Define
\(\displaystyle
    B:=\frac{2m_4-(a+2)E/a}{a}. \quad \text{Then} \quad B=b(u+2).
\)

Now put \(C:=2\bigl(m_2-aS-2a-B-4S-5\bigr)+S.\) In terms of the pendant parameters, \( C=2v+u-3b=E/ab-3b.\) Consequently, \(b\) is a positive root of
\(3z^2+Cz-E/a=0.\)
Since the product of the two roots is $-E/3a<0$, this quadratic has exactly one positive root. Hence \(b\) is uniquely recovered.

Finally,
\(\displaystyle
    u=S-b,
    v=\frac{E/(ab)-u}{2}.
\)
Thus the interchangeable pair \(\{c,d\}\) is the root multiset of $z^2-uz+v$.

Hence \(a\) and \(b\) are uniquely recovered, while \(\{c,d\}\) is recovered up to the corresponding core automorphism.
\rule[-1.5mm]{0pt}{1.5mm}\\

% ============================================================
% R_7, supports 134 and 1234
% ============================================================

\(\mathscr F_{7,134}\),\newline
\(\mathscr F_{7,1234}\)
&
For the two families
\(\mathscr F_{7,134}\) and \(\mathscr F_{7,1234}\), allow $a,b\ge0,\quad a+b>0,\quad c,d>0$, where the parameters are attached to \(v_1,v_2,v_3,v_4\),
respectively.  The reflection of \(R_7=P_6\) acts by \((a,b,c,d)\longmapsto(b,a,d,c).\)

Put
\(
    x:=a+b,\qquad p:=ab,\qquad
    y:=c+d,\qquad q:=cd,
\)
\(
    \ell:=ac+bd,\qquad S:=n-6=x+y.
\)

For the support orbit represented by \(134\), exactly one of \(a,b\) is zero, whereas \(a,b>0\) for support \(1234\).

The data first recover
\(\displaystyle
    R:=\frac{\eta_3+4m_2-2S-6}{2}=p+x.
\)
The value \(x=a+b\) is the unique admissible candidate determined by the candidate system in Appendix~\ref{app:R7-candidate}.  

Once \(x\) is known,
\(
    y=S-x,\quad
    p=R-x,\quad
    q=m_2-p-xy-3x-4y-6,
\)
and \(\ell=pq+py+p+xq+xy+q-m_4.\)

Hence \(\{a,b\}\) and \(\{c,d\}\) are the root multisets of $z^2-xz+p$, and $z^2-yz+q$, respectively. Their two possible pairings give $ac+bd$ and $ad+bc$,
whose difference is $(a-b)(c-d)$.

Thus the known value \(\ell=ac+bd\) determines the pairing unless one of the two pairs has equal entries; in that case the two pairings are related by the reflection of \(R_7=P_6\).

Finally, the recovered pair \(\{a,b\}\) determines the support orbit: one zero entry gives \(134\), while no zero entry gives \(1234\).
\rule[-1.5mm]{0pt}{1.5mm}\\

% ============================================================
% R_7, support 1346
% ============================================================

\(\mathscr F_{7,1346}\)
&
Let \(a,b,c,d>0\) be the pendant parameters attached to
\(v_1,v_3,v_4,v_6\), respectively, and put \(S:=n-6=a+b+c+d.\)

Define
\(\displaystyle
    R:=\frac{\eta_3+4m_2}{2}-S-3=a+cd,
\)
\(\displaystyle
    X:=\frac{-\eta_7-4m_4}{2}=abc(d+1),
    \qquad
    Y:=E-X=acd(b+1),
\)
\(\displaystyle
    Z:=-\frac{3m_3-\eta_4-m_2+3S+R+7}{2}
      =a(b+d),
\)
\(\displaystyle
    J:=2m_2-m_3-3m_4-\eta_6-5S-2Z-Y-11
      =ab(d+1).
\)

Since \(J>0\), \(\displaystyle c=\frac{X}{J}\) is uniquely recovered.

Next put $U:=S-c=a+b+d$. Since $Z=a(b+d)$, the two numbers \(a\) and \(b+d\) form the root multiset of $z^2-Uz+Z$.

For a candidate root \(z\) for \(a\), define
\(\displaystyle
    h_z:=U-z,\quad
    d_z:=\frac{R-z}{c},\quad
    b_z:=h_z-d_z,
\)
and
\(
    J_z:=z b_z(d_z+1),\quad
    Y_z:=zc\,d_z(b_z+1).
\)
The correct root is the unique candidate satisfying $J_z=J,\quad Y_z=Y$.

Once it is selected,
\(\displaystyle
    a=z,\quad
    d=\frac{R-a}{c},\quad
    b=U-a-d.
\)
Thus all four pendant parameters are uniquely recovered in the chosen support representative \(1346\).
\rule[-1.5mm]{0pt}{1.5mm}\\

% ============================================================
% R_8, support 1234
% ============================================================

\(\mathscr F_{8,1234}\)
&
Let \(a,b,c,d>0\) be the pendant parameters. The distinguished parameter \(b\) is recovered from
\(\displaystyle
    b=\frac{3E}{K-2E}.
\)

Put \(P:=ac.\) Since \(m_4=E+E/b+bP,\) we obtain
\(\displaystyle
    P=\frac{m_4-E-E/b}{b}.
\)
Moreover, since $E=Pbd$, the parameter \(d\) is recovered from $d=E/Pb$.

Now set \(S:=n-5-b=a+c+d,\quad X:=S-d=a+c.\)
The identity
\(\displaystyle
    m_2=(b+d+3)X+P+bd+2b+3d+3-a
\)
then gives $a=(b+d+3)X+P+bd+2b+3d+3-m_2$, and finally \(c=X-a.\) 

Thus all four pendant parameters \(a,b,c,d\) are uniquely recovered.
\rule[-1.5mm]{0pt}{1.5mm}\\

% ============================================================
% R_8, support 1345
% ============================================================

\(\mathscr F_{8,1345}\)
&
Let \(a,b,c,d>0\) be the pendant parameters attached to \(v_1,v_3,v_4,v_5\), respectively. The reflection of the path \(R_8\) simultaneously interchanges
\(\displaystyle
    (a,b)\longleftrightarrow(d,c).
\)

Put \(u:=a+d,\quad v:=ad,\quad r:=b+c,\quad w:=bc,\) and \(\ell:=ab+cd.\)

Define \(P:=m_4-E=uw,\quad A:=-\eta_6-4E=3P+vr+\ell,\) and
\(
    S:=n-5=u+r.
\)

Further, put \(\displaystyle C:=m_3-A+2P-m_2+2S+3=w-v =w-\frac{E}{w}.\)

Hence \(w\) is the unique positive root of $z^2-Cz-E=0$, since the product of the two roots is \(-E<0\).

Once \(w\) is known, recover
\(\displaystyle
    u=\frac{P}{w},\quad
    v=\frac{E}{w},\quad
    r=S-u,\quad
    \ell=A-3P-vr.
\)

Thus the unordered pairs \(\{a,d\}\) and \(\{b,c\}\) are the root multisets of

\(
    z^2-uz+v
    \quad\text{and}\quad
    z^2-rz+w,
\)
respectively.

There are at most two ways to pair these two unordered pairs. They give the mixed terms
\(
    ab+cd
    \quad\text{and}\quad
    ac+bd,
\)
whose difference is $(a-d)(b-c)$.

Therefore the known value \(\ell=ab+cd\) determines the pairing unless one of the two pairs has equal entries.  In that exceptional case the two pairings are already related by the reflection of \(R_8\). Hence the pendant parameters are recovered up to the core reflection.

\\

\end{longtable}
\endgroup

\subsection{Candidate system for the combined \(R_7\) row}\label{app:R7-candidate}
This subsection supplies the candidate test used in the recovery certificate for \(\mathscr F_{7,134}\) and \(\mathscr F_{7,1234}\).
Recall that
\(
    x=a+b, p=ab, y=c+d, q=cd,
\)
and that the data determine \(S=x+y,\quad R=p+x.\)

For a candidate \(z\) for \(x\), put
\(
    y_z:=S-z,\quad
    p_z:=R-z,\quad
    q_z:=m_2-p_z-zy_z-3z-4y_z-6,
\)
and define
\begin{align}
    T_z&:=p_zy_z+2p_z+zq_z+2zy_z+2z+3q_z+3y_z+1,
    \notag\\
    U_z&:=2p_zq_z+p_zy_z+zq_z.
    \label{eq:R7-candidate-system}
\end{align}
The true value \(z=x\) satisfies
\(
    T_z=m_3, U_z=E.
\)
Moreover,
\begin{equation}\label{eq:R7-candidate-quadratic}
    T_z+U_z
      =
      (2R+3)z^2
      +(2R+3)(1-S)z
      +C_0,
\end{equation}
where $C_0=(2R+3)m_2-2R^2-6RS-13R-9S-17$.

% =========================================================
% Bibliography
% =========================================================

\end{document}